\newcommand{\blind}{1}
\newcommand{\indep}{\rotatebox[origin=c]{90}{$\models$}}
\newtheorem{theorem}{Theorem}
\newtheorem{condition}{Condition}
\newtheorem{proposition}{Proposition}
\newtheorem{corollary}{Corollary}
\newtheorem{lemma}{Lemma}
\newcommand{\Var}{\textup{Var}}
\newcommand{\E}{\mathbb{E}}
\newcommand{\R}{\mathds{R}}
\newcommand{\N}{\mathcal{N}}
\newcommand{\Pprob}{\mathbb{P}}
\newcommand{\PSample}{P}
\newcommand{\QSample}{Q}
\newcommand{\PEmpirical}{\mathbb{E}_n}
\newcommand{\PBoot}{\mathbb{E}_n}
\renewcommand{\L}{\mathcal{L}}
\newcommand{\F}{\mathcal{F}}
\newcommand{\X}{\mathcal{X}}
\newcommand{\I}{\mathcal{I}}
\renewcommand{\d}{\textup{d}}
\newcommand{\T}{\textup{T}}
\newcommand{\C}{\textup{C}}
\DeclareMathOperator*{\argmin}{argmin}
\newenvironment{manualtheorem}[1]{%
  \manualtheoreminner
}{\endmanualtheoreminner}
\newenvironment{manualproposition}[1]{%
  \manualpropositioninner
}{\endmanualpropositioninner}
\newenvironment{manualcondition}[1]{%
  \manualconditioninner
}{\endmanualconditioninner}
\newenvironment{manualassumption}[1]{%
  \manualassumptioninner
}{\endmanualassumptioninner}
\date{}
\begin{document}



\if1\blind
{
  \title{\Large \textbf{Sharp Sensitivity Analysis for Inverse Propensity Weighting via Quantile Balancing}
  \footnote{This material is based upon work supported by the National Science Foundation Graduate Research Fellowship Program under Grant No. DGE-2039656. Any opinions, findings, and conclusions or recommendations expressed in this material are those of the authors and do not necessarily reflect the views of the National Science Foundation. We are grateful to the associate editor and several anonymous referees for careful feedback. We are also grateful for comments from Guillaume Basse, Bo Honor{\'e}, Nathan Kallus, Michal Koles\'{a}r, David Lee, Xinran Li, Ulrich M{\"u}ller, Karl Schulze, Dylan Small, Angela Zhou, Qingyuan Zhao, and seminar participants at Berkeley, Cambridge, and Princeton.}}
  \author{Jacob Dorn\\ Department of Economics \\ Princeton University \and Kevin Guo \\ Department of Statistics \\ Stanford University}
  \maketitle
} \fi

\if0\blind
{
  \bigskip
  \bigskip
  \bigskip
  \begin{center}
    {\LARGE\bf Sharp Sensitivity Analysis for Inverse Propensity Weighting via Quantile Balancing}
\end{center}
  \medskip
} \fi

\bigskip
\begin{abstract}
Inverse propensity weighting (IPW) is a popular method for estimating treatment effects from observational data. However, its correctness relies on the untestable (and frequently implausible) assumption that all confounders have been measured. This paper introduces a robust sensitivity analysis for IPW that estimates the range of treatment effects compatible with a given amount of unobserved confounding. The estimated range converges to the narrowest possible interval (under the given assumptions) that must contain the true treatment effect. Our proposal is a refinement of the influential sensitivity analysis by Zhao, Small, and Bhattacharya (2019), which we show gives bounds that are too wide even asymptotically. This analysis is based on new partial identification results for Tan (2006)'s marginal sensitivity model.
\end{abstract}

\noindent%
{\it Keywords:} unobserved confounding, partial identification, quantile regression


\section{Introduction}

Estimating treatment effects from observational data is difficult because ``treated" and ``control" samples typically differ on many characteristics besides treatment status.  For example, consumers of nutritional supplements may be wealthier or more health-conscious than those not taking supplements. One popular tool for adjusting for such baseline imbalances is Inverse Propensity Weighting (IPW) \citep{ipw_review, ATTViaIPW}. This technique re-weights treated and untreated samples to be similar along all observed characteristics and then compares outcomes in the weighted samples.  The crucial assumption underlying this approach is that the weighted samples do not systematically differ along important \emph{unobserved} characteristics.  This ``unconfoundedness" assumption is untestable, and often implausible.

This paper studies how much can be learned when unconfoundedness does not hold, but one can bound the plausible degree of unobserved confounding.  In particular, given a ``sensitivity assumption" controlling the degree of selection, we aim to answer two questions:\\
\begin{enumerate}[label=(\arabic*),topsep=0pt,itemsep=-1ex]
\item \emph{Sensitivity analysis}. Can we bound how much the IPW point estimate from our ``primary analysis" might change if unobserved confounding were properly accounted for?  \label{sensitivity_analysis}

\item \emph{Partial identification}.  Can we characterize the most informative bounds that could possibly be obtained from the sensitivity assumption with even an infinite amount of observational data? \label{partial_identification}\\ 
\end{enumerate}

The specific sensitivity assumption used in this paper is the ``marginal sensitivity model" of \cite{tan2006}, which is a variant of Rosenbaum's famous ``$\Gamma$ sensitivity model" \citep{RosenbaumDesign, Rosenbaum1987, rosenbaum2002} that is better suited for IPW analyses.  This sensitivity assumption is quite popular in causal inference;  see \cite{tan2006, confounding_robust_policy_improvement, kallus2020confoundingrobust, kallus_zhou2020, kallus2018interval, zsb2019, causal_rule_ensemble, rosenman2020combining, rosenman2021designing, soriano2021interpretable} for an incomplete list of references.  As we will see, it lends itself to computationally-efficient sensitivity analyses which are simple enough to explain to any practitioner comfortable with IPW.  

Recently, Zhao, Small, and Bhattacharya \cite{zsb2019} (hereafter ZSB) 
introduced an interpretable IPW sensitivity analysis for the marginal sensitivity model that has been largely responsible for the recent resurgence of interest in this sensitivity assumption.  However, they did not answer the partial identification question, leaving open the possibility that more informative bounds could be obtained from the same data and assumptions.  Indeed, there are no existing partial identification results for the marginal sensitivity model that can be used to benchmark a sensitivity analysis.

The first main contribution of this paper is to provide a complete answer to the partial identification question \ref{partial_identification}.  We derive closed-form expressions for the largest and smallest values of the ``usual" estimands (e.g. average treatment effect) compatible with the marginal sensitivity assumption.  These expressions show that the ZSB bounds are essentially always conservative because they ignore an infinite collection of constraints implied by the distribution of observed characteristics. \cite{tan2006} also identified these constraints, but deemed it intractable to incorporate them all in a sensitivity analysis.  In contrast, our partial identification results show that this collection can actually be reduced to a \emph{single} constraint which is easy to incorporate.

Our second main contribution is to introduce a new IPW sensitivity analysis, which we call the \emph{quantile balancing} method.  The method is a simple refinement of the ZSB sensitivity analysis, and has several desireable features:\\

\begin{enumerate}[label=(\roman*),topsep=0pt,itemsep=-1ex]
\item The quantile balancing sensitivity interval is always a subset of the ZSB interval. Outside of knife-edge cases, it is a strict subset. 
\item When the outcome's conditional quantiles can be estimated consistently, the bounds converge to the sharp partial identification region for the average treatment effect (the best possible bounds that can be obtained under the marginal sensitivity model). With some abuse of terminology, we say that quantile balancing is ``sharp."
\item Under standard assumptions for IPW inference, the bounds can be converted into confidence intervals using the same percentile bootstrap scheme proposed by ZSB.
\item When the estimated quantiles are inconsistent, the sensitivity interval is too wide rather than too narrow and the confidence intervals over-cover rather than under-cover.  In other words, our intervals are guaranteed to be valid, regardless of the quality of the additional input we demand. \\
\end{enumerate}

We apply the quantile balancing method in several simulated examples and one real-data application, and find that it can substantially tighten the ZSB bounds when the covariates are good predictors of the outcome.  We also extend our analysis to Augmented IPW (AIPW) estimators. That analysis shows that a slight refinement of the ZSB method is sharp under ``additive-noise" data generating processes, though the refinement makes little difference in practice.  One shortcoming we will mention up-front is that our statistical guarantees assume the outcome is continuously-distributed in order to enable quantile regression. Since our partial identification results also apply to discrete outcomes, we conjecture that the quantile balancing procedure could be modified to give sharp bounds in that setting too.

\subsection{Setting and background}

We consider the Neyman-Rubin potential outcomes model with a binary treatment \citep{neyman, rubin1974}.  We observe i.i.d. samples $(X_i, Y_i, Z_i)$ from a distribution $P$, where $X_i \in \mathcal{X} \subseteq \R^d$ is a vector of covariates, $Z_i \in \{ 0, 1 \}$ is a binary treatment assignment indicator, and $Y_i \in \R$ is a real-valued outcome.  

We assume that each sample $(X_i, Y_i, Z_i)$ is obtained by coarsening a ``full data" sample $(X_i, Y_i(0), Y_i(1), Z_i, U_i)$.  Here, $Y_i(0)$ and $Y_i(1)$ are potential outcomes and $U_i$ is a vector of unobserved confounders of unspecified dimension.  The observed outcome is related to the potential outcomes through the consistency relation $Y_i = Z_i Y_i(1) + (1 - Z_i) Y_i(0)$.

The goal is to use the observed data to draw inferences about a causal estimand $\psi_0$.  For the purposes of exposition, we initially focus on the counterfactual means $\psi_{\textup{T}} = \E[ Y(1)]$ and $\psi_{\textup{C}} = \E[ Y(0)]$, although the examples of most practical interest are the average treatment effect (ATE) and the average treatment effect on the treated (ATT).
\begin{align*}
   \psi_{\textup{ATE}} &= \E[ Y(1) - Y(0)]  \\
   \psi_{\textup{ATT}} &= \E[ Y(1) - Y(0) | Z = 1].
\end{align*}
With minor modification, our identification results can also be applied to more complex estimands, including policy values  \citep{athey2017efficient, confounding_robust_policy_improvement} and weighted average treatment effects.  However, we do not present those extensions in this paper.

Under the unconfoundedness assumption $(Y(0), Y(1)) \, \indep \, Z \mid X$, all of the above quantities can be consistently estimated from the observed data using inverse propensity weighting.  IPW estimators work by reweighting the observed sample by some function of the propensity score $e(x) := \PSample(Z = 1 | X = x)$.  For example, if the estimand of interest is $\psi_{\textup{T}}$, the (stabilized) IPW estimator is given by (\ref{ipw1}):
\begin{align}
\hat{\psi}_{\T} = \frac{\E_n[ YZ/\hat{e}(X)]}{\E_n[ Z/\hat{e}(X)]} \label{ipw1}
\end{align}
Here, $\hat{e}(\cdot)$ is an estimate of the propensity score $e(\cdot)$ and $\E_n[\cdot]$ is shorthand for $\tfrac{1}{n} \sum_{i = 1}^n [\cdot]_i$.  An unstabilized version of $\hat{\psi}_{\T}$ which uses only the numerator of (\ref{ipw1}) is also common.  Related estimators for the other estimands considered will be denoted by $\hat{\psi}_{\textup{C}}, \hat{\psi}_{\text{ATE}}$, and $\hat{\psi}_{\text{ATT}}$.  See the articles by \cite{ipw_review} or \cite{ATTViaIPW} for their exact formulas.

We will assume some conditions which are required for identification and estimation under unconfoundedness: overlap ($0 < e(X) < 1$ almost surely) and one outcome moment ($\E_{\PSample}[|Y|] < \infty$).  However, we will not assume unconfoundedness.

\section{The marginal sensitivity model} \label{section:msm}

The marginal sensitivity model introduced by \cite{tan2006} is a relaxation of unconfoundedness which has been applied in many causal inference problems.  This one-parameter sensitivity assumption allows for the existence of unobserved confounders $U$, but limits the degree of selection bias that can be attributed to these confounders.

\begin{manualassumption}{$\Lambda$}
\label{assumption:msm}
\textup{\textbf{(Marginal sensitivity model)}}\\
There exists a vector of unmeasured confounders $U$ that, if measured, would lead to unconfoundedness:  $(Y(0), Y(1)) \, \indep \, Z \mid (X, U)$.  However, within each stratum of the observed covariates, measuring $U$ can only change the odds of treatment by at most a factor of $\Lambda$, i.e. if we set $e_0(x, u) := \PSample(Z = 1 | X = x, U = u)$, then (\ref{or_bound}) holds with probability one.
\begin{align}
    \Lambda^{-1} \leq \frac{e_0(X, U)/[1 - e_0(X, U)]}{e(X)/[1 - e(X)]} \leq \Lambda  \label{or_bound}
\end{align}
\end{manualassumption}

The statement of the marginal sensitivity model presented in \cite{tan2006} and \cite{zsb2019} uses the potential outcomes $(Y(0), Y(1))$ in place of the unobserved variable $U$.  However, as pointed out by a referee, these assumptions are equivalent.

To avoid confusion between $e_0$ and $e$, we will follow \cite{kallus_zhou2020} and refer to $e_0$ as the ``true propensity score" and $e$ as the ``nominal propensity score."  

Like Rosenbaum's famous ``$\Gamma$ sensitivity model", Assumption \ref{assumption:msm} controls the degree of unobserved confounding with a single parameter.   When $\Lambda = 1$, measuring additional confounders cannot change the odds of treatment at all, i.e. treatment assignment is unconfounded.  As $\Lambda$ increases, stronger forms of confounding are allowed.  For advice on how to choose this parameter, see \cite{hsu_small2013}. For more on the relationship between this and Rosenbaum's model, see \cite{zsb2019} Section 7.1. The marginal sensitivity assumption is ``nonparametric" in the sense that no assumptions are needed about how $e_0$ depends on $u$. Even the dimension of the vector $U$ does not need to be specified.

To see how Assumption \ref{assumption:msm} can be used for sensitivity analysis, begin by considering how an oracle statistician who observed the confounders $U_i$ might estimate $\psi_{\textup{T}}$.  One strategy would be to use the IPW estimator (\ref{oracle}), which is consistent under weak assumptions.
\begin{align}
    \hat{\psi}_{\textup{T}}^* = \frac{\sum_{i = 1}^n Y_i Z_i / e_0(X_i, U_i)}{\sum_{i = 1}^n Z_i / e_0(X_i, U_i)} \label{oracle}.
\end{align}
In reality, $\{ U_i \}_{i \leq n}$ are not observed, but under Assumption \ref{assumption:msm}, it is possible to \emph{bound} the true propensity scores $e_0(X_i, U_i)$.  In particular, the vector $(e_0(X_1, U_1), \cdots, e_0(X_n, U_n))$ must belong to the ZSB constraint set $\mathcal{E}_n(\Lambda)$ defined in (\ref{msm_set}).
\begin{align}
\mathcal{E}_n(\Lambda) = \left\{ \bar{e} \in \R^n \, : \, \Lambda^{-1} \leq \frac{\bar{e}_i/(1 - \bar{e}_i)}{e(X_i)/[1 - e(X_i)]} \leq \Lambda \right\} \label{msm_set}
\end{align}

ZSB proposed bounding the oracle statistician's IPW estimator (\ref{oracle}) with the largest and smallest IPW estimates that can be obtained using putative propensities in $\mathcal{E}_n(\Lambda)$. 
\begin{align}
[\hat{\psi}_{\textup{T,ZSB}}^-, \hat{\psi}_{\textup{T,ZSB}}^+] = \left[ \min_{\bar{e} \in \mathcal{E}_n(\Lambda)} \frac{\sum_{i = 1}^n Y_i Z_i / \bar{e}_i}{\sum_{i = 1}^n Z_i / \bar{e}_i}, \max_{\bar{e} \in \mathcal{E}_n(\Lambda)} \frac{\sum_{i = 1}^n Y_i Z_i / \bar{e}_i}{\sum_{i = 1}^n Z_i / \bar{e}_i} \right]  \label{zsb_bounds}.
\end{align}
Since the interval (\ref{zsb_bounds}) contains the consistent estimator $\hat{\psi}_{\textup{T}}^*$, the distance between the true estimand $\psi_{\textup{T}}$ and the nearest point in the sensitivity interval tends to zero.  ZSB show that this conclusion holds even if the nominal propensity score $e$ is replaced by a suitably consistent estimate $\hat{e}$ in the definition of $\mathcal{E}_n(\Lambda)$, which is important for practical applications as $e$ is typically not known in observational studies.

This simple idea is intuitive enough to explain to any practitioner who is comfortable with IPW and has been extended to estimands other than $\psi_{\textup{T}}$.  ZSB also consider $\psi_{\text{ATE}}$ and $\psi_{\text{ATT}}$.  Related work by \cite{confounding_robust_policy_improvement, kallus2020confoundingrobust, kallus2018interval, causal_rule_ensemble} takes the idea substantially further.  \cite{tan2006} applied a similar idea to a different propensity-score-based estimator and  \cite{AronowLeeInterpretable, MiratrixEtAl, tudballZhaoEtAl2019interval} used similar approaches in survey sampling problems.

\subsection{Sharpness and data-compatibility} \label{section:sharpness}

The aforementioned works do not address the asymptotic optimality of the interval $[\hat{\psi}_{\textup{T,ZSB}}^-, \hat{\psi}_{\textup{T,ZSB}}^+]$.  Does it converge to a limiting set containing all values of $\psi_{\textup{T}}$ compatible with Assumption \ref{assumption:msm} and no others?  Sensitivity analyses with this asymptotic optimality property are called ``sharp" in the partial identification literature.

Sharpness is important for interpreting the results of a sensitivity analysis.  If the primary analysis finds a positive treatment effect but the bounds associated with a very small value of $\Lambda$ include zero, one might be tempted to conclude that the primary analysis is sensitive to unobserved confounding.  However, unless the bounds are known to be sharp, this inference is not warranted even in large samples. Perhaps the bounds were just too conservative.

Despite its attractive features, the ZSB sensitivity analysis is not sharp. It can be arbitrarily conservative.  To illustrate this, consider a simple joint distribution of observables:
\begin{align}
    \begin{split}
        X &\sim \N(0, \sigma^2)\\
        Z \mid X &\sim \textup{Bernoulli}( \tfrac{1}{2})\\
        Y \mid X, Z &\sim \N(X, 1). \label{example}
    \end{split}
\end{align}
Suppose that a data analyst receives i.i.d. samples $(X_i, Y_i, Z_i)$ from this distribution and is willing to posit that Assumption \ref{assumption:msm} is satisfied with $\Lambda = 2$.  Let $\phi(\cdot)$ and $z_{\tau}$ denote the density and $\tau$-th quantile of the standard normal distribution, respectively.  The following result, which follows from Theorem \ref{theorem:psiATE_identified_set} in Section \ref{section:balancing_bounds_simpler}, writes the set of values of $\psi_{\textup{T}}$ compatible with Assumption \ref{assumption:msm} explicitly in terms of these quantities and shows that this ``partially identified" set is smaller than the limiting ZSB interval.

\begin{corollary}
\label{corollary:zsb_not_sharp}
\emph{\textbf{(ZSB is asymptotically conservative)}}\\
Let $(X_i, Y_i, Z_i)$ be i.i.d. samples from the joint distribution (\ref{example}).
\begin{enumerate}[label=(\roman*),topsep=0pt,itemsep=-1ex]
\item The set of values of $\psi_{\textup{T}}$ compatible with the bound $\Lambda = 2$ and the distribution (\ref{example}) is the interval $[ \pm \tfrac{3}{4} \phi( z_{2/3}) ] \approx [\pm 0.27]$.\label{item:ExampleIdentifiedSet}
\item However, with probability one, $[ \pm 0.27 \sqrt{\sigma^2 + 1} ] \subseteq [ \hat{\psi}_{\textup{T,ZSB}}^-, \hat{\psi}_{\textup{T,ZSB}}^+]$ for all large $n$.\label{item:ZSBLimitSet}
\end{enumerate}
\end{corollary}

The precise  meaning of \ref{item:ExampleIdentifiedSet} is the following:  for any $\psi_{\T} \in [\pm \tfrac{3}{4} \phi(z_{2/3})]$, it is possible to construct a distribution $\QSample$ for the full data $(X, Y(0), Y(1), Z, U)$ which marginalizes to (\ref{example}), satisfies Assumption \ref{assumption:msm} with $\Lambda = 2$, and has $\E_{\QSample}[ Y(1)] = \psi_{\T}$.  On the other hand, for any $\psi_{\T}$ not in this interval, it is impossible to construct such a distribution.  

Corollary \ref{corollary:zsb_not_sharp} implies that the ZSB interval typically includes many values of $\psi$ which cannot possibly be reconciled with the data.  The explanation for this conservatism is that the odds-ratio bound (\ref{or_bound}) does not capture all of the restrictions on the true propensity score $e_0$.  Additional information can be found in the marginal distribution of the \emph{observed} characteristics.  For example, in the context of Corollary \ref{corollary:zsb_not_sharp}, consider the putative propensity score (\ref{putative}).
\begin{align}
\bar{e}(x, u) = 
\left\{
\begin{array}{ll}
1/3 &\text{if } x < 0\\
2/3 &\text{if } x \geq 0
\end{array}
\right. \label{putative}
\end{align}
This certainly satisfies the odds-ratio bound (\ref{or_bound}) --- and is therefore a possible value of $\bar{e}$ in the ZSB optimization problem (\ref{zsb_bounds}) --- but it could not possibly be the true propensity score $e_0$.  If it were, we would observe $\PSample(Z = 1 | X \geq 0) = \tfrac{2}{3}$, while the observed data distribution $\PSample$ demands that $\PSample(Z = 1 | X \geq 0) = \tfrac{1}{2}$. Another way of saying this is that $\bar{e}$ does not \emph{marginalize} to the nominal propensity score:
\begin{align*}
1/2 &= P(Z = 1 | X = x)\\
&= \int P(Z = 1 | X = x, U = u) \, \d P(u | X = x)\\
&\neq \int \bar{e}(x, u) \, \d P(u | X = x)\\
&= \left\{
\begin{array}{ll}
1/3 &\text{if } x < 0\\
2/3 &\text{if } x \geq 0
\end{array}
\right. .
\end{align*}
In short, this choice of $\bar{e}$ is allowed in the domain of the ZSB optimization problem but is incompatible with the distribution of observed data.

This example suggests that it should be possible to improve upon the ZSB bounds by only optimizing over the subset of $\mathcal{E}_n(\Lambda)$ which is ``data compatible."  However, this is easier said than done, because the observed data distribution actually imposes an infinite number of constraints on putative propensity scores $\bar{e}$.  For example, the true $e_0$ ``balances" all integrable functions $h : \X \rightarrow \R$:
\begin{align}
\begin{split}
    \E[ h(X) Z / e_0(X, U)] & = \E[h(X) \E[Z | X, U] / e_0(X, U)] \\
    & = \E[h(X) e_0(X, U) / e_0(X, U)]  \\
    & = \E[h(X)]. \label{population_balance_h}
\end{split}
\end{align}
Every such $h$ gives rise to a testable ``balancing constraint" (\ref{balance_h}) which can be used to rule out incompatible values of $\bar{e}$.  
\begin{align}
\frac{\E_n[ h(X) Z / \bar{e}]}{\E_n[ Z / \bar{e}]} \approx \E[h(X)] \label{balance_h}
\end{align}
In other words, any sharp sensitivity analysis must contend with an infinite number of constraints, which is typically computationally intractable \citep{BeresteanuEtAl, DaveziesDHault}.  Previous works have considered relaxing these constraints by balancing only a finite set of functions \citep{tan2006, tudballZhaoEtAl2019interval}, but the resulting bounds are generally not sharp.

While this paper proceeds under the ``superpopulation" model of causal inference, the idea that observable quantities can constrain unobserved variables can also be applied in the ``finite population" model.  See \cite{tudballZhaoEtAl2019interval} for an application of this idea to partial identification in survey sampling problems.

\section{Partial identification results} \label{section:partial_identification}

In this section, we show that at the \emph{population} level, it is possible to characterize the sharp bounds for $\psi_0 \in \{ \psi_{\textup{T}}, \psi_{\textup{C}}, \psi_{\textup{ATT}}, \psi_{\textup{ATE}} \}$ without ignoring or relaxing any of the infinitely many balancing constraints on the true propensity score.  We apply these partial identification results to finite-sample sensitivity analysis in Section \ref{section:sensitivity_analysis}.  

To state these results formally, we need a few pieces of additional notation.  Recall that Assumption \ref{assumption:msm} requires the true propensity score $e_0(X, U)$ to satisfy the following odds-ratio bound:
\begin{align*}
\Lambda^{-1} \leq \frac{e_0(X, U)/[1 - e_0(X, U)]}{e(X)/[1 - e(X)]} \leq \Lambda.
\end{align*}
Therefore, it is natural to define $\mathcal{E}_{\infty}(\Lambda)$ to be the set of all random variables $\bar{E}$ which satisfy the same condition:
\begin{align}
\mathcal{E}_{\infty}(\Lambda) := \left\{ \bar{E} \, : \, \Lambda^{-1} \leq \frac{\bar{E} / (1 - \bar{E})}{e(X)/(1 - e(X))} \leq \Lambda \text{ with probability one} \right\}. 
\end{align}
This can be viewed as the ``population" version of the ZSB constraint set $\mathcal{E}_n(\Lambda)$.  

Additionally, we define the conditional distribution function $F(y | x, z)$ and quantile function $Q_t(x, z)$ by:
\begin{align*}
F(y | x, z) &= \PSample(Y \leq y \mid X = x, Z = z)\\
Q_t(x, z) &= \inf \{ q \in \R \, : \, F(q | x, z) \geq t \}.
\end{align*}
Since these functions only refer to observed quantities, they are identified from the observed-data distribution.

\subsection{Partial identification via quantile balancing}\label{section:balancing_bounds_simpler}

Our first partial identification result shows that to compute optimal bounds for $\psi_{\T}$, the infinitely-many balancing constraints described in Section \ref{section:sharpness} can actually be reduced to a \emph{single} constraint.  In particular, it suffices to minimize/maximize the function $\bar{E} \mapsto \E[YZ/\bar{E}]$ over the set of putative propensity scores $\bar{E} \in \mathcal{E}_{\infty}(\Lambda)$ that ``balance" a particular conditional quantile of $Y$.

\begin{theorem}
\label{theorem:psiT_identified_set}
\textup{\textbf{(Optimal bounds for $\psi_{\textup{T}}$)}}\\
For any $\Lambda \geq 1$, the set of values of $\psi_{\textup{T}}$ compatible with the observed data distribution and Assumption \ref{assumption:msm} is a closed interval $[\psi_{\textup{T}}^-, \psi_{\textup{T}}^+]$.  Moreover, if we define $\tau = \tfrac{\Lambda}{\Lambda + 1}$, then the interval endpoints solve (\ref{psiT_minus}) and (\ref{psiT_plus}).
\begin{align}
\psi_{\textup{T}}^- &= \min_{\bar{E} \in \mathcal{E}_{\infty}(\Lambda)} \E[ YZ/\bar{E}] \quad \text{subject to} \quad \E[ Q_{1 - \tau}(X, 1) Z / \bar{E}] = \E[ Q_{1 - \tau}(X, 1)] \label{psiT_minus} \\
\psi_{\textup{T}}^+ &= \max_{\bar{E} \in \mathcal{E}_{\infty}(\Lambda)} \E[ Y Z / \bar{E}] \quad \text{subject to} \quad \E[ Q_{\tau}(X, 1) Z / \bar{E}] = \E[ Q_{\tau}(X, 1)] \label{psiT_plus}.
\end{align}
\end{theorem}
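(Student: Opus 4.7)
The plan is to reformulate the partial-identification problem as an optimisation over a single random variable $\bar{E}$ standing in for $e_0(X, U)$, and then prove both directions by Lagrangian duality with a carefully chosen multiplier. For any full-data law $Q$ compatible with the observables and Assumption \ref{assumption:msm}, one can replace the latent $U$ by $\bar{E} := e_0(X, U)$; the joint law of $(X, Y, Z, \bar{E})$ then satisfies (i) the correct marginal of $(X, Y, Z)$, (ii) the odds-ratio bound $\bar{E} \in \mathcal{E}_\infty(\Lambda)$, (iii) the propensity identity $\Pprob(Z = 1 \mid X, \bar{E}) = \bar{E}$, and (iv) $(Y(0), Y(1)) \indep Z \mid (X, \bar{E})$. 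Under (iii)--(iv), a one-line IPW computation gives $\psi_{\textup{T}} = \E_Q[Y(1)] = \E[YZ/\bar{E}]$, so the identified set is the image of this functional over admissible $\bar{E}$. Necessity of the bounds is then immediate: (iii) implies the balance identity $\E[h(X) Z/\bar{E}] = \E[h(X)]$ for every integrable $h$, so choosing $h = Q_\tau(\cdot, 1)$ or $h = Q_{1-\tau}(\cdot, 1)$ makes the true $\bar{E}$ feasible for (\ref{psiT_plus}) or (\ref{psiT_minus}), placing every compatible $\psi_{\textup{T}}$ in $[\psi_{\textup{T}}^-, \psi_{\textup{T}}^+]$.

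For sharpness of the upper bound, I would attach multiplier $1$ to the quantile constraint and consider the Lagrangian $\E[(Y - Q_\tau(X, 1)) Z / \bar{E}] + \E[Q_\tau(X, 1)]$. Maximising pointwise over $\bar{E}$ in the interval $[\underline{e}(X), \overline{e}(X)]$ induced by $\mathcal{E}_\infty(\Lambda)$, the integrand is monotone in $1/\bar{E}$ with sign determined by $Y - Q_\tau(X, 1)$, so the optimiser is the two-point rule
\begin{equation*}
\bar{E}^* \;=\; \underline{e}(X)\,\mathbf{1}\{Y > Q_\tau(X, 1)\} \;+\; \overline{e}(X)\,\mathbf{1}\{Y \leq Q_\tau(X, 1)\}
\end{equation*}
on the event $\{Z = 1\}$. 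The specific choice $\tau = \Lambda/(\Lambda+1)$ is what makes this $\bar{E}^*$ feasible: a short calculation using $\Pprob(Y \leq Q_\tau(X,1) \mid X, Z = 1) = \tau$ together with the explicit formulas for $\underline{e}, \overline{e}$ shows that $\E[Q_\tau(X,1) Z/\bar{E}^* \mid X] = Q_\tau(X, 1)$, so the quantile balance holds and the multiplier $1$ is dual-optimal.

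The final step is to upgrade $\bar{E}^*$, defined so far only on $\{Z = 1\}$, into an actual compatible distribution $Q$. I would specify the conditional law of $\bar{E}$ on $\{Z = 0\}$ as the unique two-point mixture of $\underline{e}(X)$ and $\overline{e}(X)$ enforcing (iii), and then draw $Y(0)$ on $\{Z = 1\}$ and $Y(1)$ on $\{Z = 0\}$ from the observed conditional laws given $(X, \bar{E})$ so that unconfoundedness (iv) holds. The symmetric construction with $Q_{1-\tau}$ achieves $\psi_{\textup{T}}^-$, and convexity of the set of compatible joint distributions (mixtures still satisfy the odds-ratio bound after augmenting $U$ with the mixture indicator) produces every intermediate value, giving the full closed interval. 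The main obstacle I foresee is this construction step: verifying (iii) for $\bar{E}^*$ and, more subtly, handling outcome distributions with an atom at $Q_\tau(X, 1)$, where the two-point optimiser must be randomised on the atom to keep the balance identity exact---this appears to be where the continuity hypothesis on $Y$ foreshadowed in the introduction would enter.
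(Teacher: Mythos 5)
Your proposal is correct and follows essentially the same route as the paper: the balance identity $\E[h(X) Z / e_0(X,U)] = \E[h(X)]$ gives one inclusion, the two-point threshold rule at $Q_{\tau}(X,1)$ with $\tau = \Lambda/(\Lambda+1)$ is shown optimal by exactly your multiplier-one Lagrangian argument (Proposition \ref{proposition:psiT_formulas}), attainment is certified by an explicit construction of a compatible full-data law (Proposition \ref{proposition:data_compatibility_psiT}), and the atom at the conditional quantile is handled by assigning an interpolated propensity value there via the intermediate value theorem, which plays the role of your randomization. One small correction: no continuity assumption on $Y$ is needed for this identification result --- that hypothesis enters only in the quantile-regression estimation theory --- since the atom fix you describe already closes the argument for discrete outcomes.
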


We will highlight a few important takeaways from this theorem.  First, if one adds additional balancing constraints of the form $\E[ h(X) Z / \bar{E}] = \E[ h(X)]$ in (\ref{psiT_minus}) and (\ref{psiT_plus}), the value of these problems will not change.  Thus, for the purposes of computing population-level bounds, the quantile balancing constraints in Theorem \ref{theorem:psiT_identified_set} capture all the information in the observed data.  Second, the fact that only a single conditional quantile appears in each of the sharp bounds for $\psi_{\T}$ reflects a special advantage of the marginal sensitivity model.  For alternative sensitivity assumptions, sharp bounds often involve distinct quantiles $Q_{\tau(x)}$ for each covariate level \citep{LeeSelection, MastenPoirerSharp}, complicating estimation by potentially requiring estimates of the entire conditional quantile process \citep{masten2020assessing, semenova2020better}.  Third, this result shows that the ZSB sensitivity analysis for IPW can only be sharp when the conditional quantiles of $Y$ do not depend on $X$ at all, and can therefore be refined outside pathological cases. AIPW-based variants of the ZSB sensitivity analysis will generally refine the IPW bounds since some of the variability in the quantiles of $Y$ will be absorbed by the regression function. We discuss AIPW sensitivity analysis in Section \ref{section:balance_aipw}.

We can extend the theorem to other estimands. To bound $\psi_{\textup{C}}$, exchange the labels ``treated" and ``control" and apply Theorem \ref{theorem:psiT_identified_set}.  Sharp bounds on $\psi_{\textup{C}}$ can be translated into sharp bounds on $\psi_{\textup{ATT}}$ using the relation $\psi_{\text{ATT}} = \tfrac{\E[ Y ] - \psi_{\textup{C}}}{\PSample(Z = 1)}$.

\begin{corollary}
\label{corollary:psiC_att_identified_set}
\textup{\textbf{(Optimal bounds for $\psi_{\textup{C}}$ and $\psi_{\textup{ATT}}$)}}\\
In the setting of Theorem \ref{theorem:psiT_identified_set}, the partially identified set for $\psi_{\textup{C}}$ is the interval $[\psi_{\textup{C}}^-, \psi_{\textup{C}}^+]$, where the interval endpoints solve (\ref{psiC_minus}) and (\ref{psiC_plus}).
\begin{align}
\psi_{\textup{C}}^- &= \min_{\bar{E} \in \mathcal{E}_{\infty}(\Lambda)} \E [ Y \tfrac{1 - Z}{1 - \bar{E}}] \quad \text{subject to} \quad \E[Q_{1 - \tau}(X, 0) \tfrac{1 - Z}{1 - \bar{E}}] = \E[ Q_{1 - \tau}(X, 0)] \label{psiC_minus}\\
\psi_{\textup{C}}^+ &= \max_{\bar{E} \in \mathcal{E}_{\infty}(\Lambda)} \E[ Y \tfrac{1 - Z}{1 - \bar{E}}] \quad \text{subject to} \quad \E[ Q_{\tau}(X, 0) \tfrac{1 - Z}{1 - \bar{E}}] = \E[ Q_{\tau}(X, 0)] \label{psiC_plus}
\end{align}
The partially identified set for $\psi_{\textup{ATT}}$ is the interval $[ \psi_{\textup{ATT}}^-, \psi_{\textup{ATT}}^+]$, where $\psi_{\textup{ATT}}^{\mp} = \tfrac{\E[ Y ] - \psi_{\textup{C}}^{\pm}}{\PSample(Z = 1)}$.
\end{corollary}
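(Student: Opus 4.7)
The plan is to derive the $\psi_{\textup{C}}$ bounds by a symmetry argument that reduces the problem to Theorem \ref{theorem:psiT_identified_set}, and then to obtain the $\psi_{\textup{ATT}}$ bounds by propagating through an affine identity whose remaining ingredients are point-identified from the observed distribution.

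For $\psi_{\textup{C}}$, I would relabel by setting $\tilde{Z} = 1 - Z$ and viewing $Y(0)$ as the ``treated'' potential outcome under this swap. The nominal and true propensities become $\tilde{e}(X) = 1 - e(X)$ and $\tilde{e}_{0}(X, U) = 1 - e_{0}(X, U)$. The key observation is that the odds-ratio condition (\ref{or_bound}) is invariant under $a \mapsto 1 - a$: the ratio simply inverts, and $[\Lambda^{-1}, \Lambda]$ is closed under inversion. Consequently Assumption \ref{assumption:msm} holds for the relabeled problem with the same $\Lambda$, and the map $Z \leftrightarrow 1 - Z$ induces a bijection between admissible full-data laws for the two problems. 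Applying Theorem \ref{theorem:psiT_identified_set} to the relabeled problem and parametrizing its feasible weights by $\bar{E}'' = 1 - \bar{E}$, one checks directly that $\bar{E}''$ satisfies the odds-ratio bound with respect to $\tilde{e}$ if and only if $\bar{E} \in \mathcal{E}_{\infty}(\Lambda)$ with respect to $e$, by inverting the defining inequality. Substituting $\tilde{Z}/\bar{E}'' = (1-Z)/(1-\bar{E})$ and noting that the relabeled conditional quantile at ``treated'' $= 1$ is the original $Q_{\tau}(X, 0)$ converts (\ref{psiT_minus})--(\ref{psiT_plus}) directly into (\ref{psiC_minus})--(\ref{psiC_plus}), with $\tau = \Lambda/(\Lambda + 1)$ unchanged.

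For $\psi_{\textup{ATT}}$, I would start from the tautological decomposition
\[
\E[Y(0)] = \E[Y \mid Z = 0]\,\Pprob(Z = 0) + \E[Y(0) \mid Z = 1]\,\Pprob(Z = 1),
\]
which requires no assumptions beyond consistency. Solving for $\E[Y(0) \mid Z = 1]$ and substituting into $\psi_{\textup{ATT}} = \E[Y \mid Z = 1] - \E[Y(0) \mid Z = 1]$ exhibits $\psi_{\textup{ATT}}$ as a strictly decreasing affine function of $\psi_{\textup{C}}$, with all remaining coefficients depending only on the observed distribution. Each admissible value of $\psi_{\textup{C}}$ therefore maps bijectively to a single admissible value of $\psi_{\textup{ATT}}$, so the partially identified set for $\psi_{\textup{ATT}}$ is the image of $[\psi_{\textup{C}}^{-}, \psi_{\textup{C}}^{+}]$ under this affine map. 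The decreasing monotonicity is what produces the swap of $\pm$ to $\mp$ in the stated endpoint formulas, and a short rearrangement recovers the exact expression.

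The main obstacle is the modest book-keeping required to justify the relabeling at the level of the full-data distributions on $(X, Y(0), Y(1), Z, U)$: one must verify that the bijection $Q \leftrightarrow \tilde{Q}$ induced by $Z \leftrightarrow 1 - Z$ preserves both the observed-data marginal and compatibility with Assumption \ref{assumption:msm}, so that the partially identified sets for the original and relabeled problems are genuinely in correspondence. Once this bijection is in place, the claim for $\psi_{\textup{C}}$ is immediate from Theorem \ref{theorem:psiT_identified_set}, and the claim for $\psi_{\textup{ATT}}$ follows from the affine identity above.
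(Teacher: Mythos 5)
Your proposal is correct and is essentially the paper's own argument: the paper obtains the $\psi_{\C}$ bounds by exchanging the ``treated'' and ``control'' labels (equivalently $Z \leftrightarrow 1-Z$, $e \leftrightarrow 1-e$, under which the odds-ratio bound is invariant because $[\Lambda^{-1},\Lambda]$ is closed under inversion) and applying Theorem \ref{theorem:psiT_identified_set}, then passes to $\psi_{\textup{ATT}}$ through the same point-identified affine relation you write down. Your careful bookkeeping of that affine identity in fact yields $\psi_{\textup{ATT}} = (\E[Y] - \psi_{\C})/\PSample(Z=1)$, which is the intended reading of the relation displayed in the corollary.
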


Sharp bounds for $\psi_{\textup{ATE}}$ can be obtained by subtracting sharp bounds for $\psi_{\textup{T}}$ and $\psi_{\textup{C}}$.  Equivalently, these bounds can be obtained by solving optimization problems with two quantile balancing constraints.  Although this result is superficially similar to Theorem \ref{theorem:psiT_identified_set} and Corollary \ref{corollary:psiC_att_identified_set}, its proof requires a novel construction, which we discuss in Section \ref{section:explaining_bounds_ATE}. 

\begin{theorem}
\label{theorem:psiATE_identified_set}
\textup{\textbf{(Optimal bounds for $\psi_{\textup{ATE}}$)}}\\
For any $\Lambda \geq 1$, the set of values of $\psi_{\textup{ATE}}$ compatible with the observed data distribution and Assumption \ref{assumption:msm} is a closed interval $[\psi_{\textup{ATE}}^-, \psi_{\textup{ATE}}^+]$ where $\psi_{\textup{ATE}}^- = \psi_{\textup{T}}^- - \psi_{\textup{C}}^+$ and $\psi_{\textup{ATE}}^+ = \psi_{\textup{T}}^+ - \psi_{\textup{C}}^-$.
\end{theorem}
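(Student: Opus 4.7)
The plan is to establish both containment directions. For the \emph{easy} direction, I would observe that whenever $\QSample$ is a joint distribution on $(X, Y(0), Y(1), Z, U)$ that satisfies Assumption~\ref{assumption:msm} and marginalizes to the observed-data distribution $\PSample$, its projections onto $(X, Y(1), Z, U)$ and $(X, Y(0), Z, U)$ each satisfy the hypotheses of Theorem~\ref{theorem:psiT_identified_set} and Corollary~\ref{corollary:psiC_att_identified_set}, respectively: the conditional independences $Y(1)\indep Z\mid(X,U)$ and $Y(0)\indep Z\mid(X,U)$ both follow from the joint version, and the odds-ratio bound on $e_0(X,U)$ is unchanged by marginalization. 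Hence $\E_{\QSample}[Y(1)]\in[\psi_{\textup{T}}^-,\psi_{\textup{T}}^+]$ and $\E_{\QSample}[Y(0)]\in[\psi_{\textup{C}}^-,\psi_{\textup{C}}^+]$, yielding $\E_{\QSample}[Y(1)-Y(0)]\in[\psi_{\textup{T}}^--\psi_{\textup{C}}^+,\,\psi_{\textup{T}}^+-\psi_{\textup{C}}^-]$.

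For the reverse inclusion, I would first argue that the identified set is a closed interval. The family of valid joint distributions is convex: given $\QSample_1,\QSample_2$ both valid, a mixture $\alpha\QSample_1+(1-\alpha)\QSample_2$ can be realized by enlarging the confounder with a Bernoulli($\alpha$) mixing indicator $M$, which preserves both the marginalization to $\PSample$ and the odds-ratio bound (it holds after conditioning on $M$, and hence after conditioning on the augmented latent). Since $\QSample\mapsto\E_{\QSample}[Y(1)-Y(0)]$ is linear, the identified set is convex, hence an interval. It therefore suffices to exhibit two distributions $\QSample^\pm$ attaining the extremes $\psi_{\textup{T}}^\pm-\psi_{\textup{C}}^\mp$; any intermediate value then arises from a mixture of $\QSample^+$ and $\QSample^-$, and closedness is inherited.

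The remaining task---constructing, say, a $\QSample^+$ for which $\E[Y(1)]=\psi_{\textup{T}}^+$ and $\E[Y(0)]=\psi_{\textup{C}}^-$ hold \emph{simultaneously}---is the main obstacle and the source of the ``novel construction'' alluded to in the statement. A naive product coupling that runs the extremal constructions from Theorem~\ref{theorem:psiT_identified_set} and Corollary~\ref{corollary:psiC_att_identified_set} with independent latents $(U^{(1)},U^{(0)})$ fails, because their odds-ratio contributions multiply and yield a joint bound of $\Lambda^2$ rather than $\Lambda$. Instead, I would introduce a \emph{single} binary latent $U$, with $\PSample(U=1\mid X)$ calibrated so that $e_0(X,U)$ takes only the two extremal odds values $\tfrac{\Lambda e(X)}{\Lambda e(X)+1-e(X)}$ and $\tfrac{e(X)}{e(X)+\Lambda(1-e(X))}$ while averaging back to $e(X)$, and then specify the unobserved counterfactual conditionals $Y(1)\mid X,U,Z=0$ and $Y(0)\mid X,U,Z=1$ by quantile transport so that $U$ serves as a monotone indicator for the joint event ``$Y(1)$ lies above $Q_{\tau}(X,1)$ and $Y(0)$ lies above $Q_{1-\tau}(X,0)$.'' The hardest step is verifying internal consistency of this specification: that the induced $\PSample(Z=1\mid X,U)$ equals the prescribed $e_0$, that the observed conditionals $Y\mid X,Z$ are preserved, and that the implied treatment weights agree pointwise with the two extremal optimizers appearing in Theorem~\ref{theorem:psiT_identified_set} and Corollary~\ref{corollary:psiC_att_identified_set}, so that both $\psi_{\textup{T}}^+$ and $\psi_{\textup{C}}^-$ are attained under the same $\QSample^+$.
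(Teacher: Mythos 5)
Your plan follows essentially the same route as the paper's proof: the upper/lower bound by decomposing $\sup_{\QSample}\E_{\QSample}[Y(1)-Y(0)]$, simultaneous attainment via a \emph{single} binary confounder $U$ calibrated to take only the two extremal propensity values (with $\tau/(1-\tau)=\Lambda$ making the Bayes calculation close) and counterfactuals imputed by quantile transport so that $U$ aligns with the upper-tail events in both arms, and finally convexity via a mixing indicator appended to the latent. The only cosmetic difference is that you mix between the two corners $(\psi_{\T}^+,\psi_{\C}^-)$ and $(\psi_{\T}^-,\psi_{\C}^+)$, whereas the paper constructs all four extremal distributions to cover the full rectangle; for the ATE statement your two corners suffice.
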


In certain special cases, the partially identified set for $\psi_{\textup{ATE}}$ can be computed more explicitly.  These explicit bounds are useful for gaining intuition about the main factors that make a causal estimate more or less robust to unobserved confounding.  Corollary \ref{corollary:additive_noise_identified_set}, which is a corollary of our later work, gives such bounds in the Gaussian outcome model (\ref{eq:additive_noise}).
\begin{align}
\begin{split}
    X &\sim P_X\\
    Z \mid X & \sim \textup{Bernoulli}(e(X))\\
    Y \mid X, Z& \sim \N(\mu(X, Z), \sigma^2(X)). \label{eq:additive_noise}
\end{split}
\end{align}

\begin{corollary}
\label{corollary:additive_noise_identified_set}
\emph{\textbf{(Simpler bounds for Gaussian data)}}\\
Suppose the observed-data distribution has the factorization (\ref{eq:additive_noise}), with $0 < e(X) < 1$ almost surely and $\E[ | \mu(X, Z)|] < \infty$.  
Let $\psi_{\textup{ATE}} = \E[\mu(X, 1) - \mu(X, 0)]$ be the nominal ATE.  Then the partially identified set for the ATE under Assumption \ref{assumption:msm} is:
\begin{align}
[\psi_{\textup{ATE}}^-, \psi_{\textup{ATE}}^+] &= [\psi_{\textup{ATE}} \pm \tfrac{\Lambda^2 - 1}{\Lambda} \phi(\Phi^{-1} (\tfrac{\Lambda}{\Lambda + 1})) \E[ \sigma(X)] ] \label{additive_noise_formula}.
\end{align}
Here, $\phi$ and $\Phi$ are the standard normal density and distribution function, respectively.
\end{corollary}

For a fixed bound $\Lambda$ on the degree of unobserved confounding, the formula (\ref{additive_noise_formula}) shows that two key features map the observed data distribution to robustness.  The first is the magnitude of the nominal ATE:  all else equal, larger nominal effects are more robust.  The second is the average noise level $\E[ \sigma(X)]$: the better the measured variables predict the outcome, the less unobserved confounding can affect our estimates.  In the extreme case where $X$ and $Z$ perfectly predict $Y$, then the ATE remains point-identified no matter how large $\Lambda$ is, as long as overlap holds. These insights are not specific to the marginal sensitivity model.  In alternative sensitivity models, they have also been observed by 
\cite{rosenbaum2005, hsu_small_rosenbaum2013}, \cite{CinelliHazlett}, and others.
\cite{rosenbaum2005, hsu_small_rosenbaum2013, CinelliHazlett}, and others.

\subsection{Data-compatible propensity scores} \label{section:explaining_bounds_APO}

Although the qualitative implications of Corollary \ref{corollary:additive_noise_identified_set} are plausible, we nevertheless find the quantile balancing formulas of Section \ref{section:balancing_bounds_simpler} to be counterintuitive. After all, it is certainly not true that every random variable $\bar{E} \in \mathcal{E}_{\infty}(\Lambda)$ satisfying $\E[ Q_{\tau}(X, 1) Z / \bar{E}] = \E[ Q_{\tau}(X, 1)]$ could plausibly be the true propensity score $e_0(X, U)$.  Indeed, the constraints of the quantile-balancing optimization problems do not even enforce that $\E[ Z / \bar{E}] = 1$. Our intuition for why the ZSB procedure is conservative suggests the quantile balancing formulas should be conservative as well. 

To explain how these results are possible, we begin by characterizing which random variables $\bar{E}$ could plausibly be the true propensity score $e_0(X, U)$.  The calculation (\ref{population_balance_h}) indicates that $\bar{E}$ should at least satisfy $\E[ h(X) Z / \bar{E}] = \E[ h(X)]$ for all integrable $h$, or equivalently, $\E[Z / \bar{E} | X] = 1$.  Proposition \ref{proposition:data_compatibility_psiT} shows that for the purposes of bounding $\psi_{\T}$, this is actually the \emph{only} constraint on $\bar{E}$ implied by the distribution of observables. Similar results appear in \cite{BirminghamJRSSB, Robins_etal_2000, tan2006, graham2011, hristache_patilea2017, FranksEtAl, zsb2019}.

\begin{proposition}
\label{proposition:data_compatibility_psiT}
\emph{\textbf{(Characterizing data-compatible propensity scores)}}\\
For any random variable $\bar{E} \in \mathcal{E}_\infty(\Lambda)$ satisfying $\E[ Z / \bar{E} | X] = 1$, there is a distribution $\QSample$ for $(X, Y(0), Y(1), Z, U)$ with the following properties:  
\begin{enumerate}[label=(\roman*),topsep=0pt,itemsep=-1ex]
\item The distribution of the observables $(X, Y, Z)$ is the same under $\PSample$ and $\QSample$. 
\item $\QSample$ satisfies Assumption \ref{assumption:msm}.
\item $\E_{\QSample}[Y(1)] = \E_{\PSample}[ YZ / \bar{E}]$.
\end{enumerate}
\end{proposition}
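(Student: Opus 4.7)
The plan is to construct a full-data distribution $\QSample$ explicitly by introducing a latent variable $U$ (on an enriched probability space) that will play the role of the true propensity score, with a carefully chosen tilt of the $\PSample$-distribution on the treated arm. Specifically, define $\QSample$ on $(X, Y(0), Y(1), Z, U)$ by:
\begin{align*}
\QSample(X \in \cdot) &= \PSample(X \in \cdot),\\
\QSample(U \in d\bar{e},\, Y(1) \in dy \mid X = x) &= \frac{e(x)}{\bar{e}}\,\PSample(\bar{E} \in d\bar{e},\, Y \in dy \mid X = x, Z = 1),
\end{align*}
with $Y(0) \mid (X, U)$ drawn from $\PSample(Y \in \cdot \mid X, Z = 0)$ independently of $Y(1)$, and then $Z \mid (X, U) \sim \mathrm{Bernoulli}(U)$ drawn independently of $(Y(0), Y(1))$. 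The hypothesis $\E_{\PSample}[Z/\bar{E} \mid X] = 1$ rearranges to $\E_{\PSample}[1/\bar{E} \mid X, Z = 1] = 1/e(X)$, which is precisely the normalization needed for the displayed kernel to be a valid probability measure.

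To verify (i), I would check the three observable conditionals. Marginalizing out $Y(1)$ in the displayed kernel gives $\E_{\QSample}[U \mid X] = e(X)$ directly, so $\QSample(Z = 1 \mid X) = e(X)$. For the treated outcome, $\QSample(Y \leq y,\, Z = 1 \mid X) = \E_{\QSample}[U\,\mathbf{1}(Y(1) \leq y) \mid X] = e(X) F(y \mid X, 1)$, because the $\bar{e}$ weight from $U$ cancels the $1/\bar{e}$ in the tilt. The control-arm marginal is immediate since $Y(0) \mid (X, U)$ is by construction distributed as $\PSample(Y \in \cdot \mid X, Z = 0)$, independent of $U$.

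Claim (ii) holds by construction: $Z$ is drawn conditionally independent of $(Y(0), Y(1))$ given $(X, U)$, so unconfoundedness is immediate, and the odds-ratio bound (\ref{or_bound}) holds because $e_0(X, U) = \QSample(Z = 1 \mid X, U) = U$ lies almost surely in the range of $\bar{E}$, which belongs to $\mathcal{E}_{\infty}(\Lambda)$ by hypothesis. For (iii), one computes $\E_{\QSample}[Y(1) \mid X] = e(X)\,\E_{\PSample}[Y/\bar{E} \mid X, Z = 1] = \E_{\PSample}[YZ/\bar{E} \mid X]$ by plugging the kernel into the definition of $\E_{\QSample}$, and then takes the outer expectation over $X$.

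The main obstacle is discovering the correct tilting factor $e(x)/\bar{e}$: the $1/\bar{e}$ piece is what makes $U$ function as a balancing propensity in the sense that $\E_{\QSample}[U \cdot h(X) \mid X] = e(X) h(X)$ for any $h$, while the $e(x)$ piece normalizes the kernel to a probability measure. These two roles are delicately linked by the hypothesis $\E_{\PSample}[Z/\bar{E} \mid X] = 1$; dropping or perturbing this constraint would either break the normalization or spoil the observable marginals. Everything else is routine bookkeeping with conditional expectations, together with a standard extension-of-measure argument to ensure the enriched space exists.
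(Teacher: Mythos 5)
Your construction is correct, and it arrives at essentially the same distribution $\QSample$ as the paper, but it is organized in the dual direction. The paper works pathwise on the original probability space: it keeps $(X,Y,Z,\bar{E})$ fixed, augments with independent uniforms, sets $U=\bar{E}$ on the treated arm and draws $U$ on the control arm from the tilted law $K(u\mid x)=\int_{-\infty}^{u}\tfrac{e(x)}{1-e(x)}\tfrac{1-\bar{e}}{\bar{e}}\,\d H(\bar{e}\mid x,1)$, so that property (i) is automatic and the work goes into verifying unconfoundedness via joint conditional CDFs and into computing $e_0(x,u)=u$ by Bayes' theorem. You instead specify the joint law directly, with $(U,Y(1))\mid X$ given by the tilt $e(x)/\bar{e}$ of the treated-arm conditional law of $(\bar{E},Y)$ and $Z\mid(X,U)\sim\mathrm{Bernoulli}(U)$, which makes (ii) and (iii) immediate and shifts the verification burden to the observable marginals in (i); your $\QSample(U\in\cdot\mid X,Z=0)$ is exactly the paper's $K(\cdot\mid x)$, and both arguments pivot on the same normalization identity $\E_{\PSample}[1/\bar{E}\mid X,Z=1]=1/e(X)$. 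Your approach buys a cleaner treatment of unconfoundedness and of the identity $e_0(X,U)=U$; the paper's coupling form buys reusability (the same same-space construction is adapted for the more general Proposition 5B and for the ATE compatibility result). One small point of looseness: the odds-ratio bound for $U$ should be justified by absolute continuity of the $\QSample$-law of $(X,U)$ with respect to the $\PSample$-law of $(X,\bar{E})$ on $\{Z=1\}$ (the tilt $e(x)/\bar{e}$ is positive), rather than by $U$ ``lying in the range of $\bar{E}$''; with that phrasing tightened, the argument is complete.
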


In short, this result says that $\E[ YZ / \bar{E}]$ is a plausible value of $\psi_{\textup{T}}$ as long as $\E[ Z / \bar{E} | X] = 1$.  It is not hard to show that the converse also holds:  if $\psi$ is a plausible value of $\psi_{\T}$, then $\psi = \E[ YZ / \bar{E}]$ for some random variable $\bar{E}$ satisfying $\E[ Z / \bar{E} | X] = 1$.  As a result, the optimal bounds for $\psi_{\T}$ can be obtained by solving the variational problems in Corollary \ref{corollary:variational_problems}. 

\begin{corollary}
\label{corollary:variational_problems}
The partially identified set for $\psi_{\textup{T}}$ is an interval whose endpoints solve:
\begin{align}
\psi_{\textup{T}}^- &= \min_{\bar{E} \in \mathcal{E}_{\infty}(\Lambda)} \E[ YZ / \bar{E}] \quad \text{subject to} \quad \E[ Z / \bar{E} | X] = 1 \label{psi_minus_variational}\\
\psi_{\textup{T}}^+ &= \max_{\bar{E} \in \mathcal{E}_{\infty}(\Lambda)} \E[ YZ/\bar{E}] \quad \text{subject to} \quad \E[ Z / \bar{E} | X] = 1 \label{psi_plus_variational}
\end{align}
\end{corollary}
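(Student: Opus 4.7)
The plan is to show that the range of the variational objective $\E[YZ/\bar{E}]$ over $\bar{E} \in \mathcal{E}_{\infty}(\Lambda)$ satisfying $\E[Z/\bar{E} \mid X] = 1$ coincides exactly with the partially identified interval $[\psi_{\textup{T}}^-, \psi_{\textup{T}}^+]$. Since Theorem \ref{theorem:psiT_identified_set} already guarantees this set is a closed interval, matching infima and suprema will then yield the stated equalities. Throughout, the variational problem is interpreted as optimizing over joint laws of $(X, Y, Z, \bar{E})$ whose $(X, Y, Z)$-marginal is $\PSample$.

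One direction is immediate from Proposition \ref{proposition:data_compatibility_psiT}. For any feasible $\bar{E}$, that proposition exhibits a full-data distribution $\QSample$ marginalizing to $\PSample$, satisfying Assumption \ref{assumption:msm}, and with $\E_{\QSample}[Y(1)] = \E[YZ/\bar{E}]$. Hence every variational value lies in $[\psi_{\textup{T}}^-, \psi_{\textup{T}}^+]$, giving $\min_{\bar{E}} \E[YZ/\bar{E}] \geq \psi_{\textup{T}}^-$ and $\max_{\bar{E}} \E[YZ/\bar{E}] \leq \psi_{\textup{T}}^+$.

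For the reverse inclusion I would exhibit feasible $\bar{E}$'s attaining each endpoint. Closedness of the identified interval yields a witness distribution $\QSample_-$ satisfying Assumption \ref{assumption:msm}, marginalizing to $\PSample$, with $\E_{\QSample_-}[Y(1)] = \psi_{\textup{T}}^-$. Set $\bar{E} := \QSample_-(Z = 1 \mid X, U)$ on the probability space of $\QSample_-$. The odds-ratio bound under $\QSample_-$ places $\bar{E}$ in $\mathcal{E}_{\infty}(\Lambda)$, and $\E_{\QSample_-}[Z/\bar{E} \mid X, U] = 1$ tower-expectations to $\E_{\QSample_-}[Z/\bar{E} \mid X] = 1$. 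Using $Y = Z Y(1) + (1 - Z) Y(0)$ and latent-$U$ unconfoundedness, iterated conditioning gives
\begin{align*}
\E_{\QSample_-}[YZ/\bar{E}] = \E_{\QSample_-}[Y(1) Z/\bar{E}] = \E_{\QSample_-}\bigl[Y(1) \, \E_{\QSample_-}[Z/\bar{E} \mid X, U, Y(1)]\bigr] = \E_{\QSample_-}[Y(1)] = \psi_{\textup{T}}^-.
\end{align*}
Since the $(X, Y, Z)$-marginal of $\QSample_-$ coincides with $\PSample$, this choice of $\bar{E}$ makes the variational objective equal $\psi_{\textup{T}}^-$. The symmetric argument starting from a witness for $\psi_{\textup{T}}^+$ yields a maximizer.

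The main obstacle is conceptual rather than technical: the variational problem implicitly ranges over joint distributions of $(X, Y, Z, \bar{E})$ rather than over deterministic functions $\bar{E} = \bar{E}(X)$, and one must track which probability measure governs each expectation. Once this is clarified, the two inclusions follow from Proposition \ref{proposition:data_compatibility_psiT} in one direction and from the tautological choice $\bar{E} = \QSample(Z = 1 \mid X, U)$ under a witness law in the other.
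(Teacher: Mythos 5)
Your first inclusion is exactly the paper's, and your key idea for the reverse inclusion --- that $\bar{E} := \QSample(Z=1 \mid X, U)$ is itself feasible in the variational problem when one works on the extended probability space of $\QSample$ --- is sound and in some ways cleaner than what the paper does. (The paper instead stays on the original probability space and replaces $\QSample(Z=1\mid X,U)$ by the function of $(X,Y,Z)$ defined through $1/\bar{e}_{\QSample}(X,Y,Z) = \E[\,1/\QSample(Z=1\mid X,U)\mid X,Y,Z\,]$, checking that this conditional-expectation projection preserves the objective value, the balance condition, and membership in $\mathcal{E}_\infty(\Lambda)$.) Your verification that this $\bar{E}$ lies in $\mathcal{E}_\infty(\Lambda)$, balances conditionally on $X$, and reproduces $\E_{\QSample}[Y(1)]$ via iterated conditioning is correct.

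However, there is a genuine logical gap in how you deploy this idea: you invoke Theorem \ref{theorem:psiT_identified_set} to obtain closedness of the identified set and hence witness distributions $\QSample_{\pm}$ \emph{attaining} the endpoints. In the paper's development, the proof of Theorem \ref{theorem:psiT_identified_set} is itself derived from Corollary \ref{corollary:variational_problems} together with Proposition \ref{proposition:psiT_formulas} --- in particular, closedness of the identified interval and attainment of its endpoints are \emph{consequences} of this corollary, not available inputs to it. As written, your argument is therefore circular. The fix is short and keeps your construction intact: apply the argument to an \emph{arbitrary} $\QSample \in \mathcal{P}(\Lambda)$ rather than to an extremal witness. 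This shows that every value $\E_{\QSample}[Y(1)]$ in the identified set is attained by some feasible $\bar{E}$, so the supremum (resp.\ infimum) of the identified set is bounded above (resp.\ below) by the supremum (resp.\ infimum) of the variational problem; combined with your first inclusion from Proposition \ref{proposition:data_compatibility_psiT}, the two optimal values coincide without ever needing the endpoints to be attained. Attainment and closedness then follow afterward from Proposition \ref{proposition:psiT_formulas}, as in the paper.
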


Even though the variational problems (\ref{psi_minus_variational}) and (\ref{psi_plus_variational}) can be infinite-dimensional optimization problems with infinitely-many constraints, they have several nice features that enable them to be solved explicitly.  Some straightforward algebraic manipulation shows that the problem (\ref{psi_plus_variational}) can be written as:
\begin{align}
\begin{split}
\text{maximize} &\quad \E[ \E[ YZ / \bar{E} | X]]\\
\text{subject to} &\quad \E[ Z / \bar{E} | X] = 1\\
\text{and} &\quad 1 + \tfrac{1 - e(X)}{e(X)} \Lambda^{-1} \leq 1 /\bar{E} \leq 1 +  \tfrac{1 - e(X)}{e(X)} \Lambda.
\end{split}
\end{align}
Not only is this problem \emph{linear} in the decision ``variable" $1/\bar{E}$, it also separates across levels of $X$.  Therefore, it suffices to separately solve (\ref{x_specific_problem}) for each $x \in \mathcal{X}$.
\begin{align}
    \begin{split}
    \text{maximize} &\quad \E[ YZ / \bar{E} | X = x]\\
    \text{subject to} &\quad \E[ Z / \bar{E} | X = x] = 1\\
    \text{and} &\quad 1 + \tfrac{1 - e(x)}{e(x)} \Lambda^{-1} \leq 1 / \bar{E} \leq 1 + \tfrac{1 - e(x)}{e(x)} \Lambda \label{x_specific_problem}
    \end{split}
\end{align}
The problem (\ref{x_specific_problem}) requires us to maximize one expectation subject to an equality constraint on another expectation. This resembles the problem solved by the Neyman-Pearson lemma, and in fact is a special case of the generalization due to \cite{dantzig_wald_1951}.  The optimization problems posed in Theorem \ref{theorem:psiT_identified_set} also fall in this class.  It turns out that both of these problems have a common solution, given in Proposition \ref{proposition:psiT_formulas}.

\begin{proposition}
\label{proposition:psiT_formulas}
\emph{\textbf{(Formulas for the worst-case propensity scores)}}\\
There exist $\bar{E}_-$, $\bar{E}_+ \in \mathcal{E}_{\infty}(\Lambda)$ satisfying $\E[ Z / \bar{E}_- | X] = \E[ Z / \bar{E}_+ | X] = 1$ and also (\ref{cutoff_minus}) and (\ref{cutoff_plus}).
\begin{align}
1 / \bar{E}_- &= 
\left\{
\begin{array}{ll}
1 + \tfrac{1-e(X)}{e(X)} \Lambda^{+1} &\text{if } Y < Q_{1 - \tau}(X, 1)\\
1 + \tfrac{1-e(X)}{e(X)} \Lambda^{-1} &\text{if } Y > Q_{1 - \tau}(X, 1)
\end{array}
\right. \label{cutoff_minus}\\
1 / \bar{E}_+ &= \left\{
\begin{array}{ll}
1 + \tfrac{1-e(X)}{e(X)} \Lambda^{+1} &\text{if } Y > Q_{\tau}(X, 1)\\
1 + \tfrac{1-e(X)}{e(X)} \Lambda^{-1} &\text{if } Y < Q_{\tau}(X, 1)
\end{array}
\right. \label{cutoff_plus}
\end{align}
Further, $\bar{E}_-$ solves both (\ref{psiT_minus}) and (\ref{psi_minus_variational}), and $\bar{E}_+$ solves both (\ref{psiT_plus}) and (\ref{psi_plus_variational}).
\end{proposition}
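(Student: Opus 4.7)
The plan is to verify optimality of $\bar E_+$ (the case of $\bar E_-$ is analogous, with threshold $Q_{1 - \tau}$ in place of $Q_\tau$) via a Lagrangian / weak-duality argument in the spirit of the Neyman–Pearson lemma and its Dantzig–Wald generalization cited in the paper. The candidate $\bar E_+$ has the ``bang-bang'' structure typical of such problems: $1/\bar E_+$ saturates the upper box constraint when $Y$ exceeds the threshold $Q_\tau(X,1)$ and saturates the lower constraint when $Y$ falls below it. With the choice $\tau = \Lambda/(\Lambda+1)$, the conditional $\tau$-quantile will simultaneously serve as a pointwise Lagrange multiplier for the variational problem (\ref{psi_plus_variational}) and as a scalar Lagrange multiplier equal to $1$ for the quantile-balancing problem (\ref{psiT_plus}), so a single calculation will handle both.

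The first step is feasibility. Membership $\bar E_+ \in \mathcal{E}_\infty(\Lambda)$ is immediate from (\ref{cutoff_plus}). The key calculation is to verify that $\E[Z/\bar E_+ \mid X] = 1$ almost surely: conditioning on $X = x$ and using $\Pprob(Y > Q_\tau(x,1) \mid X = x, Z = 1) = 1 - \tau$, this reduces to the algebraic identity $\Lambda(1-\tau) + \Lambda^{-1} \tau = 1$, which holds precisely when $\tau = \Lambda/(\Lambda+1)$. The conditional identity confirms feasibility for (\ref{psi_plus_variational}); feasibility for (\ref{psiT_plus}) then follows by taking the unconditional expectation against the integrable function $Q_\tau(X,1)$.

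The second step is the duality argument. Consider the Lagrangian
\begin{align*}
L(\bar E) = \E\bigl[ (Y - Q_\tau(X,1)) Z / \bar E \bigr] + \E[ Q_\tau(X,1) ].
\end{align*}
Any $\bar E$ feasible for (\ref{psiT_plus}) satisfies $L(\bar E) = \E[ YZ / \bar E]$ by the scalar balancing constraint; any $\bar E$ feasible for (\ref{psi_plus_variational}) also satisfies $L(\bar E) = \E[YZ/\bar E]$, since $\E[Z/\bar E \mid X] = 1$ implies $\E[h(X) Z / \bar E] = \E[h(X)]$ for every integrable $h$. Maximizing $L$ over the box $\mathcal{E}_\infty(\Lambda)$ is pointwise in $(X,Y,Z)$: the integrand is linear in $1/\bar E$ with coefficient $(Y - Q_\tau(X,1)) Z$, so the maximizer takes $1/\bar E$ at its upper bound on $\{Z = 1, Y > Q_\tau(X,1)\}$ and at its lower bound on $\{Z = 1, Y < Q_\tau(X,1)\}$, which is exactly the prescription (\ref{cutoff_plus}) (the value on $\{Z = 0\}$ is irrelevant to the objective). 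Hence $\sup_{\bar E \in \mathcal{E}_\infty(\Lambda)} L(\bar E) = L(\bar E_+) = \E[YZ/\bar E_+]$, and weak duality yields $\E[YZ / \bar E] \leq \E[YZ / \bar E_+]$ for every $\bar E$ feasible in either problem.

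The main obstacle is the treatment of ties at the threshold when the conditional distribution of $Y$ given $(X, Z = 1)$ assigns positive mass to $Q_\tau(X,1)$. In that case (\ref{cutoff_minus})–(\ref{cutoff_plus}) do not specify $\bar E_{\pm}$ on $\{Y = Q_\tau(X,1)\}$, and an intermediate-value / measurable-selection argument is needed to pick a value on this atom so that $\E[Z / \bar E_\pm \mid X] = 1$ holds with equality. Since the Lagrangian coefficient $(Y - Q_\tau(X,1))Z$ vanishes on the atom, this tie-breaking does not affect the pointwise maximization step, and the rest of the proof is standard linear-programming duality.
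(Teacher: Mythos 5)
Your proposal is correct and follows essentially the same route as the paper's proof: the existence/tie-breaking step via the intermediate value theorem matches the paper's construction of $\bar{e}_{\gamma(x)}$, and your Lagrangian/weak-duality argument is exactly the paper's add-and-subtract decomposition $\E[YZ/\bar{E}] = \E[Q_{\tau}(X,1)Z/\bar{E}] + \E[(Y-Q_{\tau}(X,1))Z/\bar{E}]$ followed by pointwise maximization of the second term over the box constraints. The only cosmetic difference is that you treat feasibility in the two problems symmetrically through the Lagrangian, whereas the paper concludes optimality for (\ref{psi_plus_variational}) by noting it is a more constrained problem in which $\bar{E}_+$ remains feasible.
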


The form of the propensity score $\bar{E}_+$ gives us insight into the confounding structure which maximizes $\psi_\T$:  in the worst case, all observations with ``high" values of $Y$ are unlikely to be treated and thus receive large propensity weight, while all observations with ``low" values of $Y$ are likely to be treated and thus receive small propensity weight.  The cutoff between high and low is chosen to satisfy the data-compatibility condition $\E[ Z / \bar{E}_+ | X ] = 1$.  

This argument presented in this section extends immediately to $\psi_{\C}$ by swapping treatment and control labels, extends to $\psi_{\textup{ATT}}$ by the argument given in Section \ref{section:balancing_bounds_simpler}, and can extend to other sensitivity models of the form $e_{\min}(X) \leq e_0(X, U) \leq e_{\max}(X)$ by modifying the constraints of (\ref{x_specific_problem}).

\subsection{Data compatibility for the ATE}\label{section:explaining_bounds_ATE}

To extend the argument from Section \ref{section:explaining_bounds_APO} to the ATE requires additional care.  Although $\psi_{\textup{ATE}}^+ = \psi_{\textup{T}}^+ - \psi_{\textup{C}}^-$ is certainly a \textit{valid} upper bound for the partially identified set for $\psi_{\textup{ATE}}$, it is not obviously a sharp one.  Proposition \ref{proposition:data_compatibility_psiT} only implies that there exists a distribution $\QSample$ matching the observed-data distribution which has $\E_{\QSample}[Y(1)] = \psi_{\textup{T}}^+$ and another distribution $\QSample'$ which has $\E_{\QSample'}[ Y(0)] = \psi_{\textup{C}}^-$, but these distributions need not be the same.  In other words, the two bounds may not be simultaneously achievable.

Theorem \ref{theorem:psiATE_identified_set} indicates that the worst-case bounds on the counterfactual means are simultaneously achievable in the marginal sensitivity model.  This is a surprising result, given that simultaneous achievability is \emph{not} expected to hold in the closely-related Rosenbaum sensitivity model.  In that model, \cite{yadlowsky2018bounds} derived sharp bounds on $\psi_{\textup{T}}$ and $\psi_{\textup{C}}$ but required an extra symmetry assumption on the distribution of potential outcomes to establish sharpness of the resulting ATE bounds. 

The key to our bounds on $\psi_{\textup{ATE}}$ is the following claim, which strengthens Proposition \ref{proposition:data_compatibility_psiT}.

\begin{proposition}
\label{proposition:data_compatibility_ATE}
\emph{\textbf{(Simultaneous achievability)}}\\
For any random variable $\bar{E} \in \mathcal{E}_\infty(\Lambda)$ satisfying $\E [Z / \bar{E} | X] = \E [ (1-Z)/(1-\bar{E}) | X] = 1$, there is a distribution $\QSample$ for the full data $(X, Y(0), Y(1), Z, U)$ with the following properties:
\begin{enumerate}[label=(\roman*),topsep=0pt,itemsep=-1ex]
\item The distribution of the observables $(X, Y, Z)$ is the same under $\PSample$ and $\QSample$.\label{item:data_compatibility_ATE:data_compatible}
\item $\QSample$ satisfies Assumption \ref{assumption:msm}.\label{item:data_compatibility_ATE:MSM}
\item $\E_{\QSample}[Y(1)] = \E_{\PSample}[ YZ/\bar{E}]$ and $\E_{\QSample}[Y(0)] = \E_{\PSample}[ Y(1-Z)/(1-\bar{E})]$.\label{item:data_compatibility_ATE:Means}
\end{enumerate}
\end{proposition}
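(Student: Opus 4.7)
The plan is to construct a single joint distribution $\QSample$ on $(X, Y(0), Y(1), Z, U)$ that achieves both target counterfactual means simultaneously. The key difficulty compared to Proposition \ref{proposition:data_compatibility_psiT} is that the naive choice $U = (Y(0), Y(1))$ with an independent coupling of the potential outcomes given $X$ yields odds-ratio bounds of $\Lambda^2$ rather than $\Lambda$, since two factors multiply; a more careful choice of the latent $U$ is required. I plan to use a one-dimensional binary latent variable playing the role of the true propensity score.

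First, without loss of generality I would replace $\bar{E}$ by $\E[\bar{E} \mid X, Y, Z]$, making $\bar{E}$ a measurable function of the observables; this preserves both balancing conditions by the tower property and preserves the MSM bound because the odds-ratio functional is monotone. Write $\bar{e}_j(x, y) := \E[\bar{E} \mid X = x, Y = y, Z = j]$ for $j \in \{0, 1\}$. Following the tilting argument used in the proof of Proposition \ref{proposition:data_compatibility_psiT}, define candidate $\QSample$-marginals of the potential outcomes given $X$ by
\begin{align*}
\pi_1(y \mid x) &:= \frac{e(x)}{\bar{e}_1(x, y)} \, \PSample(Y = y \mid X = x, Z = 1), \\
\pi_0(y \mid x) &:= \frac{1 - e(x)}{1 - \bar{e}_0(x, y)} \, \PSample(Y = y \mid X = x, Z = 0).
\end{align*}
The two balancing conditions imply that $\pi_j$ integrate to one, and a direct calculation shows that $\int y \, \pi_j(y \mid X) \, dy$ averaged over $\PSample_X$ recovers the IPW expressions $\E_{\PSample}[YZ/\bar{E}]$ and $\E_{\PSample}[Y(1-Z)/(1-\bar{E})]$ respectively.

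Next, I would build $\QSample$ via a binary latent confounder $A$. Let $e_{\pm}(x) := \Lambda^{\pm 1} e(x) / (1 - e(x) + \Lambda^{\pm 1} e(x))$ be the MSM endpoints and let $A$ take values in $\{e_-(X), e_+(X)\}$ with mixing probability $p(X) := (e(X) - e_-(X))/(e_+(X) - e_-(X))$ on $e_+(X)$, so that $\E[A \mid X] = e(X)$. Set $Z \mid X, A \sim \text{Bern}(A)$, and take $(Y(0), Y(1))$ conditionally independent given $(X, A)$ with conditional densities $d_{\pm}^{(j)}(y \mid x) := \QSample(Y(j) = y \mid X = x, A = e_{\pm}(x))$ determined, for each $j$, by the $2 \times 2$ linear system requiring (a) that the $A$-marginal of $Y(j) \mid X$ equal $\pi_j$ and (b) that the $A \mid Z = j$-weighted mixture of $d_{\pm}^{(j)}$ equal $\PSample(Y \mid X, Z = j)$. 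Finally set $U := A$.

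It then remains to verify the three claims: (i) the observables have the $\PSample$-distribution by construction of the linear system; (ii) the MSM holds because $e_0(X, A) = A \in [e_-(X), e_+(X)]$ and $(Y(0), Y(1)) \indep Z \mid (X, A)$ by the conditional-independence structure; and (iii) the target counterfactual means follow since the $\QSample$-marginal of $Y(j)$ given $X$ equals $\pi_j$. The main obstacle lies in showing that the $d_{\pm}^{(j)}$ are bona fide probability densities: non-negativity requires exactly the MSM bound $\bar{e}_j(X, Y) \in [e_-(X), e_+(X)]$, while the normalization of $d_{\pm}^{(j)}$ reduces to the balancing condition for the $j$-th treatment group on $\bar{E}$. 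Both hypotheses are thus essential and play distinct roles, which explains why the simultaneous-achievability result does not survive in the related Rosenbaum sensitivity model studied by Yadlowsky et al.~\cite{yadlowsky2018bounds}.
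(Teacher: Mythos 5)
Your construction is correct in its essentials but takes a genuinely different route from the paper. The paper does not handle an arbitrary $\bar{E}$ directly: it constructs only the four extremal distributions $\QSample_{\pm,\pm}$ attaining the corners of $[\psi_{\T}^-,\psi_{\T}^+]\times[\psi_{\C}^-,\psi_{\C}^+]$ --- using a binary confounder $U$ obtained by thresholding $Y$ at $Q_{\tau}(X,1)$ on $\{Z=1\}$ and at $Q_{1-\tau}(X,0)$ on $\{Z=0\}$ (with auxiliary uniforms to handle ties) --- and then realizes every other achievable pair of counterfactual means as a multinomial mixture of the four corners, augmenting $U$ with the mixture label. You instead build, for each admissible $\bar{E}$, a single two-point latent variable $A\in\{e_-(X),e_+(X)\}$ and solve a $2\times 2$ linear system for the conditional law of each potential outcome given $A$; the system is invertible for $\Lambda>1$, its solution is nonnegative precisely because of the odds-ratio bound and normalizes precisely because of the balancing conditions, exactly as you observe. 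Your route is more direct (no detour through the corners, no convexity/mixture step) and makes transparent where each hypothesis enters; the paper's route yields the extremal distributions in explicit form, which is what is actually invoked downstream in the proof of Theorem \ref{theorem:psiATE_identified_set}.

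One step of your write-up would fail as written. Replacing $\bar{E}$ by $\E[\bar{E}\mid X,Y,Z]$ does \emph{not} preserve the balancing conditions: those conditions constrain $\E[Z/\bar{E}\mid X]$ and $\E[(1-Z)/(1-\bar{E})\mid X]$, and the tower property does not commute with the reciprocal. By Jensen, $1/\E[\bar{E}\mid X,Y,Z]\le\E[1/\bar{E}\mid X,Y,Z]$, so with your definition of $\bar{e}_1$ the measure $\pi_1(\cdot\mid x)$ integrates to at most one, with equality only when $\bar{E}$ was already $\sigma(X,Y,Z)$-measurable; the target means $\E_{\PSample}[YZ/\bar{E}]$ and $\E_{\PSample}[Y(1-Z)/(1-\bar{E})]$ are likewise not preserved. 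The repair is the device used in the paper's proof of Corollary \ref{corollary:variational_problems}: define $\tilde{E}$ piecewise by $1/\tilde{E}=\E[1/\bar{E}\mid X,Y,Z]$ on $\{Z=1\}$ and $1/(1-\tilde{E})=\E[1/(1-\bar{E})\mid X,Y,Z]$ on $\{Z=0\}$. This $\tilde{E}$ is $\sigma(X,Y,Z)$-measurable, remains in $[e_-(X),e_+(X)]$ because that constraint is an interval condition on the reciprocals and is preserved under conditional averaging, and preserves both balancing conditions and both target means exactly. With $\bar{e}_1$ and $\bar{e}_0$ defined from $\tilde{E}$ in this way, the rest of your argument goes through.
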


Unlike Proposition \ref{proposition:data_compatibility_psiT}, this result does not follow from the existing data-compatibility characterizations of \cite{BirminghamJRSSB, Robins_etal_2000, tan2006, zsb2019} and instead requires an original construction.  Given this result, one can derive Theorem \ref{theorem:psiATE_identified_set} as a consequence of Theorem \ref{theorem:psiT_identified_set} and Corollary \ref{corollary:psiC_att_identified_set}.

\section{Sensitivity analysis} \label{section:sensitivity_analysis}

In this section, we give our proposals for translating the population-level partial identification results of Section \ref{section:partial_identification} into practical sensitivity analyses.  Our main proposal, which we call the \emph{quantile balancing} method, conducts a sensitivity analysis for IPW estimators by modifying the ZSB proposal to incorporate the sufficient constraints derived in Section \ref{section:balancing_bounds_simpler}.  We also discuss extensions of our sensitivity analysis to the AIPW estimator of \cite{robins_rotnitzky_zhao1994} which are simpler to implement but only sharp under homoscedasticity.

Throughout this section, we take $\Lambda \geq 1$ to be fixed and set $\tau = \Lambda/(\Lambda + 1)$.

\subsection{Sensitivity analysis via quantile balancing} \label{section:introducing_qb}

We begin by describing our IPW sensitivity analysis for the average treated potential outcome.  Theorem \ref{theorem:psiT_identified_set} implies that the largest value of $\psi_{\textup{T}}$ compatible with Assumption \ref{assumption:msm} solves the optimization problem (\ref{population_optimization}):
\begin{align}
\psi_{\textup{T}}^+ &= \max_{\bar{E} \in \mathcal{E}_{\infty}(\Lambda)} \frac{\E[ YZ / \bar{E}]}{\E[ Z / \bar{E}]} \quad \text{s.t.} \quad 
\binom{\E[ Q_{\tau}(X, 1) Z / \bar{E}]}{\E[ Z / \bar{E}]} = \binom{\E [ Q_{\tau}(X, 1) Z / e(X)]}{\E[ Z / e(X)]}.
\label{population_optimization}
\end{align}
In the above display, we have included an additional constraint $\E[ Z / \bar{E}] = \E[ Z / e(X)]$ which motivates our finite-sample procedure without affecting the optimization problem value.

Our proposal is to estimate $\psi_{\textup{T}}^+$ by replacing all of the unknown quantities in (\ref{population_optimization}) with empirical counterparts.  We estimate $\psi_{\textup{T}}^-$ by following the same principle.  To translate these estimates into confidence intervals, we employ the same simple percentile bootstrap scheme as ZSB.

We will be concrete about what optimization problem we are proposing to solve.  Let $\hat{Q}_{\tau}(x, z)$ be an estimate of the conditional quantile function of $Y$ obtained by some kind of quantile regression (e.g. \cite{generalized_random_forests, koenker_bassett_1978, quantile_random_forest, stone1977}).  Let $\hat{e}$ be the data analyst's estimate of the nominal propensity score $e$ from their primary analysis.  We define $\hat{\psi}_{\textup{T}}^+$ as the solution to the empirical maximization problem (\ref{qbalance_psiT}).
\begin{align}
    \hat{\psi}_{\textup{T}}^+ &= \max_{\bar{e} \in \mathcal{E}_n(\Lambda)} \frac{\E_n[YZ/\bar{e}]}{\E_n[Z/\bar{e}]} \quad \text{s.t.} \quad \binom{\E_n[ \hat{Q}_{\tau}(X, 1) Z / \bar{e}]}{\E_n[Z / \bar{e}]} = \binom{\E_n[\hat{Q}_{\tau}(X, 1) Z / \hat{e}(X)]}{\E_n[Z / \hat{e}(X)]} \label{qbalance_psiT}
\end{align}
The lower bound $\hat{\psi}_{\textup{T}}^-$ is defined similarly, but with maximization replaced by minimization and $\hat{Q}_{\tau}(x, z)$ replaced by another quantile estimate $\hat{Q}_{1 - \tau}(x, z)$.  We call $\hat{\psi}_{\T}^+$ and $\hat{\psi}_{\T}^-$ the \emph{quantile balancing bounds} for $\psi_{\T}$.

Two features of this proposal require some explanation.  The first feature to explain is the inclusion of the constraint $\E_n[Z / \bar{e}] = \E_n[ Z / \hat{e}(X)]$ in (\ref{qbalance_psiT}).  At the population level, Theorem \ref{theorem:psiT_identified_set} shows that only the constraint $\E[ Q_{\tau}(X, 1) Z / \bar{E}] = \E[ Q_{\tau}(X, 1) Z / e(X)]$ is relevant.  However, in finite samples, this additional constraint improves robustness when $\hat{Q}_{\tau}$ is an inaccurate estimate of $Q_{\tau}$ and also simplifies the associated computation.  The second feature to explain is why the right-hand side of the constraints in (\ref{qbalance_psiT}) have an ``IPW" form (i.e. $\E_n[ \hat{Q}_{\tau}(X, 1) Z / \hat{e}(X)]$) rather than a ``sample average" form (i.e. $\E_n[ \hat{Q}_{\tau}(X, 1)]$). If $\E_n[ \hat{Q}_{\tau}(X, 1) Z/ \hat{e}(X)] \neq \E_n[ \hat{Q}_{\tau}(X, 1)]$, then a sample average version of (\ref{qbalance_psiT}) may have no feasible propensities. With the IPW form, $\bar{e}_i = \hat{e}(X_i)$ is always feasible.

Now that we have explained our proposed sensitivity analysis, we will collect several immediate properties of the quantile balancing bounds:\\

\begin{enumerate}[label=(\roman*),topsep=0pt,itemsep=-1ex]
\item When $\Lambda = 1$ (i.e. no confounding is allowed), the quantile balancing bounds collapse to the usual IPW estimate of $\psi_{\textup{T}}$ under unconfoundedness. \label{prop:nest_ipw} 
\item The quantile balancing bounds are sample bounded, i.e. $\min_i Y_i \leq \hat{\psi}_{\textup{T}}^- \leq \hat{\psi}_{\textup{T}}^+ \leq \max_i Y_i$.\label{prop:SampleBound}
\item The quantile balancing bounds are always a subset of the ZSB bounds and, outside of knife-edge cases, are a strict subset.\label{prop:ZSBSubset}
\item The optimization problem (\ref{qbalance_psiT}) is convex and can be solved efficiently.  In fact, it reduces to a standard quantile regression problem.  See Appendix A
for implementation details.\\
\end{enumerate}
The property \ref{prop:nest_ipw} leads us to call quantile balancing a ``sensitivity analysis for IPW." One can also apply quantile balancing to unstabilized IPW estimators at the cost of properties \ref{prop:SampleBound} and \ref{prop:ZSBSubset}. See Appendix B for computational details, including for Augmented IPW estimators.

The quantile balancing idea extends easily to other causal estimands.  To compute bounds for $\psi_{\textup{C}}$, one only needs to exchange the definitions of ``treated" and ``control" and solve the same optimization problem.  Subtracting the bounds for $\psi_{\textup{T}}$ and $\psi_{\textup{C}}$ gives bounds for $\psi_{\textup{ATE}}$, and bounds for $\psi_{\textup{ATT}}$ follow from a similar principle (see Appendix A 
for the exact formula). 

To form confidence intervals based on quantile balancing, we follow \cite{zsb2019} and propose using the percentile bootstrap.  If $[\hat{\psi}_b^-, \hat{\psi}_b^+]$ are quantile balancing bounds estimated in the $b^{\text{th}}$ of $B$ bootstrap samples, we report the quantile balancing $1 - \alpha$ confidence interval as:
\begin{align}
\textup{CI}(\alpha) = [ Q_{\alpha/2} ( \{ \hat{\psi}_b^- \}_{b \in [B]}), Q_{1 - \alpha/2}( \hat{\psi}_b^+ \}_{b \in [B]})]. \label{ci}
\end{align}
As is standard for bootstrap-based IPW inference, we require re-estimating the nominal propensity score separately in each bootstrap replication. That requirement does not extend to the conditional quantiles.  While the conditional quantiles can be re-estimated within bootstraps, our inference results will also apply if they are taken from the main dataset.  This helps keep inference computationally tractable.

\subsection{Implications for AIPW sensitivity analysis} \label{section:balance_aipw}

The quantile balancing sensitivity analysis described above requires the data analyst to perform several quantile regressions.  Our partial identification results imply that, in certain ``additive-noise" data generating processes, a data analyst whose primary analysis was conducted using the AIPW estimator can perform sharp sensitivity analysis without performing any quantile regressions.

To explain how, we begin by describing the modeling assumption.  Suppose the observed outcome $Y$ has the following signal-plus-noise representation:
\begin{align}
Y = \mu(X, Z) + \epsilon \quad \text{with} \quad \E[ \epsilon ] = 0, \epsilon \, \indep \, (X, Z). \label{eq:signal_plus_noise}
\end{align}
Such models frequently arise in the regression applications \citep[see, e.g.][Chapter 3]{esl2001} and fit quite well in the real-data example we present in Section \ref{section:real_data} below.

The additive-noise assumption (\ref{eq:signal_plus_noise}) implies that the conditional quantiles of the residuals $\epsilon$ are constant. In particular, the assumption implies $Q_\tau(x, z) = \mu(x, z) + Q_\tau(\epsilon)$, where $Q_\tau(\epsilon)$ is the $\tau$-th quantile of the noise.  Therefore, Theorem \ref{theorem:psiT_identified_set} and some algebra imply that the sharp upper bound for $\psi_{\T}$ has the following formula:
\begin{align}
\psi_{\T}^+ &= \max_{\bar{E} \in \mathcal{E}_{\infty}(\Lambda)} \left\{ \E[ \mu(X, 1) + \frac{\E[ (Y - \mu(X, 1)) Z / \bar{E}]}{\E[ Z / \bar{E}]} \right\} \quad \text{s.t.} \quad \E[Z/\bar{E}] = \E[Z/e(X)].
\end{align}
Similar formulas can be derived for $\psi_{\T}^-, \psi_{\C}^+, \psi_{\C}^-$.  This formula is convenient after an AIPW primary analysis, which requires estimates of all the nuisance parameters in this equation.

A natural estimate of $\psi_{\T}^+$ is the finite-sample analogue of (\ref{eq:additive_noise}).
\begin{align}
\hat{\psi}_{\textup{T,AIPW}}^+ &= \max_{\bar{e} \in \mathcal{E}_n(\Lambda)} \left\{ \E_n[ \hat{\mu}(X, 1)] + \frac{\E_n [ (Y - \hat{\mu}(X, 1)) Z / \bar{e}]}{\E_n[Z/\bar{e}]} \right\} \quad \text{s.t.} \quad \E_n[Z/\bar{e}] = \E_n[Z/\hat{e}(X)] \label{qbalance_aipw}
\end{align}
The estimated bound $\hat{\psi}_{\textup{T,AIPW}}^+$ grows with $\Lambda$ and recovers the original (stabilized) AIPW estimator when $\Lambda = 1$.  One can also not divide by $\E_n[Z/\bar{e}]$ in (\ref{qbalance_aipw}) to recover the unstabilized AIPW estimator at $\Lambda = 1$.

The estimator (\ref{qbalance_aipw}) slightly modifies the proposal in Section 6.2 of \cite{zsb2019} to include the balancing constraint $\E_n[ Z / \bar{e}] = \E_n[Z / \hat{e}(X)]$.  In theory, this constraint is necessary to achieve sharpness in the additive-noise model (\ref{eq:signal_plus_noise}).  However, the simulations presented in Section \ref{section:numerical_examples} find that when the additive-noise model holds, this constraint scarcely refines the stabilized point estimates while somewhat degrading the coverage of bootstrap confidence intervals.

\subsection{Theoretical properties}

We now state some theoretical properties of the quantile balancing bounds $[\hat{\psi}^-, \hat{\psi}^+]$ which apply when the outcome $Y$ has a continuous distribution.  In short, the bounds are sharp when quantiles are estimated consistently and are valid even when quantiles are estimated inconsistently.  Moreover, the percentile bootstrap yields valid confidence intervals if standard IPW inference conditions are satisfied and quantiles are estimated parametrically.

To obtain these results, we need a few conditions.  The first condition collects some standard IPW consistency requirements which we expect the data analyst to have already assumed in their primary analysis.

\begin{condition}
\label{condition:ipw_conditions}
\textup{\textbf{(IPW assumptions)}}\\
The nominal propensity score $e$ satisfies $\varepsilon \leq e(X) \leq 1 - \varepsilon$ with probability one for some $\varepsilon > 0$.  The estimated nominal propensity score $\hat{e}(\cdot) \equiv \hat{e}( \cdot, \{ X_i, Z_i \}_{i \leq n} )$ is uniformly consistent, and the variance of $Y$ is finite.
\end{condition}

The second condition requires that the outcome $Y$ has a bounded conditional density which is positive near the relevant conditional quantiles.  This is a common identification condition for quantile regression \citep{generalized_random_forests, BelloniEtAl2019}.  However, it means our theoretical guarantees do not apply when $Y$ is discrete.

\begin{condition}
\label{condition:density}
\textup{\textbf{(Density)}}\\
The conditional distribution of $Y \mid X, Z$ has a uniformly bounded density $f(y | x, z)$.  For each $(x, z) \in \X \times \{ 0, 1 \}$, the map $y \mapsto f( y | x, z)$ is continuous and positive near $Q_{1 - \tau}(x, z)$ and $Q_{\tau}(x, z)$.
\end{condition}

Finally, we make some assumptions about how the quantiles are estimated. For the standard linear quantile regression method of \cite{koenker_bassett_1978}, one only needs to check that the regressors in the quantile regression have finite variance.  We cover generic (possibly nonlinear) methods by requiring sample splitting to avoid overfitting.  The specific form of sample splitting analyzed in our proofs is ``cross-fitting" \citep{schick1986, newey_robins_crossfitting, doubleML}, but leave-one-out or out-of-bag quantile estimates perform similarly in simulations. Based on our practical experience, we recommend using some kind of sample splitting even when the quantile model is linear.

\begin{condition}
\label{condition:quantile_estimates}
\textup{\textbf{(Quantile estimates)}}\\
For each $t \in \{ 1 - \tau, \tau \}$, one of the following holds for the estimated quantile function $\hat{Q}_t$:
\begin{enumerate}[label=(\roman*),topsep=0pt,itemsep=-1ex]
\item $\hat{Q}_t(x, z) = \hat{\beta}_t(z)^{\top} h(x)$ for some fixed ``features" $h_j(X)$ with finite variance. \label{linear}
\item $\hat{Q}_t(x, z)$ is estimated using cross-fitting and satisfies Condition N in the supplementary materials. \label{crossfit}
\end{enumerate}
\end{condition}

Condition \ref{condition:quantile_estimates} is essentially ``algorithmic," and neither \ref{linear} nor \ref{crossfit}  impose any accuracy requirements on the estimated conditional quantiles.  The appendix conditions in \ref{crossfit} are technical to state but very mild.  Under Conditions \ref{condition:ipw_conditions} and \ref{condition:density}, they are satisfied by quantile estimates based on nearest-neighbors \citep{stone1977}, kernels \citep{kernel_quantile1990}, and random forests \citep{generalized_random_forests, quantile_random_forest}.

Under these conditions, we have the following result on the asymptotic sharpness of the quantile balancing bounds.

\begin{theorem}
\label{theorem:sharpness}
\textup{\textbf{(Sharpness and robustness)}}\\
For any $\psi_0 \in \{ \psi_{\textup{T}}, \psi_{\textup{C}}, \psi_{\textup{ATT}}, \psi_{\textup{ATE}} \}$, let $[\psi^-, \psi^+]$ be its partially identified interval under Assumption \ref{assumption:msm} and let $[ \hat{\psi}^-, \hat{\psi}^+]$ be the corresponding quantile balancing interval.  Assume Conditions \ref{condition:ipw_conditions}, \ref{condition:density}, and \ref{condition:quantile_estimates}.  
\begin{enumerate}[label=(\roman*),topsep=0pt,itemsep=-1ex]
\item If the quantile regression estimates are consistent, then $\hat{\psi}^- \xrightarrow{p} \psi^-$ and $\hat{\psi}^+ \xrightarrow{p} \psi^+$. \label{sharpness}
\item Even if the quantile models are misspecified, we still have $\hat{\psi}^- \leq \psi^- + a_n$ and $\psi^+ - b_n \leq \hat{\psi}^+$, where $a_n = o_P(1)$ and $b_n = o_P(1)$. \label{robustness}
\end{enumerate}
\end{theorem}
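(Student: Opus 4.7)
The plan is to reduce to bounding $\hat{\psi}_{\T}^+$; the other bounds follow by label-swapping (Corollary \ref{corollary:psiC_att_identified_set}) and subtraction (Theorem \ref{theorem:psiATE_identified_set}). Let $Q^*(\cdot, 1)$ denote the probability limit of $\hat{Q}_\tau(\cdot, 1)$ under $\PSample$: under Condition \ref{condition:quantile_estimates}\ref{linear} this is the population linear-quantile-regression pseudo-true value (well defined by Condition \ref{condition:density}); under Condition \ref{condition:quantile_estimates}\ref{crossfit} its existence is built into Condition \ref{condition:nonlinear_regularity}. Let $\psi_{\T}^+(Q^*)$ denote the value of the population quantile-balancing problem obtained by substituting $Q^*$ for $Q_\tau$ in the constraint of (\ref{psiT_plus}). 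The theorem will follow from two ingredients: (A) $\psi_{\T}^+(Q^*) \geq \psi_{\T}^+$ for any $Q^*$, with equality when $Q^* = Q_\tau$; and (B) $\hat{\psi}_{\T}^+ \xrightarrow{p} \psi_{\T}^+(Q^*)$.

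Ingredient (A) is immediate from a feasible-set comparison. By Corollary \ref{corollary:variational_problems}, $\psi_{\T}^+ = \max \{ \E[YZ/\bar{E}] : \bar{E} \in \mathcal{E}_\infty(\Lambda),\ \E[Z/\bar{E} \mid X] = 1 \}$. Any $\bar{E}$ in that feasible set satisfies $\E[h(X) Z/\bar{E}] = \E[h(X)]$ for every integrable $h$, hence the single quantile-balancing constraint $\E[Q^*(X,1) Z/\bar{E}] = \E[Q^*(X,1)]$ in particular. So its feasible set is contained in that of the $\psi_{\T}^+(Q^*)$ problem, giving $\psi_{\T}^+(Q^*) \geq \psi_{\T}^+$. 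When $Q^* = Q_\tau$, Theorem \ref{theorem:psiT_identified_set} gives equality, which yields the sharpness claim \ref{sharpness}, while the general inequality yields the robustness claim \ref{robustness} once (B) is established.

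Ingredient (B) uses the Dantzig--Wald structure already invoked in Section \ref{section:explaining_bounds_APO}. Both the sample and population QB optima are attained by weights of the threshold form $1/\bar{E} = 1 + \tfrac{1-e(X)}{e(X)}[\Lambda\, \mathbb{1}(Y > q) + \Lambda^{-1}\, \mathbb{1}(Y \leq q)]$, where the threshold $q$ is determined so the single linear constraint binds. Writing out the Lagrangian of (\ref{qbalance_psiT}) with multiplier $\lambda$ on the balancing constraint shows that $\lambda$ solves a (weighted) standard quantile-regression dual with $\hat{Q}_\tau(X,1)$ as regressor—this is the reduction noted in Appendix \ref{appendix:computation}. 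By an envelope argument, it then suffices to show the empirical dual multiplier converges in probability to its population counterpart and that the resulting weighted averages converge to their expectations. Under Condition \ref{condition:quantile_estimates}\ref{linear}, the dual is finite-dimensional and convex, so consistency follows from a standard ULLN using the finite-variance features $h(X)$; under Condition \ref{condition:quantile_estimates}\ref{crossfit}, cross-fitting lets us treat $\hat{Q}_\tau$ as externally fixed within each held-out fold, reducing the problem to a conditional ULLN on the nonrandom (given the nuisance fold) function class. Condition \ref{condition:ipw_conditions} controls the nuisance $\hat{e}$ throughout, and Condition \ref{condition:density} ensures the map $q \mapsto \PSample(Y \leq q \mid X, Z=1)$ is continuous near the relevant quantiles so that small perturbations of the threshold translate into small perturbations of the objective.

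The main obstacle is (B). The optimal weights involve indicators $\mathbb{1}(Y > q)$, so the QB objective is discontinuous in $q$ pathwise; continuity only survives under expectation, and this is exactly what Condition \ref{condition:density} buys us near the relevant quantile. Moreover, case \ref{crossfit} of Condition \ref{condition:quantile_estimates} forces us to work with a data-dependent, possibly nonparametric $\hat{Q}_\tau$, which requires the cross-fitting step to decouple the randomness in the nuisance from that in the balancing sample and the regularity conditions in Appendix \ref{section:nonlinear_proof} to guarantee $\hat{Q}_\tau \to Q^*$ in a sense strong enough to propagate through the threshold.
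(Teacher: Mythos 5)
Your population-level ingredient (A) is exactly the mechanism the paper uses: the single-constraint problem is a relaxation of the variational problem in Corollary \ref{corollary:variational_problems}, so balancing \emph{any} function of $X$ gives a value at least $\psi_{\T}^+$, with equality when that function is $Q_{\tau}(\cdot,1)$ by Theorem \ref{theorem:psiT_identified_set}. The gap is in ingredient (B), and it is genuine. Your robustness argument requires a probability limit $Q^*$ for $\hat{Q}_{\tau}$, and you assert that in the cross-fitting case its existence is ``built into Condition \ref{condition:nonlinear_regularity}.'' It is not: that condition only requires $\max_i |\hat{Q}_i| = o_P(n^{\alpha})$ and a separation property, and imposes no convergence of $\hat{Q}_{-k}$ to anything. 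Claim \ref{robustness} must hold for arbitrary, possibly non-convergent quantile estimates, so (B) cannot even be formulated in that regime. (Even in the linear misspecified case, Condition \ref{condition:density} only controls the density near the \emph{true} conditional quantiles, so uniqueness of the pseudo-true quantile-regression coefficient is not guaranteed by the stated hypotheses.) The paper avoids needing a limit altogether: in the linear case it lower-bounds $\hat{\psi}_{\T}^+$ by the feature-balancing estimator that balances $h(x)$ \emph{together with} the true quantile $Q(x)$, which is automatically well-specified; in the nonlinear case it constructs, within each fold, an explicitly feasible propensity vector $\bar{e}^*$ perturbing the idealized worst-case weights $1+\Lambda^{U_i}(1-\hat{e}_i)/\hat{e}_i$ just enough to satisfy the empirical balancing constraints exactly while moving the objective by only $o_P(1)$.

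Second, even where $Q^*$ exists, your sketch of (B) skips the central difficulty: the empirical problem has exact equality constraints, and the population-optimal or idealized weights are generically infeasible in it, so you cannot lower-bound the empirical optimum by evaluating the objective at them. The paper resolves this in the linear case with Proposition \ref{proposition:characterizing_optimum} --- an exact dual representation of the empirical optimum via weighted quantile regression --- which yields the two closed-form expressions of Lemmas \ref{lemma:upper_bound} and \ref{lemma:lower_bound} differing only in a sign vector, followed by a Glivenko--Cantelli argument over the class $\{f_{\beta}\}$. In the nonlinear case the dual route fails (the coefficient from regressing $Y$ on $(1,\hat{Q})$ is uncontrolled when $\hat{Q}$ has no limit), and the paper instead performs the primal inlier/outlier construction in Appendix \ref{section:nonlinear_proof}, which is precisely where the separation requirement of Condition \ref{condition:nonlinear_regularity} is consumed. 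An ``envelope argument'' plus a ``conditional ULLN'' does not substitute for either step; to repair the proposal you would need to either reproduce the feasibility-repairing construction or restate (B) as a one-sided bound anchored at a feasible point rather than at a population limit.
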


The same conclusions hold for the AIPW-based bounds introduced in Section \ref{section:balance_aipw} when the outcome regression is estimated by linear regression, i.e. sharpness under an additive-noise model and validity in general. However, while AIPW is doubly-robust under unconfoundedness, the validity of the corresponding AIPW quantile balancing bounds relies on correct specification of the nominal propensity score.

The result \ref{robustness} shows that even when quantiles are not estimated consistently, the quantile balancing bounds are still valid;  we will offer some intuition on why this novel robustness property holds.  At the population level, the worst-case propensity score $\bar{E}_+$ defined in Proposition \ref{proposition:psiT_formulas} ``balances" all integrable function of $X$, so intuitively, we should expect that it ``nearly" balances the estimated quantile function $\hat{Q}_{\tau}(\cdot, 1)$ in finite samples even if $\hat{Q}_{\tau}(\cdot, 1)$ is not particularly close to $Q_{\tau}(\cdot, 1)$.  That suggests a vector of propensities very close to the true worst-case propensity vector will belong to the feasible set $\mathcal{E}_n(\Lambda)$.  Since the quantile balancing upper bound $\hat{\psi}_{\T}^+$ is defined as a maximum over the feasible set, it will be at least as large as a quantity close to $\psi_{\T}^+$.  This roughly explains why validity holds even under misspecification.

The validity of the confidence interval (\ref{ci}) follows under stronger parametric assumptions. We prove an inference result assuming the nominal propensity score is estimated by a correctly-specified parametric model and the conditional quantiles are estimated by a (potentially misspecified) parametric model. 

\begin{theorem}
\label{theorem:inference}
\textup{\textbf{(Inference)}}\\
Let $[ \psi^-, \psi^+]$ be as in Theorem \ref{theorem:sharpness}, and let $\textup{CI}(\alpha)$ be as in (\ref{ci}).  Suppose Conditions \ref{condition:ipw_conditions}, \ref{condition:density}, and \ref{condition:quantile_estimates}.\ref{linear} are satisfied, and also that the nominal propensity score is estimated by a regular parametric model (e.g. logistic regression).  Then we have
\begin{align}\label{eq:InferentialGuarantee}
\liminf_{n \rightarrow \infty} \Pprob( [\psi^-, \psi^+] \subseteq \textup{CI}(\alpha)) \geq 1 - \alpha
\end{align}
for any $\alpha \in (0, 1)$.
\end{theorem}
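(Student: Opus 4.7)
The plan is to show that each endpoint of the bootstrap interval $\textup{CI}(\alpha)$ is a one-sided, asymptotic $1-\alpha/2$ bound for the corresponding endpoint of $[\psi^-,\psi^+]$, then combine via a union bound. First I would exploit the LP duality of (\ref{qbalance_psiT}). Under linear quantile regression (Condition \ref{condition:quantile_estimates}.\ref{linear}), the empirical problem is the finite-sample analogue of the Dantzig--Wald problem solved in Proposition \ref{proposition:psiT_formulas}: at the optimum, each $1/\bar{e}_i$ takes one of the two extreme values allowed by the box constraint, with the cutoff determined by dual multipliers that depend smoothly on $(\hat{e}, \hat{\beta}_{\tau})$, where $\hat{\beta}_{\tau}$ is the quantile-regression coefficient. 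Continuity of $f(y\mid x,z)$ near the relevant conditional quantiles (Condition \ref{condition:density}) rules out ties and yields a Hadamard-differentiable representation of $\hat{\psi}_{\textup{T}}^+$ as a functional of $(\mathbb{P}_n, \hat{e}, \hat{\beta}_{\tau})$. Combining standard parametric asymptotics for $\hat{e}$ (correctly-specified MLE) and for $\hat{\beta}_{\tau}$ (linear quantile regression) with the delta method gives $\sqrt{n}(\hat{\psi}_{\textup{T}}^+ - \tilde{\psi}_{\textup{T}}^+) \rightsquigarrow N(0, \sigma_+^2)$ for some pseudo-limit $\tilde{\psi}_{\textup{T}}^+$, with $\tilde{\psi}_{\textup{T}}^+ \geq \psi_{\textup{T}}^+$ by Theorem \ref{theorem:sharpness}.\ref{robustness} and equality when the quantile model is correctly specified. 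Analogous statements hold for $\hat{\psi}_{\textup{T}}^-$, and the $\psi_{\textup{C}}$, $\psi_{\textup{ATT}}$, $\psi_{\textup{ATE}}$ versions follow from the label swap and arithmetic described in Section \ref{section:sensitivity_analysis}.

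\textbf{Bootstrap validity and coverage.} Since $\hat{\psi}^{\pm}$ is a Hadamard-differentiable functional of $(\mathbb{P}_n, \hat{e}, \hat{\beta}_{\tau})$ and each input admits a valid bootstrap approximation---the empirical bootstrap for $\mathbb{P}_n$ and the standard nonparametric bootstrap for regular parametric M- and Z-estimators---the functional delta method for the bootstrap yields conditional weak convergence $\sqrt{n}(\hat{\psi}_b^+ - \hat{\psi}^+) \rightsquigarrow N(0, \sigma_+^2)$ in probability. Consequently
\begin{align*}
Q_{1-\alpha/2}(\{\hat{\psi}_b^+\}) = \hat{\psi}^+ + z_{1-\alpha/2}\,\sigma_+/\sqrt{n} + o_P(n^{-1/2}),
\end{align*}
and the symmetric statement holds for $Q_{\alpha/2}(\{\hat{\psi}_b^-\})$. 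Writing
\begin{align*}
\Pprob\!\bigl(Q_{1-\alpha/2}(\{\hat{\psi}_b^+\}) < \psi^+\bigr) = \Pprob\!\bigl(\sqrt{n}(\psi^+ - \hat{\psi}^+) > z_{1-\alpha/2}\sigma_+ + o_P(1)\bigr),
\end{align*}
this probability tends to $\alpha/2$ when $\tilde{\psi}^+ = \psi^+$ and to $0$ when $\tilde{\psi}^+ > \psi^+$; either way, $\limsup \leq \alpha/2$. The symmetric argument for the lower endpoint together with a union bound then give $\liminf_{n} \Pprob([\psi^-, \psi^+] \subseteq \textup{CI}(\alpha)) \geq 1 - \alpha$.

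\textbf{Main obstacle.} The bulk of the work is in the first step: verifying that the LP value $\hat{\psi}^+$ is Hadamard-differentiable in $(\mathbb{P}_n, \hat{e}, \hat{\beta}_{\tau})$ despite the discontinuous ``bang-bang'' structure of the optimal weights. Condition \ref{condition:density} is essential here, since it ensures the dual cutoff is asymptotically well-separated from the observed $Y_i$'s, smoothing out the indicator functions that appear in the linearization (and explaining the paper's earlier comment that the continuity-of-outcome assumption seems inevitable for the inference theory). Once this representation is in hand, the remaining bootstrap and coverage arguments are routine applications of the functional delta method and standard percentile-bootstrap manipulations.
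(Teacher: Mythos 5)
Your high-level architecture --- one-sided bounds for each endpoint combined by a union bound, the Dantzig--Wald/dual representation of the LP from Proposition \ref{proposition:characterizing_optimum}, and the role of Condition \ref{condition:density} in smoothing the sign functions --- matches the paper's, and your final percentile-bootstrap manipulation is correct. The gap is in the step you yourself flag as the ``main obstacle'': your route requires a genuine CLT for the LP value $\hat{\psi}_{\T}^+$ around a pseudo-limit $\tilde{\psi}_{\T}^+$, together with bootstrap consistency for that limit law, \emph{even when the linear quantile model is misspecified}. The stated conditions do not deliver this. Condition \ref{condition:density} only guarantees that $f(y \mid x, 1)$ is continuous and positive near the \emph{true} conditional quantile $Q_{\tau}(x,1)$; under misspecification the dual cutoff sits at the pseudo-true linear predictor $\gamma^{*\top} g(x)$, where the density may vanish, so the Hessian of the population weighted check loss can be singular and the $\sqrt{n}$-asymptotics underlying your expansion of $Q_{1-\alpha/2}(\{\hat{\psi}_b^+\})$ can fail. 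There is also a two-stage complication you elide: the balanced feature in (\ref{qbalance_psiT}) is itself the output of a first-stage quantile regression, so treating $\hat{\psi}_{\T}^+$ as a Hadamard-differentiable functional of $(\PEmpirical, \hat{e}, \hat{\beta}_{\tau})$ requires propagating the sampling noise of $\hat{\beta}_{\tau}$ through the constraint set, not just through the objective.

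The paper never proves a CLT for $\hat{\psi}_{\T}^+$ at all; it exploits the one-sidedness of the claim through a chain of deterministic, sample-by-sample inequalities. The bootstrap quantile-balancing value is bounded below by the feature-balancing value (which balances all of $h(x)$ and therefore no longer depends on $\hat{\beta}_{\tau}$), which by the argument of Lemma \ref{lemma:lower_bound} is in turn bounded below by an explicit, non-implementable Z-estimator built from the \emph{oracle} signs $\textup{sign}(Y_i - Q(X_i))$. Standard bootstrap Z-estimator theory applies to that lower bound when $Q(x) = \beta_0^{\top} h(x)$; when the linear model is misspecified, one augments the balanced features with the true quantile $Q(x)$ itself, which restores correct specification while only shrinking the LP value further. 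One-sided coverage for the tractable lower bound then transfers to $\hat{\psi}_{\T}^+$ for free. To salvage your direct route you would need either to strengthen Condition \ref{condition:density} so that it holds at the pseudo-true quantile coefficient, or to adopt this sandwiching device --- which is the essential idea missing from your proposal.
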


We have found that these confidence intervals can under-cover the identified set in finite samples when the quantiles are correctly specified. In our simulations, the use of cross-fit conditional quantile estimates largely resolves the issue with minimal effect on point estimates, so we advocate for the use of such estimators in practice. Although we do not have theoretical support for the confidence interval $\text{CI}(\alpha)$ when quantiles are estimated by a nonlinear model, we find that approach performs reasonably well in the simulations of Section \ref{section:numerical_examples} as long as cross-fit quantiles are used.

\section{Numerical examples}\label{section:numerical_examples}

In this section, we illustrate the finite-sample performance of our proposed sensitivity analyses on several simulated datasets and one real-data example. 

\subsection{Simulated data}

We consider two data-generating processes (DGPs) in our simulated examples. The two DGPs differ in the conditional distribution of $Y$ given $(X, Z)$, but otherwise can be described as follows:
\begin{align}
\begin{split}
    X &\sim \text{Uniform}([-1, 1]^5)\\
    Z \mid X & \sim \text{Bernoulli} \left(  \tfrac{1}{1 + \exp(-\sum_{j = 1}^5 X_{j}/\sqrt{5})} \right)\\
    Y \mid X, Z &\sim \N( \mu(X), \sigma^2(X)).
\end{split}
\end{align}
In the first DGP, we use $\mu(x) = x_1 + \cdots + x_5$ and $\sigma(x) = 1$. In the second DGP, we use $\mu(x) = \tfrac{3}{2} \text{sign}(x_1) + \text{sign}(x_2)$ and $\sigma(x) = 2 + \text{sign}(x_3) + \text{sign}(x_4)$.  The estimand of interest is the ATE and we fix $\Lambda = 2$, i.e. unobserved confounders can double or halve the odds of treatment.

We compare five methods for obtaining bounds on the partially identified set:
\begin{enumerate}[itemsep=-1ex]
    \item \texttt{QB-Linear} applies the quantile balancing method of Section \ref{section:sensitivity_analysis} with quantiles estimated using linear quantile regression on $X_1, \ldots, X_5$.
    \item \texttt{QB-Forest} applies quantile balancing with quantiles estimated using the random forest method from \cite{generalized_random_forests}.
    \item \texttt{ZSB} applies the main IPW method from \cite{zsb2019}, described in Section \ref{section:sharpness}. 
    \item \texttt{ZSB-AIPW} applies the AIPW-based method from Section 6.2 of \cite{zsb2019}, described in Section \ref{section:balance_aipw}.  This requires an estimate of the outcome model $\mu(X, Z) = \E[Y | X, Z]$.  We use a situationally-appropriate outcome model, linear regression in DGP1 and random forest regression in DGP2.
    \item \texttt{AIPW+1} applies the AIPW-based method introduced in Section \ref{section:balance_aipw}.  We call this \texttt{AIPW+1} because it refines \texttt{ZSB-AIPW} to incorporate an additional ``one-balancing" constraint $\E_n[Z/\bar{e}] = \E_n[Z/\hat{e}(X)]$.
\end{enumerate}
All methods estimate the nominal propensity score by logistic regression.  We use 5-fold cross-fitting in all of our quantile regressions. We do not re-estimate quantiles or random forest models within bootstraps.

Figure \ref{fig:simulation} shows the distribution of upper and lower bound point estimates from each of these five methods, estimated using 2,000 simulations with $n = 1,000$ observations each. Simulations at other sample sizes are presented in Appendix B. 
Dashed lines indicate the true partially identified region.  The results conform to the asymptotic predictions of Section \ref{section:sensitivity_analysis}: (i) when the quantile models are ``correctly specified," the quantile balancing point estimates are nearly unbiased;  (ii) under misspecification, the range of \texttt{QB} point estimates is too wide rather than too narrow; (iii) the \texttt{ZSB} range of point estimates is too wide in both cases; and (iv) AIPW-based methods give nearly-sharp bounds in the additive-noise DGP1 but conservative bounds in the heteroscedastic DGP2. We also find that the \texttt{+1} constraint in \texttt{AIPW+1}, which is necessary for sharpness in theory, has minimal practical impact in either DGP.

\begin{figure}[!ht]
    \centering
    \includegraphics[width=15cm]{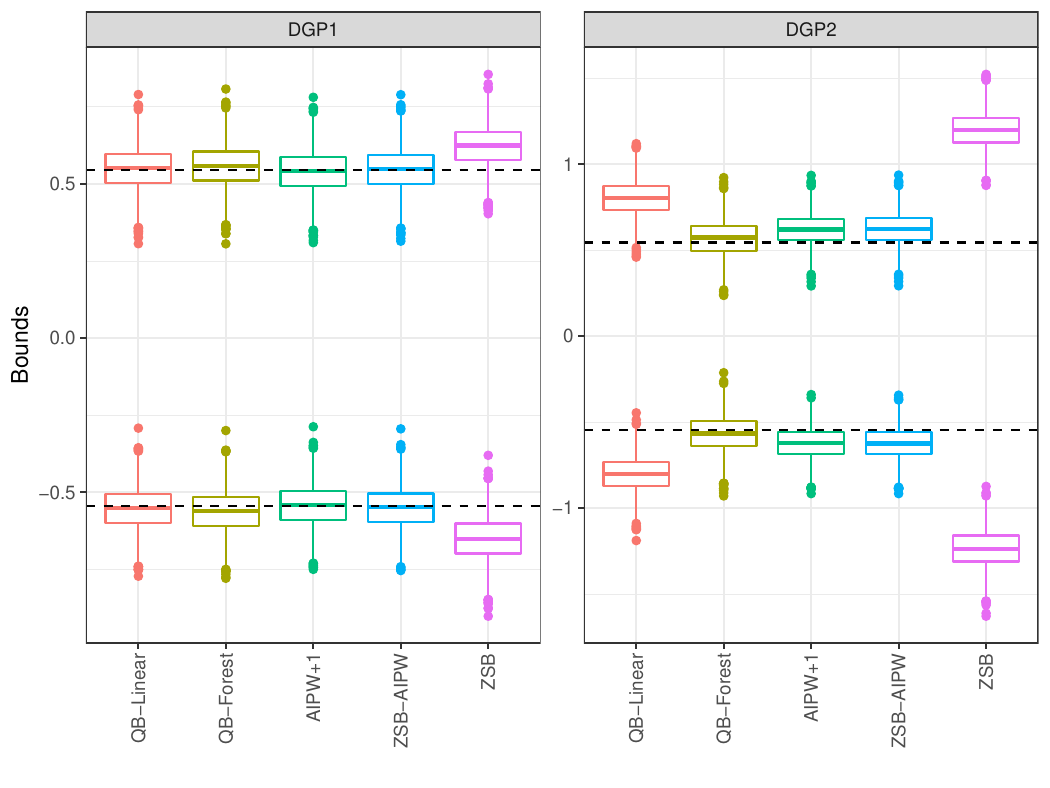}
    \caption{\textit{Boxplots of the ATE upper and lower bound point estimates for both DGPs and all considered methods.  The dashed line indicates the boundary of the true partially identified set.  In DGP1, all methods but \texttt{ZSB} are correctly specified and give reasonably accurate bounds.  In DGP2, the \texttt{Forest} method is well-suited to the piecewise-constant conditional quantiles and gives the most accurate bounds.}}
    \label{fig:simulation}
\end{figure}

Figure \ref{fig:coverage1000} shows the coverage for 95\% bootstrap confidence intervals based on each of the five methods. In DGP1, both quantile balancing methods have nearly nominal coverage, but AIPW-based methods undercover and the \texttt{+1} constraint exacerbates the undercoverage. In DGP2 the \texttt{QB-Forest} method achieves nearly nominal coverage, while all other methods overcover.  The \texttt{ZSB} method overcovers for both DGPs. 

\begin{figure}[!ht]
    \centering
    \includegraphics[width=15cm]{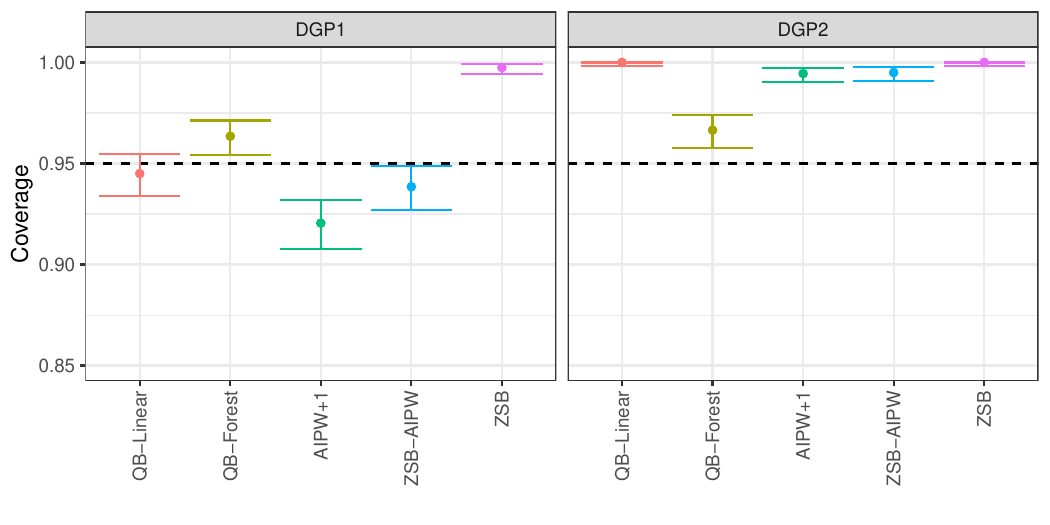}
    \caption{\textit{The coverage of nominal level 95\% bootstrap confidence intervals based on five different methods.  Estimates are based on $2,000$ simulations each with $n = 1,000$ observations.  The error bars are binomial confidence intervals for the true coverage probability.}}
    \label{fig:coverage1000}
\end{figure}

\subsection{Real data} \label{section:real_data}

In this section, we apply our proposed sensitivity analysis to a subsample of data from the 1966-1981 National Longitudinal Survey (NLS) of Older and Young Men.  We wish to estimate the impact of union membership on wages.  Specifically, we consider the ATE of union membership on log wages.  For illustrative reasons, we focus on the 1978 cross-section of Young Men and restrict our attention to craftsmen and laborers not enrolled in school.  Our estimates are thus based on a sample of 668 respondents with measurements of wages, union membership, and eight covariates.

For our primary analysis, we use IPW to adjust for baseline imbalances in covariates between union and nonunion samples. Table \ref{table:balance} reports the covariate balance between union and nonunion samples before and after weighting by the (estimated) inverse propensity score.  On several important characteristics, inverse propensity weighting dramatically improves balance across the two samples. 
\begin{table}[!ht]
\centering
\small{\begin{tabular}{rrrrr}
  \hline
&\multicolumn{2}{c}{Unweighted} &\multicolumn{2}{c}{Weighted}\\
Covariate &Union &Nonunion &Union &Nonunion\\ 
  \hline
Age & 30.1 & 30.0 & 30.0 & 30.0 \\ 
Black & 24\% & 24\% & 23\% & 24\% \\ 
Metropolitan & \textcolor{red}{74\%} & \textcolor{red}{57\%} & 66\% & 65\% \\ 
Southern & \textcolor{red}{32\%} & \textcolor{red}{53\%} & 42\% & 42\% \\ 
Married & 78\% & 75\% & 76\% & 76\% \\ 
Manufacturing & \textcolor{red}{42\%} & \textcolor{red}{32\%} & 37\% & 38\% \\ 
Laborer & \textcolor{red}{23\%} & \textcolor{red}{15\%} & 18\% & 18\% \\ 
Education & 12.2 & 11.7 & 12.1 & 12.0 \\ 
   \hline
\end{tabular}}
\caption{\small{\textit{Covariate means among the nonunion and union subsamples, along with the means in the weighted samples.  In \textcolor{red}{red}, we highlight particularly large imbalances.  In the weighted samples, propensity weights are estimated using logistic regression.}}}
\label{table:balance}
\end{table}

The IPW point estimate of the ATE is 0.23 with an associated 90\% confidence interval of $[0.18, 0.27]$.  Thus, our primary analysis concludes that union membership has a positive effect on wages, at least on average among craftsmen and laborers.  Both the point estimate and the confidence interval are in agreement with prior literature studying the same problem using cross-sectional data.  See \cite{ jakubson1991, johnson1975} for overviews.  An AIPW-based primary analysis gives the same point estimate and confidence interval, up to rounding. 

\cite{freemn1984_unions}, \cite{mellow1981}, and many other economists have argued that cross-sectional estimates of the union premium overestimate the true causal effect because higher-skill workers are simultaneously more likely to be selected for union jobs and earn higher wages.  Here, ``skill" refers to an unobserved confounder which is only partially captured by the measured covariates.  Is it plausible that the positive effect we find in the IPW analysis could be entirely due to selection on skill?  A sensitivity analysis may help address this question.

Figure \ref{fig:union_results} reports point estimate ranges and 90\% bootstrap confidence intervals from quantile balancing, the ZSB-IPW method, and the ZSB-AIPW method for several values of the sensitivity parameter $\Lambda$.  For quantile balancing, we estimate conditional quantiles using linear quantile regression with five-fold cross fitting.  For AIPW, we use linear regression for the outcome model.

\begin{figure}[!ht]
    \centering
    \includegraphics[width=15cm]{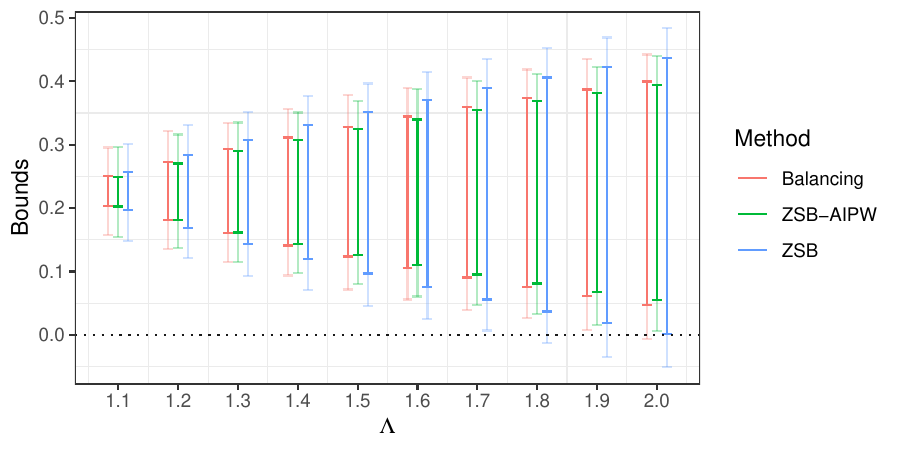}
    \caption{\textit{Point estimate ranges and 90\% bootstrap confidence intervals for the ATE in the NLS dataset. For the quantile balancing method, conditional quantiles are estimated using the linear quantile regression method of \cite{koenker_bassett_1978}, with five-fold cross-fitting.}}
    \label{fig:union_results}
\end{figure}

All three sensitivity analyses show that the positive effect found in the primary analysis is fairly robust to unobserved confounding, but quantile balancing and ZSB-AIPW refine the baseline ZSB-IPW interval.  Even if the odds of union membership for ``skilled" workers were nearly double ($\Lambda = 1.9$) the odds for ``typical" workers with the same observed covariates, the quantile balancing and AIPW sensitivity analyses analysis would still find a statistically significant positive treatment effect.  Meanwhile, when $\Lambda = 1.8$, the ZSB confidence intervals already include the null.  In this application, quantile balancing only slightly refines the ZSB range.  Moreover, quantile balancing and ZSB-AIPW yield very similar ranges and confidence intervals.  This is to be expected from the discussion in Section \ref{section:balance_aipw}, as an ``additive noise" model appears quite plausible in this application.

To put these sensitivities in context, we follow \cite{kallus_zhou2020} and compute the degree to which the (estimated) odds of union membership could change if \textit{measured} confounders were omitted from the dataset.  Caveats to this approach and more sophisticated empirical calibration strategies are discussed in 
\cite{hsu_small2013, zhang_small2020, CinelliHazlett}. 
No measured confounders except \texttt{Laborer} and \texttt{South} were able to nearly double or halve the odds of union membership for any respondent.  We interpret these results as showing that the qualitative conclusions of the primary analysis are fairly robust to unobserved confounding by skill.

Incidentally, longitudinal estimates of union wage effects --- which control for individual-specific effects like ``skill" --- come to similar conclusions as the one suggested by our sensitivity analysis.  Although treatment effect estimates from longitudinal studies are generally smaller than those from cross-sectional studies, they still find evidence in favor of the ``union premium" \citep{CHAMBERLAIN1982, jakubson1991, freemn1984_unions}.

\section{Conclusion}\label{section:conclusion}

We have shown that quantile balancing --- a simple modification of the popular ZSB sensitivity analysis --- is feasible, robust, and sharp.  This new sensitivity analysis for IPW is based on novel partial identification results for \cite{tan2006}'s marginal sensitivity model.

We will point to several interesting directions for future work. While our partial identification results focus on counterfactual means and a few treatment effects, it should be possible to extend our partial identification results to more complex estimands of the type considered in \cite{kallus2018interval, kallus2020confoundingrobust, confounding_robust_policy_improvement, kallus_zhou2020, causal_rule_ensemble}. Perhaps a similarly compact sensitivity analysis could even apply to dynamic treatment regimes. Future work could also investigate data-compatibility in the finite population model. In addition, while our IPW identification arguments generalize to any sensitivity assumption that only restricts the propensity score in a pointwise fashion (i.e. $e_{\min}(x) \leq e_0(x, u) \leq e_{\max}(x)$), the practicality of our sensitivity analysis and its theoretical properties rely on the marginal sensitivity model quite heavily.  It would be interesting to see if a practical and sharp sensitivity analysis could be developed for other sensitivity assumptions in this class.







\bibliographystyle{chicago}
\bibliography{bibliography.bib}



\appendix

\section{Appendix: implementation} \label{appendix:computation}

This appendix describes how the quantile balancing sensitivity analysis can be implemented using standard solvers for linear quantile regression (\ref{section:weighted_rq}), e.g. the \texttt{quantreg} package in \texttt{R} or the \texttt{qreg} function in \texttt{Stata}.  It also gives the formulas for the ATT bounds (\ref{section:ATT_bounds}), which were omitted from the main text, and offers a discussion of formulas for AIPW estimators (\ref{section:aipw_plus_qb}).  

Throughout this appendix, $\Lambda \geq 1$ is fixed and we set $\tau = \Lambda/(\Lambda + 1)$.  We also use the notation $\PEmpirical[\cdot]$ as shorthand for the average $\tfrac{1}{n} \sum_{i = 1}^n [ \cdot ]_i$.

\subsection{Computing bounds with weighted quantile regression} \label{section:weighted_rq}

We begin by considering computation of $\psi_{\textup{T}}^+$.  We consider the more general optimization problem (\ref{general_balancing}) for some function $g : X \rightarrow \R^k$ containing an ``intercept."  In the main text, we assumed $g(x) = (1, \hat{Q}_{\tau}(x, 1))$.
\begin{align}
\max_{\bar{e} \in \mathcal{E}_n(\Lambda)} \frac{\PEmpirical [YZ/\bar{e}]}{\PEmpirical [ Z/\hat{e}(X)]} \quad \text{subject to} \quad \PEmpirical [g(X) Z / \bar{e}] = \PEmpirical [g(X) Z / \hat{e}(X)]. \label{general_balancing}
\end{align}
Let $\rho_{\tau}(u) = u (\tau - \mathbb{I} \{ u < 0 \})$ be the quantile regression ``check" function \citep{koenker_2005, koenker_bassett_1978} and define the weighted linear quantile regression objective as:
\begin{align}
    \L_n(\gamma) := \PEmpirical [\rho_{\tau}(Y - \gamma^{\top} g(X)) Z \tfrac{1 - \hat{e}(X)}{\hat{e}(X)}].
\end{align}
The following proposition shows that any minimizer of $\L_n$ can be used to compute the solution of (\ref{general_balancing}).  

\begin{lemma}
\label{lemma:characterizing_optimum}
Suppose $\hat{e}(X_i) \in (0, 1)$ for all $i$, let $\hat{\gamma}$ minimize the weighted quantile regression objective $\L_n$ and let $\hat{V}_i = \textup{sign}(Y_i - \hat{\gamma}^{\top} g(X))$.  Then the optimal objective value in the quantile balancing problem (\ref{general_balancing}) is:
\begin{align*}
\frac{\PEmpirical [(Y - \hat{\gamma}^{\top} g(X)) Z(1 + \Lambda^{\hat{V}}(1-\hat{e}(X))/\hat{e}(X))] + \PEmpirical [\hat{\gamma}^{\top} g(X) Z / \hat{e}(X)]}{\PEmpirical [Z / \hat{e}(X)]}.
\end{align*}
\end{lemma}

\begin{proof}
See the supplementary materials.
\end{proof}

This same approach can be used to compute a lower bound for $\psi_{\textup{T}}$ by replacing $Y$ with $-Y$, applying Lemma \ref{lemma:characterizing_optimum}, and then negating the answer.  Upper and lower bounds for $\psi_{\textup{C}}$ can then be obtained by replacing $Z$ by $1 - Z$ and $\hat{e}(X)$ by $1 - \hat{e}(X)$ and then applying the same procedure.  Subtracting the upper and lower bounds for $\psi_{\textup{T}}$ and $\psi_{\textup{C}}$ as in Theorem \ref{theorem:psiATE_identified_set} gives bounds on $\psi_{\textup{ATE}}$.

\subsection{Bounds for the ATT}\label{section:ATT_bounds}

Next, we describe the standard quantile balancing bounds for $\psi_{\textup{ATT}}$.  Let $\bar{Y}(1)$ be the average value of $Y_i$ among treated observations.  We define the quantile balancing upper bound for the ATT as the solution to the optimization problem (\ref{qbalance_att}), where $g_+(x) = (1, \hat{Q}_{1 - \tau}(x, 0))$.
\begin{align}
\begin{split}
\hat{\psi}_{\textup{ATT}}^+ = \max_{\bar{e} \in \mathcal{E}_n(\Lambda)} &\quad  \bar{Y}(1) - \frac{\sum_{Z_i = 0} Y_i \tfrac{\bar{e}_i}{1 - \bar{e}_i}}{\sum_{Z_i = 0} \tfrac{\bar{e}_i}{1 - \bar{e}_i}} \quad \textup{s.t.} \quad
\sum_{Z_i = 0} g_+(X_i) \frac{\bar{e}_i}{1 - \bar{e}_i} = \sum_{Z_i = 0} g_+(X_i) \frac{\hat{e}_i}{1 - \hat{e}_i} \label{qbalance_att}
\end{split}
\end{align}
The lower bound $\hat{\psi}_{\textup{ATT}}^-$ is defined similarly, but with maximization replaced by minimization and $g_+(x)$ replaced by $g_-(x) := (1, \hat{Q}_{\tau}(x, 0))$.  When $\Lambda = 1$, the two bounds collapse to the ordinary (stabilized) IPW estimate of the ATT under unconfoundedness \citep{ipw_review, cbps}.  These bounds can also be computed using a variant of Lemma \ref{lemma:characterizing_optimum}, but we omit the details.

\subsection{AIPW computation} \label{section:aipw_plus_qb}

Here, we give formulas for three increasingly sharp AIPW sensitivity analyses.  These were briefly discussed in the main text in Sections \ref{section:introducing_qb} and \ref{section:balance_aipw}.  For simplicity, we focus our discussion on the estimand $\psi_{\textup{T}} = \E[Y(1)]$.  

Recall that, under unconfoundedness, the stabilized and unstabilized AIPW estimators of $\psi_{\textup{T}}$ have the following formulas:
\begin{align*}
    \hat{\psi}_{\T}^{(stab)} & = \E_n[ \hat{\mu}(X, 1)] + \frac{\E_n[Z(Y - \hat{\mu}(X, 1))/\hat{e}(X)]}{\E_n[Z/\hat{e}(X)]} \\
    \hat{\psi}_{\T}^{(unstab)} & = \E_n[ \hat{\mu}(X, 1)] + \E_n[Z(Y - \hat{\mu}(X, 1))/\hat{e}(X)] 
\end{align*}
Analysts whose primary analysis was conducted using the stablized AIPW estimator $\hat{\psi}_{\textup{T}}^{(stab)}$ may consider using any of the following three estimators for $\hat{\psi}_{\T}^+$:
\begin{align}
    \max_{\bar{e} \in \mathcal{E}_n(\Lambda)} \left\{ \E_n[ \hat{\mu}(X, 1)] + \frac{\E_n[Z(Y - \hat{\mu}(X, 1))/\bar{e}]}{\E_n[Z/\bar{e}]} \right\} &  \tag{\texttt{ZSB-AIPW}} \\
    \max_{\bar{e} \in \mathcal{E}_n(\Lambda)} \left\{ \E_n[ \hat{\mu}(X, 1)] + \frac{\E_n[Z(Y - \hat{\mu}(X, 1))/\bar{e}]}{\E_n[Z/\bar{e}]} \right\} & \quad \text{s.t.} \quad \E_n[ Z / \bar{e}] = \E_n[Z/\hat{e}(X)] \tag{\texttt{AIPW+1}} \\
    \max_{\bar{e} \in \mathcal{E}_n(\Lambda)} \left\{ \E_n[ \hat{\mu}(X, 1)] + \frac{\E_n[Z(Y - \hat{\mu}(X, 1))/\bar{e}]}{\E_n[Z/\bar{e}]} \right\} & \quad \text{s.t.} \quad \binom{\E_n[ \hat{Q}_{\tau}^{(\epsilon)}(X, 1) Z / \bar{e}]}{\E_n[ Z / \bar{e}]} = \binom{\E_n[ \hat{Q}_{\tau}^{(\epsilon)}(X, 1) Z / \hat{e}(X)]}{\E_n[Z/\hat{e}(X)]} \tag{\texttt{QB-AIPW}} 
\end{align}
\texttt{ZSB-AIPW} is the \cite{zsb2019} proposal for stabilized AIPW estimators, which is generally not sharp.  \texttt{AIPW+1} was described in Section \ref{section:balance_aipw} and adds a ``balancing-ones" constraint which is necessary and sufficient for sharpness under homoscedastic additive noise models.  \texttt{QB-AIPW} additionally balances $\hat{Q}^{(\epsilon)}_{\tau}(x, z)$, an estimate of the $\tau$-th conditional quantile of the residual $\epsilon = Y - \mu(X, Z)$, which is necessary for sharpness under heteroscedastic models.  All three approaches can be extended to $\hat{\psi}_{\textup{\T}}^{(unstab)}$ by removing the term $\E_n[ Z / \bar{e}]$ from the objective, though this change may impose a substantial cost with \texttt{ZSB} approach.

The additional constraints as we move from \texttt{ZSB-AIPW} to \texttt{AIPW+1} to \texttt{QB-AIPW} come at a cost. In certain simulations (see Appendix \ref{appendix:results}), the constraints lead to substantial undercoverage of bootstrap confidence intervals. The added constraint in \texttt{AIPW+1} is necessary for sharpness in additive-noise models but typically only refines the \texttt{ZSB-AIPW} estimate slightly. The \texttt{QB-AIPW} estimator, which requires an additional residual nuisance estimate, can be sharp under more general models. However, the \texttt{QB-AIPW}'s under-coverage is particularly extreme.

\section{Appendix: additional simulation results}\label{appendix:results}

This appendix presents additional simulation results beyond those appearing in the main text.  For these simulations, we use the same two DGPs as Section \ref{section:numerical_examples} but include additional estimators and sample sizes.

In our simulations, we compare the four types of methods described in Section \ref{section:numerical_examples}  (\texttt{QB}, \texttt{ZSB-AIPW}, \texttt{AIPW+1} and \texttt{ZSB}) along with the \texttt{QB-AIPW} method described in Section \ref{section:aipw_plus_qb}.  The \texttt{QB-AIPW} method requires an estimate of $Q^{(\epsilon)}_{\alpha}(x, z)$, the $\alpha$-th conditional quantile of the residual $\epsilon = Y - \mu(X, Z)$.  For this, we use an estimator of the form:
\begin{align*}
\hat{Q}^{(\epsilon)}_{\alpha}(x, z) = \hat{Q}_{\alpha}(x, z) - \hat{\mu}(x, z).
\end{align*}
Here, $\hat{\mu}$ is an estimate of the conditional mean of $Y$ and $\hat{Q}_{\alpha}$ is an estimate of the $\alpha$-th conditional quantile of $Y$.

Figure \ref{fig:linearbox_app} presents point estimates from all five methods when outcome regressions and conditional quantiles are estimated using linear models.  In DGP1, most methods perform well when $n \geq 500$ except \texttt{ZSB}, which is noticeably conservative.  However, when $n = 100$, $\texttt{QB-AIPW}$ is noticeably aggressive.  Meanwhile, in DGP2, all methods are conservative at all sample sizes because the quantile models are misspecified.

Figure \ref{fig:forestbox_app} presents the same results when outcome regressions and conditional quantiles are estimated using random forest models.  In DGP1 the results are qualitatively similar to the results in Figure \ref{fig:linearbox_app} except $\texttt{QB-AIPW}$ is no longer aggressive.  Meanwhile, in DGP2, the \texttt{QB} and \texttt{QB-AIPW} methods yield sharper bounds than the other methods at all sample sizes, since the others do not account for heteroscedasticity.

\begin{figure}
    \centering
    \includegraphics[width=\textwidth]{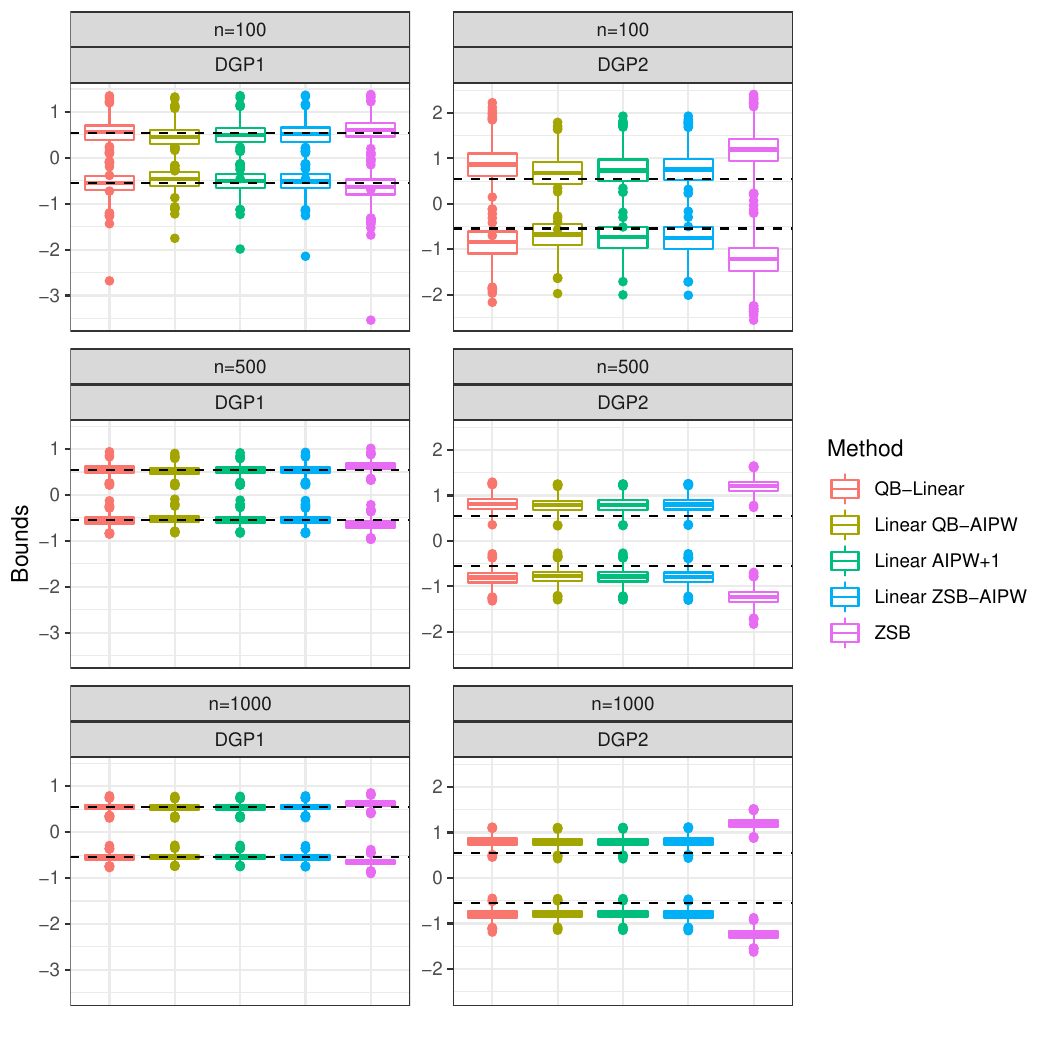}
    \caption{Box plot of point estimates across simulations with linear quantile and outcome regression nuisances.}
    \label{fig:linearbox_app}
\end{figure}

\begin{figure}
    \centering
    \includegraphics[width=\textwidth]{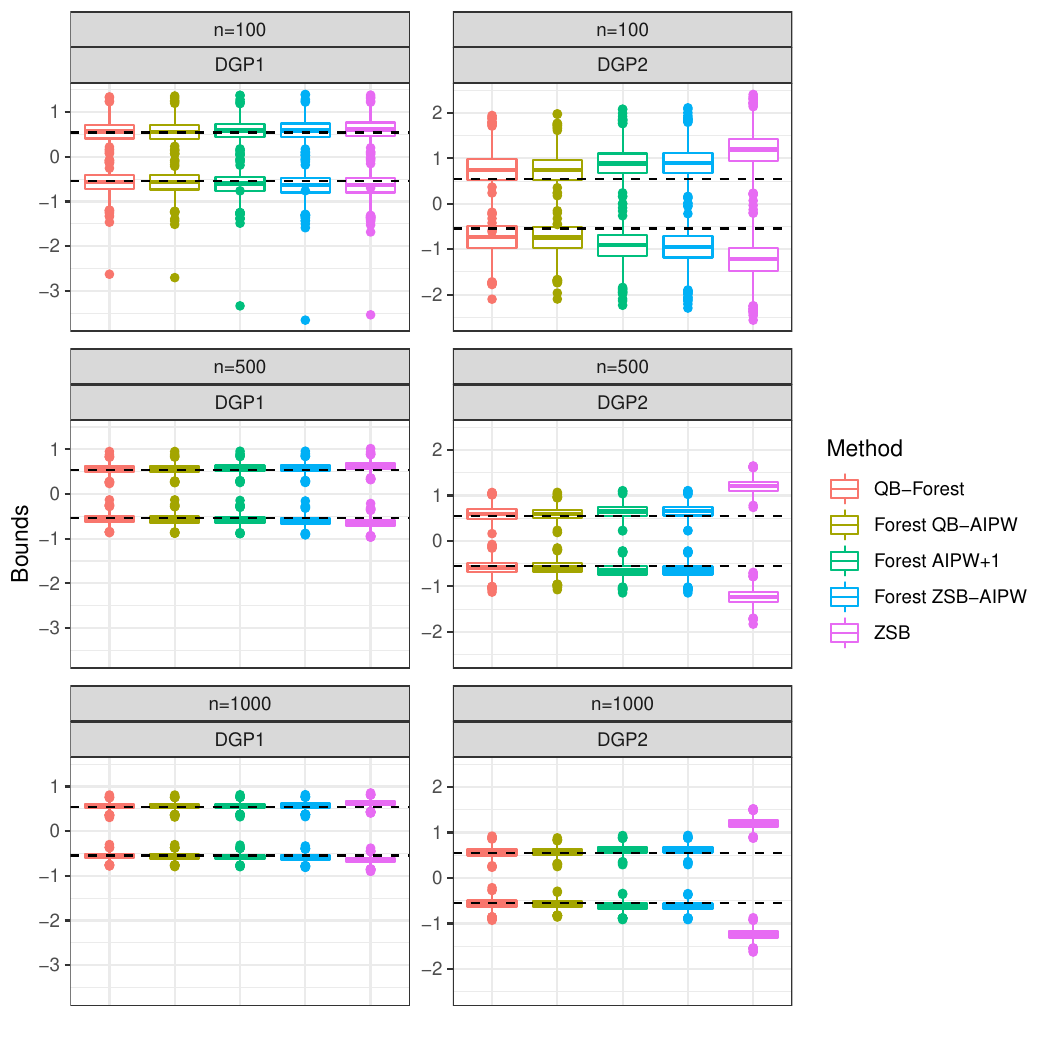}
    \caption{Box plot of point estimates across simulations with random forest-based quantile and outcome regression estimates.}
    \label{fig:forestbox_app}
\end{figure}

Table \ref{tab:coverage_tab_app} present coverage of bootstrap 95\% confidence intervals for all methods, DGPs, and sample sizes. In DGP1, all methods but those that eventually over-cover exhibit under-coverage at $n=100$. This is especially extreme for linear AIPW methods.  As the sample size increases, \texttt{QB} with linear quantiles achieves near-nominal coverage, while the AIPW-based methods with linear regression estimates continue to under-cover. ZSB-AIPW's asymptotic conservativeness seems to be useful for offsetting this under-coverage. With forest nuisance estimates, \texttt{QB} continues to achieve near-nominal coverage in DGP1, while AIPW-based methods eventually over-cover. In DGP2, once we get beyond 100 observations, only \texttt{QB}-based methods achieve coverage below 99\%. In those settings, \texttt{QB}-based methods with forest quantile estimates achieve near-nominal coverage.

\begin{table}[]
    \centering
    \begin{tabular}{lllllll}
  \hline
\multirow{2}{*}{Method} & \multicolumn{3}{c}{DGP1} &  \multicolumn{3}{c}{DGP2} \\
 & $n=100$ & $n=500$ & $n=1000$ & $n=100$ & $n=500$ & $n=1000$ \\ 
  \hline
QB-Linear & 90.7\% & 94.2\% & 94.5\% & 99.3\% & 100\% & 100\% \\ 
  Linear QB-AIPW & 79.5\% & 90.1\% & 90.8\% & 95.5\% & 99.9\% & 100\% \\ 
  Linear AIPW+1 & 84.5\% & 92\% & 92\% & 97.4\% & 99.9\% & 100\% \\ 
  Linear ZSB-AIPW & 87.2\% & 93\% & 93.8\% & 98\% & 100\% & 100\% \\ 
  QB-Forest & 91.1\% & 95.6\% & 96.4\% & 98.1\% & 96.7\% & 96.7\% \\ 
  Forest QB-AIPW & 91.4\% & 96.4\% & 97\% & 98\% & 96.7\% & 97.5\% \\ 
  Forest AIPW+1 & 94.6\% & 97.2\% & 97.6\% & 99.7\% & 99.1\% & 99.5\% \\ 
  Forest ZSB-AIPW & 96.5\% & 97.8\% & 98\% & 99.9\% & 99.1\% & 99.5\% \\ 
  ZSB & 96.9\% & 99.5\% & 99.8\% & 100\% & 100\% & 100\% \\ 
   \hline
\end{tabular}

    \caption{Table of rates at which various methods' 95\% bootstrap confidence intervals cover the full identified sets in both DGPs and with increasing sample sizes.}
    \label{tab:coverage_tab_app}
\end{table}


\section{Appendix:  proofs} \label{appendix:proofs}

This appendix collects proofs of the results in the main text along with supporting results. The organization of the appendix is to first prove all the Propositions appearing in the main text from first to last, including immediately-related Theorems, then all of the remaining Theorems in the main text from first to last. Proofs of non-immediate Corollaries are placed immediately after their main source. Proofs of substantial Lemmas are placed immediately before the argument in which they are first used.  For readers hoping to follow all of the arguments from beginning to end, we recommend reading the results in the following order:\\
\begin{enumerate}[topsep=0pt,itemsep=-1ex]
    \item The proof of Proposition \ref{proposition:data_compatibility_psiT} in Section \ref{proof:proposition:data_compatibility_psiT} (Page \pageref{proof:proposition:data_compatibility_psiT}).
    \item The proof of Corollary \ref{corollary:variational_problems} in Section \ref{proof:corollary:variational_problems} (Page \pageref{proof:corollary:variational_problems}).
    \item The proofs of Proposition \ref{proposition:psiT_formulas} and Theorem \ref{theorem:psiT_identified_set} in Section \ref{proof:proposition:psiT_formulasAndproof:theorem:psiT_identified_set} (Page \pageref{proof:proposition:psiT_formulasAndproof:theorem:psiT_identified_set}).
    \item The proofs of Proposition \ref{proposition:data_compatibility_ATE} and Theorem \ref{theorem:psiATE_identified_set} in Section \ref{proof:proposition:data_compatibility_ATEAndproof:theorem:psiATE_identified_set} (Page \pageref{proof:proposition:data_compatibility_ATEAndproof:theorem:psiATE_identified_set}).
    \item The proof of Corollary \ref{corollary:additive_noise_identified_set} in Section \ref{proof:corollary:additive_noise_identified_set} (Page \pageref{proof:corollary:additive_noise_identified_set})
    \item The proof of Corollary \ref{corollary:zsb_not_sharp} in Section \ref{proof:corollary:zsb_not_sharp} (Page \pageref{proof:corollary:zsb_not_sharp}).
    \item The proof of Lemma \ref{lemma:characterizing_optimum} in Section \ref{proof:lemma:characterizing_optimum} (Page \pageref{proof:lemma:characterizing_optimum}).
    \item The proof of Theorem \ref{theorem:sharpness}, which is split across Sections \ref{proof:theorem:sharpness_linear} (Page \pageref{proof:theorem:sharpness_linear}) and \ref{proof:theorem:sharpness_nonlinear} (Page \pageref{proof:theorem:sharpness_nonlinear}) for the linear and non-linear cases, respectively.
    \item The proof of Theorem \ref{theorem:inference} in Section \ref{proof:theorem:inference} (Page \pageref{proof:theorem:inference}).\\
\end{enumerate}
Many of the proofs depend on results from earlier in the list, but no proof depends on any results appearing later in the list.

Throughout, we will use the following notation.  For an integer $n \geq 1$, $[n]$ denotes the set $\{ 1, \cdots, n \}$. If $\{ a_n \}$ and $\{ b_n \}$ are sequences of real numbers, then $a_n \precsim b_n$ means $a_n = \mathcal{O}(b_n)$ and $a_n \sim b_n$ means $a_n / b_n \rightarrow 1$.  Similarly, if $\{ A_n \}$ and $\{ B_n \}$ are sequences of random variables, then $A_n \precsim_P B_n$ means $A_n = \mathcal{O}_P(B_n)$ and $A_n \sim_P B_n$ means $A_n / B_n \xrightarrow{p} 1$.  We adopt the convention that $a/b = 0$ when $a$ and $b$ are both zero.  

We also make use of some standard empirical process notation.  For a (possibly random) function $f : \X \times \R \times \{ 0, 1 \} \rightarrow \R$, we will write $\PSample f := \int f \d \PSample$ and $\PEmpirical f := \tfrac{1}{n} \sum_{i = 1}^n f(X_i, Y_i, Z_i)$.  For any vector $v = (v_1, ..., v_n)$, we take $\PEmpirical v = \tfrac{1}{n} \sum_{i = 1}^n v_i$.  For any $p \in [1, \infty)$, we define $|| f ||_{L^p(P)} = (P |f|^p)^{1/p}$ and $|| f ||_{L^p(\mathbb{P}_n)} = (\PEmpirical |f|^p)^{1/p}$.  When $p = \infty$, we set $|| f ||_{L^{\infty}(\PSample)} = \inf \{ t \, : \, \PSample( |f| \leq t) = 1 \}$ and $|| f ||_{L^{\infty}(\mathbb{P}_n)} = \max_{i \leq n} |f(X_i, Y_i, Z_i)|$.

\subsection{Proof of Proposition \ref{proposition:data_compatibility_psiT}}\label{proof:proposition:data_compatibility_psiT}

We instead show the more general result:

\begin{manualproposition}{\ref*{proposition:data_compatibility_psiT}B}\label{propositionAlt:data_compatibility_psiT}
Let $(X, Y, Z) \sim \PSample$, and let $e_{\min}, e_{\max} : \X \rightarrow (0, 1]$ be any two functions.  For any random variable $\bar{E} \in (0, 1]$ satisfying $\E[ Z / \bar{E} | X] = 1$ and $Z / e_{\max}(X) \leq Z/\bar{E} \leq Z / e_{\min}(X)$, we can construct random variables $(Y(0), Y(1), U)$ on the same probability space as $(X, Y, Z, \bar{E})$ and an associated putative propensity score $\bar{e}(X, U) := \E[Z | X, U]$ satisfying the following properties:
\begin{enumerate}[label=(\roman*),topsep=0pt,itemsep=-1ex]
\item $Y = Z Y(1) + (1 - Z) Y(0)$. \label{item:Alt:data_compatibility_psiT:DataMatching}
\item $(Y(0), Y(1)) \, \indep \, Z \mid (X, U)$ and $e_{\min}(X) \leq \bar{e}(X, U) \leq e_{\max}(X)$.\label{item:Alt:data_compatibility_psiT:MSM}
\item $Z /\bar{e}(X, U) = Z / \bar{E}$.\label{item:Alt:data_compatibility_psiT:PropensityMatching}
\end{enumerate}
\end{manualproposition}

To recover the result of Proposition \ref{proposition:data_compatibility_psiT} from Proposition \ref{propositionAlt:data_compatibility_psiT}, define $e_{\min}(x) = e(x)/(e(x) + [1 - e(x)] \Lambda)$ and $e_{\max}(x) = e(x)/(e(x) + [1 - e(x)]/\Lambda)$.  Then let $\QSample$ be the joint distribution of $(X, Y(0), Y(1), Z, U)$.  Item \ref{item:Alt:data_compatibility_psiT:DataMatching} implies $\QSample$ is data compatible, item \ref{item:Alt:data_compatibility_psiT:MSM} and $\bar{E} \in \mathcal{E}_\infty(\Lambda)$ imply $\QSample$ satisfies Assumption \ref{assumption:msm}, and item \ref{item:Alt:data_compatibility_psiT:PropensityMatching} implies $\E_{\QSample}[Y(1)] = \E_\QSample[ Y Z /  \bar{e}(X, U)] = \E_{\PSample}[ YZ / \bar{E}]$.

\begin{proof}
We begin by constructing $Y(0), Y(1)$ and $U$.  Let $(X, Y, Z, \bar{E})$ be as in the proposition, and suppose we have access to independent random variables $V_1, V_2 \sim \text{Uniform}[0, 1]$ which are also jointly independent of $(X, Y, Z, \bar{E})$.  Define the following collection of conditional distribution functions: 
\begin{align*}
F(y | x, z) &= \PSample(Y \leq y | X = x, Z = z)\\
G(y | x, z, \bar{e}) &= \PSample(Y \leq y | X = x, Z = z, \bar{E} = \bar{e})\\
H(\bar{e} | x, z) &= \PSample( \bar{E} \leq \bar{e} | X = x, Z = z)\\
K(u | x) &= \int_{-\infty}^u \frac{e(x)}{1 - e(x)} \frac{1 - \bar{e}}{\bar{e}} \, \d H(\bar{e} | x, 1)
\end{align*}
One can verify that the conditions $\E[ Z / \bar{E} | X] = 1$ and $\bar{E} > 0$ imply $K(u | x)$ is a proper CDF for each $x$.  Using these functions, we define $U$, $Y(1)$, and $Y(0)$ by:
\begin{align*}
    U &= Z \bar{E} + (1 - Z) K^{-1}( V_2 | X) \\
    Y(1) &= ZY + (1 - Z) G^{-1}(V_1 | X, 1, U)\\
    Y(0) &= Z F^{-1}(V_1 | X, 0) + (1 - Z) Y.
\end{align*}
We adopt the convention that $J^{-1}(s) := \inf \{ t \, : \, J(t) \geq s \}$ whenever $J$ is a distribution function, so that these quantities are well-defined even when some of these conditional distribution functions are not invertible. 

With the construction done, we now verify the properties stated in the Proposition.\\
\begin{enumerate}[label=(\roman*),topsep=0pt,itemsep=-1ex]
    \item This is immediate from the definition of $Y(0)$ and $Y(1)$.
    \item We compute the distribution of $(Y(0), Y(1))$ given $X, U, Z = 1$ and the distribution of $(Y(0), Y(1))$ given $X, U, Z = 0$.
\begin{align*}
\PSample( Y(0) \leq y_0, Y(1) \leq y_1 | X, U, Z = 1) &= \PSample( F^{-1}(V_1 | X, 0) \leq y_0, Y \leq y_1 | X, U, Z = 1)\\
&= \PSample( F^{-1}(V_1 | X, 0) \leq y_0 | X, U, Z = 1) G(y_1 | X, 1, U)\\
&= \PSample( F^{-1}(V_1 | X, 0) \leq y_0 | X) G(y_1 | X, 1, U)\\
&= F(y_0 | X, 0) G(y_1 | X, 1, U)\\
\PSample(Y(0) \leq y_0, Y(1) \leq y_1 | X, U, Z = 0) &= \PSample(Y \leq y_0, G^{-1}(V_1 | X, 1, U) \leq y_1 | X, U, Z = 0)\\
&= \PSample(Y \leq y_0 | X, U, Z = 0) \PSample( G^{-1}(V_1 | X, 1, U) \leq y_1 | X, U, Z = 0)\\
&= \PSample(Y \leq y_0 | X, Z = 0) G(y_1 | X, 1, U) \\
&= F(y_0 | X, 0) G(y_1 | X, 1, U).
\end{align*}
Since these are the same, $(Y(0), Y(1)) \, \indep \, Z \mid (X, U)$.  

A short calculation using Bayes' theorem shows that $\bar{e}(X, U)  = U$.
\begin{align*}
\bar{e}(x, u) &= e(x) \frac{\d \PSample( u | X=x, Z=1)}{\d \PSample( u | X=x)}\\
&= e(x) \frac{\d \PSample(u | x, 1) / \d H( u | x, 1)}{ \d \PSample(u | x) / \d H(u | x, 1)}\\
&= \frac{e(x)}{e(x) + (1 - e(x)) \tfrac{e(x)}{1 - e(x)} \tfrac{1 - u}{u}}\\
&= u
\end{align*}
Since the support of $K( \cdot | x)$ is a subset of the support of $H( \cdot | x, 1)$, the assumption $Z / e_{\max}(X) \leq \bar{E} \leq Z / e_{\min}(X)$ implies $e_{\min}(X) \leq U \leq e_{\max}(X)$ almost surely, so $e_{\min}(X) \leq e(X, U) \leq e_{\max}(X)$.

\item The event $Z = 1$ implies $U = \bar{E}$, so $Z / e(X, U) = Z / U = Z / \bar{E}$.
\end{enumerate}
\end{proof}

\subsection{Proof of Corollary \ref{corollary:variational_problems}} \label{proof:corollary:variational_problems}

\begin{proof}
For this proof, we need a mathematically precise definition of the partially identified set.  Let $\mathcal{P}(\Lambda)$ be the set of all probability distributions $\QSample$ on $\mathcal{X} \times \R \times \R \times \{ 0, 1 \} \times \R^k$ (for some $k \geq 1$) satisfying the following properties:\\
\begin{enumerate}[label=(\roman*),topsep=0pt,itemsep=-1ex]
    \item If $(X, Y(0), Y(1), Z, U) \sim \QSample$, then $(Y(0), Y(1)) \, \indep \, Z \mid (X, U)$.
    \item If we define $Y = Z Y(1) + (1 - Z) Y(0)$, then the law of $(X, Y, Z)$ under $\QSample$ is the observed-data distribution $\PSample$.
    \item The odds ratio between $\QSample(Z = 1 \mid X, U)$ and $\QSample(Z = 1 \mid X)$ is bounded between $\Lambda^{-1}$ and $\Lambda$ almost surely.\\
\end{enumerate}
The partially identified set for $\psi_{\T}$ is the set $\Psi_{\T} = \{ \E_{\QSample}[Y(1)] \, : \, \QSample \in \mathcal{P}(\Lambda) \}$. We begin by verifying (\ref{psi_plus_variational}) by bounding $\psi_{\T}^+$ below and then bounding it above.

For any random variable $\bar{E}$ on the same probability space as $(X, Y, Z)$ satisfying $\E_{\PSample}[ Z / \bar{E} | X] = 1$, Proposition \ref{proposition:data_compatibility_psiT} implies that we may construct a distribution $\QSample \in \mathcal{P}(\Lambda)$ for which $\E_{\QSample}[Y(1)] = \E_{\PSample}[YZ / \bar{E}]$.  Therefore, $\psi_{\T}^+ = \sup \Psi_{\T} \geq \E_{\PSample}[YZ / \bar{E}]$.  Since this inequality holds for every $\bar{E}$ satisfying $\E_{\PSample}[Z / \bar{E} | X] = 1$, it holds for the supremum over $\bar{E}$.  This proves one side of the equality (\ref{psi_plus_variational}).

For the other side, for any distribution $\QSample \in \mathcal{P}(\Lambda)$, we may write:
\begin{align*}
\E_Q[Y(1)] &= \E_Q [ Y Z / \QSample(Z = 1 \mid X, U)]\\
&= \E_Q[ YZ \times \E[ 1/\QSample(Z = 1 \mid X, U) \mid X, Y, Z]]
\end{align*}
Since $\E[1 / \QSample(Z = 1 \mid X, U) \mid X, Y, Z]$ is $\sigma(X, Y, Z)$-measurable, there exists a measurable function $\bar{e}_Q(x, y, z)$ such that $e_Q(X, Y, Z) = 1/\E[1/\QSample(Z = 1 \mid X, U) \mid X, Y, Z]$.  Hence, if we define the random variable $\bar{E}$ on the same probability space on which $\PSample$ is defined by $\bar{E} = \bar{e}_Q(X, Y, Z)$, then we have:
\begin{align*}
\E_{\QSample}[Y(1)] &= \E_{\QSample}[ Y Z / \bar{e}_Q(X, Y, Z)]\\
&= \E_{\PSample}[ YZ / \bar{e}_Q(X, Y, Z)]\\
&= \E_{\PSample}[YZ / \bar{E}].
\end{align*}
Finally, we check that $\bar{E}$ has the required properties.  For any integrable function $h : \mathcal{X} \rightarrow \R$, we have:
\begin{align*}
\E_{\PSample}[h(X) Z / \bar{E}] &= \E_{\PSample}[ h(X) Z / \bar{e}_Q(X, Y, Z)]\\
&= \E_{\QSample}[ h(X) Z / \bar{e}_Q(X, Y, Z)]\\
&= \E_{\QSample}[ h(X) Z \E[ 1 / \QSample(Z = 1 \mid X, U) \mid X, Y, Z]]\\
&= \E_{\QSample}[ h(X)]\\
&= \E_{\PSample}[h(X)]
\end{align*}
Since this holds for every $h$, we may conclude $\E_{\PSample}[Z / \bar{E} \mid X] = 1$.  Finally, since, conditional on $X$, the support of $\bar{e}_{\QSample}(X, Y, Z)$ (under $\PSample$ or $\QSample$) is the same as that of $\QSample(Z = 1 \mid X, U)$, we may conclude that the following holds with probability one:
\begin{align*}
1 + \tfrac{1 - e(X)}{e(X)} \Lambda^{-1} \leq 1/\bar{E} \leq 1 + \tfrac{1 - e(X)}{e(X)} \Lambda
\end{align*}
Hence, $\bar{E} \in \mathcal{E}_{\infty}(\Lambda)$, implying $\E_{\QSample}[ Y(1)] = \E_{\PSample}[ YZ / \bar{E}] \leq \sup_{\bar{E} \in \mathcal{E}_{\infty}(\Lambda)} \E[YZ / \bar{E}]$ s.t. $\E[ Z / \bar{E} | X] = 1$.  Since $\QSample$ is arbitrary, the inequality continues to hold after taking the supremum over $\QSample \in \mathcal{P}(\Lambda)$ on both sides.  This proves the other side of (\ref{psi_plus_variational}).

The equality (\ref{psi_minus_variational}) follows from an identical argument.

We complete the proof by showing that the identified set is an interval. Suppose $\psi = \alpha \psi_{\T}^- + (1-\alpha) \psi_{\T}^-$ for some $\alpha \in [0, 1]$. Suppose $\bar{E}_-$ and $\bar{E}_+$ solve (\ref{psi_minus_variational}) and (\ref{psi_plus_variational}), respectively. Define $\bar{E}^* = 1/[\alpha/\bar{E}_- + (1- \alpha)/\bar{E}_+]$. Then $\bar{E}^* \in \left[ \min\{ \bar{E}_-, \bar{E}_+ \}, \max\{ \bar{E}_-, \bar{E}_+ \} \right]$, so $\bar{E}^* \in \mathcal{E}_\infty(\Lambda)$. In addition:
\begin{align*}
\E[ Z / \bar{E}^* | X] &= \alpha \E[ Z / \bar{E}_- | X] + (1 - \alpha) \E[ Z / \bar{E}_+ | X] = \alpha + (1 - \alpha) = 1
\end{align*}
Therefore, by Proposition \ref{proposition:data_compatibility_psiT}, $\alpha \psi_{\T}^- + (1-\alpha) \psi_{\T}^-$ is in the partially identified set.
\end{proof}

\subsection{Proof of Proposition \ref{proposition:psiT_formulas}  and Theorem \ref{theorem:psiT_identified_set}}\label{proof:proposition:psiT_formulasAndproof:theorem:psiT_identified_set}

\begin{proof} 
We begin by proving Proposition \ref{proposition:psiT_formulas}, which is sufficient to make Theorem \ref{theorem:psiT_identified_set} a simple implication of Corollary \ref{corollary:variational_problems}. By symmetry, it suffices to show that $\bar{E}_+$ solves both (\ref{psiT_plus}) and (\ref{psi_plus_variational}), where (\ref{psiT_plus}) is from Theorem \ref{theorem:psiT_identified_set} and (\ref{psi_plus_variational}) is from Corollary \ref{corollary:variational_problems}.

First, we show that there exists $\bar{E}_+ \in \mathcal{E}_{\infty}(\Lambda)$ with the properties stated in Proposition \ref{proposition:psiT_formulas}.  Define $e_{\min}(x) = e(x)/(e(x) + [1 - e(x)]/\Lambda)$ and $e_{\max}(x) = e(x)/(e(x) + [1 - e(x)] \Lambda)$.  For any $\gamma \in [e_{\min}(x), e_{\max}(x)]$, define $e_{\gamma}(x, y)$ by:
\begin{align*}
\bar{e}_{\gamma}(x, y) = 
\left\{
\begin{array}{ll}
e_{\min}(x) &\text{if } y > Q_{\tau}(x, 1)\\
e_{\max}(x) &\text{if } y < Q_{\tau}(x, 1)\\
\gamma &\text{if } y = Q_{\tau}(x, 1)
\end{array}
\right.
\end{align*}
We claim that for all $x$, there exists $\gamma(x) \in [ e_{\min}(x), e_{\max}(x)]$ solving $\E[ Z / \bar{e}_{\gamma(x)}(X, Y) | X = x] = 1$. We will prove this by applying the intermediate value theorem to the continuous function $w_x(\gamma) :=\E[ Z/e_{\gamma}(X, Y) | X = x]$.  If we took $\gamma = e_{\max}(x)$, then we would have:
\begin{align*}
w_x(e_{\max}(x)) &= F(Q_{\tau}(x, 1) | x, 1) ( e(x) + [1 - e(x)]/\Lambda) + (1 - F(Q_{\tau}(x, 1) | x, 1)) (e(x) + [1 - e(x)] \Lambda)\\
&\leq e(x) + (1 - e(x)) ( \tau / \Lambda + (1 - \tau) \Lambda)\\
&= 1
\end{align*}
and a similar calculation shows $w_x( e_{\min}(x)) \geq 1$.  Thus, there is some $\gamma(x) \in [e_{\min}(x), e_{\max}(x)]$ which solves $\E[ Z / \bar{e}_{\gamma(x)}(X, Y) | X = x] = 1$.  Therefore, $\bar{E}_+ := \bar{e}_{\gamma(X)}(X, Y)$ belongs to $\mathcal{E}_{\infty}(\Lambda)$ and satisfies $\E[ Z / \bar{E}_+ | X] = 1$.

Now we show that any random variable $\bar{E}_+$ satisfying the requirements of the proposition solves the quantile balancing problem (\ref{psiT_plus}).  It is easy to see that $\bar{E}_+$ is feasible in (\ref{psiT_plus}), since $\E[ Q_{\tau}(X) Z / \bar{E}_+] = \E[ Q_{\tau}(X) \E[ Z / \bar{E}_+ | X]] = \E[ Q_{\tau}(X)]$.  Moreover, for any other $\bar{E} \in \mathcal{E}_{\infty}(\Lambda)$ which balances $Q_{\tau}$, we may write:
\begin{align}
\E[ YZ / \bar{E}] &= \E[ Q_{\tau}(X, 1) Z / \bar{E} + (Y - Q_{\tau}(X, 1)) Z / \bar{E}] \nonumber \\
&\leq \E[ Q_{\tau}(X, 1)] + \E[ (Y - Q_{\tau}(X, 1)) Z / \bar{E}_+] \nonumber \\
&= \E[ Q_{\tau}(X, 1) Z / \bar{E}_+] + \E[ (Y - Q_{\tau}(X, 1) Z / \bar{E}_+] \nonumber \\
&= \E[ Y Z / \bar{E}_+]. \nonumber 
\end{align}
The inequality step follows because $1/\bar{E}_+$ takes on the maximum allowable value whenever $(Y - Q_{\tau}(X, 1)) Z$ is positive and the minimal allowable value whenever $(Y - Q_{\tau}(X, 1)) Z$ is negative, so $(Y - Q_{\tau}(X, 1)) Z / \bar{E}_+$ is always larger than $(Y - Q_{\tau}(X, 1)) Z / \bar{E}$.  Since $\bar{E}$ is arbitrary, this proves $\bar{E}_+$ solves (\ref{psiT_plus}).

Finally, $\bar{E}_+^*$ solves the less constrained problem (\ref{psiT_plus}) and is feasible in the more constrained problem (\ref{psi_plus_variational}), so it solves (\ref{psi_plus_variational}) as well.  This proves Proposition \ref{proposition:psiT_formulas}.  

Now we proceed to Theorem \ref{theorem:psiT_identified_set}.  To prove that the partially identified set is an interval, observe that the set
\begin{align*}
   \mathcal{W} = \{ 1 / \bar{E} \, : \, \bar{E} \in \mathcal{E}_{\infty}(\Lambda), \E_{\PSample}[ Z / \bar{E} | X] = 1 \} 
\end{align*}
is convex.  By Corollary \ref{corollary:variational_problems}, the partially identified set is the image of $\mathcal{W}$ under the linear function $W \mapsto \E[ YZW] $.  Therefore, the partially identified set is a convex set in $\R$, i.e. an interval.

The formulas for the interval endpoints follow immediately from Corollary \ref{corollary:variational_problems} and Proposition \ref{proposition:psiT_formulas}.  These results also show that the endpoints are attained, so that the partially identified interval is closed.
\end{proof}

\subsection{Proof of Proposition \ref{proposition:data_compatibility_ATE} and Theorem \ref{theorem:psiATE_identified_set}}\label{proof:proposition:data_compatibility_ATEAndproof:theorem:psiATE_identified_set}

\begin{proof}
We will divide the proof of Proposition \ref{proposition:data_compatibility_ATE}, where we begin, into several steps.  Rather than explicitly constructing a distribution $\QSample$ with $\E_{\QSample}[Y(1)] = \E_{\PSample}[ YZ / \bar{E}]$ and $\E_{\QSample}[Y(0)] = \E_{\PSample}[Y(1-Z)/(1-\bar{E})]$ for each $\bar{E}$ satisfying the conditions of the Proposition, we will instead construct the extremal distributions $\QSample_{+,+}$, $\QSample_{+,-}$, $\QSample_{-,+}$ and $\QSample_{-,-}$ that attain the endpoints of the partially identified set for $\psi_{\T}$ and $\psi_{\C}$. Then, we will show that we can achieve any mixture.  This will establish Proposition \ref{proposition:data_compatibility_ATE}.  

\subsubsection{Notation}

We begin by recording some notation that will be used throughout the proof.  By Theorem \ref{theorem:psiT_identified_set} and Proposition \ref{proposition:psiT_formulas} (and their generalizations to the estimand $\psi_{\C}$), the extremal potential outcomes have the following formulas:
\begin{align*}
\psi_{\T}^+ &= \E[ Y Z / \bar{E}_{\T}^+]\\
\psi_{\T}^- &= \E[ Y Z / \bar{E}_{\T}^- ]\\
\psi_{\C}^+ &= \E[ Y (1 - Z)/(1 - \bar{E}_{\C}^+) ] \\
\psi_{\C}^- &= \E[ Y (1 - Z) / (1 - \bar{E}_{\C}^-)]
\end{align*}
where the worst-case propensity scores $\bar{E}_{\T}^-, \bar{E}_{\T}^+, \bar{E}_{\C}^-, \bar{E}_{\C}^+$ are random variables which satisfy the following:
\begin{align*}
\bar{E}_{\T}^+ &= 
\left\{
\begin{array}{ll}
\tfrac{e(X)}{e(X) + [1 - e(X)] \Lambda} &\text{if } Y > Q_{\tau}(X, 1)\\
\tfrac{e(X)}{e(X) + [1 - e(X)] / \Lambda} &\text{if } Y < Q_{\tau}(X, 1)
\end{array}
\right.\\
\bar{E}_{\T}^- &= 
\left\{
\begin{array}{ll}
\tfrac{e(X)}{e(X) + [1 - e(X)] / \Lambda} &\text{if } Y > Q_{1 - \tau}(X, 1)\\
\tfrac{e(X)}{e(X) + [1 - e(X)] \Lambda} &\text{if } Y < Q_{1 - \tau}(X, 1)
\end{array}
\right.\\
\bar{E}_{\C}^+ &= \left\{
\begin{array}{ll}
\tfrac{e(X)}{e(X) + [1 - e(X)]/\Lambda} &\text{if } Y > Q_{\tau}(X, 0)\\
\tfrac{e(X)}{e(X) + [1 - e(X)] \Lambda} &\text{if } Y < Q_{\tau}(X, 0)
\end{array}
\right.\\
\bar{E}_{\C}^- &= \left\{
\begin{array}{ll}
 \tfrac{e(X)}{e(X) + [1 - e(X)] \Lambda} &\text{if } Y > Q_{1 - \tau}(X, 0)\\
 \tfrac{e(X)}{e(X) + [1 - e(X)] / \Lambda} &\text{if } Y < Q_{1 - \tau}(X, 0)
\end{array}\right. .
\end{align*}
The formulas for $\bar{E}_{\C}^-$ and $\bar{E}_{\C}^+$ can be derived by exchanging the roles of $Z$ and $1 - Z$ (and correspondingly the roles of $e(X)$ and $1 - e(X)$) and then applying Proposition \ref{proposition:psiT_formulas}.

\subsubsection{Constructing $\QSample_{+,-}$} \label{section:Qplusminus}

We now construct the distribution $\QSample_{+,-}$ which attains the upper bound on $\psi_{\T}$ and the lower bound on $\psi_{\C}$.  We will actually construct random variables $Y(0), Y(1), U$ on the same probability space as $(X, Y, Z)$, with associated plausible propensity score $\bar{e}(X, U) := \E[ Z | X, U]$, that satisfy the following requirements:\\
\begin{enumerate}[label=(\alph*),topsep=0pt,itemsep=-1ex]
    \item $Y = Y(1) Z + Y(0) (1 - Z)$.\label{item:AttainingATEUB:match_data}
    \item $(Y(0), Y(1)) \, \indep \, Z \mid (X, U)$.\label{item:AttainingATEUB:unconfounded}
    \item $\bar{e}(X, U) \in \mathcal{E}_{\infty}(\Lambda)$.\label{item:AttainingATEUB:MSM}
    \item $\E[ Y(1)] = \psi_{\textup{T}}^+$ and $\E[ Y(0)] = \psi_{\textup{C}}^-$.\label{item:AttainingATEUB:MatchingOutcomes}\\
\end{enumerate}
We then take $\QSample_{+,-}$ to be the joint distribution of $(X, Y(0), Y(1), Z, U)$.

We start with the construction.  Let $(X, Y, Z) \sim \PSample$ and $(V_1, V_2) \sim \textup{Uniform}[0, 1]^2$ independently of $(X, Y, Z)$.  Let $F(y | x, z) = \PSample(Y \leq y | X = x, Z = z)$ and $\bar{H}(y | x, z) = \PSample(Y = y | X = x, Z = z)$.  Let $T = \tau Z + (1 - \tau)(1 - Z)$, and define the binary ``confounder" $U$ by:
\begin{align*}
U = \mathbb{I} \{ Y > Q_T(X, Z) \} + \mathbb{I} \{ Y = Q_T(X, Z), V_1 \bar{H}(Y | X, Z) < F(Y | X, Z) - T \}.
\end{align*}
Define the conditional CDF of $Y$ to sample from by $G(y | x, z, u) = \PSample(Y \leq y | X = x, U = u, Z = z)$, and construct $Y(0), Y(1)$ by:
\begin{align*}
Y(1) &= Z Y + (1 - Z) G^{-1}( V_2 | X, Z = 1, U)\\
Y(0) &= Z G^{-1}(V_2 | X, Z = 0, U) + (1 - Z) Y.
\end{align*}
This concludes the construction.  We now verify that $Y(0), Y(1), U$ satisfy the required properties \ref{item:AttainingATEUB:match_data} -- \ref{item:AttainingATEUB:MatchingOutcomes} from the start of this sub-section.

\begin{enumerate}[label=(\alph*),topsep=0pt,itemsep=-1ex]
    \item This is immediate from the definition of $Y(0)$ and $Y(1)$.
    \item We prove \ref{item:AttainingATEUB:unconfounded} by computing the joint distribution of $(Y(0), Y(1))$ given $X, U, Z = 1$ and also the joint distribution of $(Y(0), Y(1))$ given $X, U, Z = 0$.
\begin{align*}
\PSample(Y(0) \leq y_0, Y(1) \leq y_1 | X, U, Z = 1) &= \PSample(G^{-1}(V_2 | X, 0, U) \leq y_0, Y \leq y_1 | X, U, Z = 1)\\
&= G(y_0 | X, 0, U) \PSample(Y \leq y_1 | X, U, Z = 1)\\
&= G(y_0 | X, 0, U) G(y_1 | X, 1, U)\\
\PSample(Y(0) \leq y_0, Y(1) \leq y_1 | X, U, Z = 0) &= \PSample(Y \leq y_0, G^{-1}(V_2 | X, 1, U) \leq y_1 | X, U, Z = 0)\\
&= \PSample(Y \leq y_0 | X, U, Z = 0) G( y_1 | X, 1, U)\\
&= G(y_0 | X, 0, U) G(y_1 | X, 1, U)
\end{align*}
Since these are the same, $(Y(0), Y(1)) \, \indep \, Z \mid (X, U)$.
\item We establish \ref{item:AttainingATEUB:MSM} by directly computing $\bar{e}(X, U)$.  First, observe that $\E[ U | X, Z = 1] = 1 - \tau$.
\begin{align*}
\E[ U | X, Z = 1] &= \PSample(Y > Q_{\tau}(X, Z) | X, Z = 1) + \PSample ( Y = Q_{\tau}(X, Z), V_1 < \tfrac{F( Q_{\tau}(X, 1) | X, 1) - \tau}{\bar{H}(Q_{\tau}(X, 1) \mid X, 1)} \mid X, 1 )\\
&= 1 - F(Q_{\tau}(X, 1) | X, 1) + \bar{H}( Q_{\tau}(X, 1) | X, 1) \tfrac{F(Q_{\tau}(X, 1) | X, 1) - \tau}{\bar{H}(Q_{\tau}(X, 1) | X, 1)}\\
&= 1 - \tau
\end{align*}
A similar calculation shows $\E[ U | X, Z = 0] = \tau$.  Therefore, we have:
\begin{align*}
\bar{e}(x, 0) &= \PSample(Z = 1 \mid X = x, U = 0)\\
&= \frac{e(x) \PSample(U = 0 | X = x, Z = 1)}{e(x) \PSample(U = 0 | X = x, Z = 1) + [1 - e(x)] \PSample(U = 0 | X = x, Z = 0)}\\
&= \frac{e(x) \tau}{e(x) \tau + [1 - e(x)] (1 - \tau)} \\
&= \frac{e(x)}{e(x) + [1 - e(x)] / \Lambda}\\
\bar{e}(x, 1) &= \PSample(Z = 1 \mid X = x, U = 1)\\
&= \frac{e(x) \PSample(U = 1 | X = x, Z = 1)}{e(x) \PSample(U = 1 | X = x, Z = 1) + [1 - e(x)] \PSample(U = 1 | X = x, Z = 0)}\\
&= \frac{e(x)(1 - \tau)}{e(x)(1 - \tau) + [1 - e(x)] \tau}\\
&= \frac{e(x)}{e(x) + [1 - e(x)] \Lambda}
\end{align*}
Both $\bar{e}(x, 1)$ and $\bar{e}(x, 0)$ satisfy the bounded odds ratio condition, so $\bar{e}(X, U) \in \mathcal{E}_{\infty}(\Lambda)$.

\item The explicit formulas for $\bar{e}(X, U)$ obtained in the proof of \ref{item:AttainingATEUB:MSM} shows that $\bar{e}(X, U)$ satisfies:
\begin{align}
\bar{e}(X, U) &= 
\left\{
\begin{array}{ll}
\tfrac{e(X)}{e(X) + [1 - e(X)] \Lambda} &\text{if } U = 1\\
\tfrac{e(X)}{e(X) + [1 - e(X)] / \Lambda} &\text{if } U = 0
\end{array}
\right. \label{eXU_formula}
\end{align}
If $Z = 1$, then $Y > Q_{\tau}(X, 1)$ implies $U = 1$ while $Y < Q_{\tau}(X, 1)$ implies $U = 0$.  Therefore, by comparing (\ref{eXU_formula}) with the formula for $\bar{E}_{\T}^+$, we may conclude that $Z / \bar{e}(X, U) = Z / \bar{E}_{\T}^+$, except possibly on the event $Y = Q_{\tau}(X, 1)$.  Moreover, we can check that $\E[ Z / \bar{e}(X, U) | X] = 1$.
\begin{align*}
\E[ Z / \bar{e}(X, U) | X] &= e(X) \E[ 1 / \bar{e}(X, U) | X, Z = 1]\\
&= e(X) ( \PSample(U = 0 | X, Z = 1) / \bar{e}(X, 0) + \PSample(U = 1 | X, Z = 1) / \bar{e}(X, 1))\\
&= e(X) ( \tau (1 + \tfrac{1 - e(X)}{e(X)} \Lambda^{-1}) + (1 - \tau) (1 + \tfrac{1 - e(X)}{e(X)} \Lambda))\\
&= 1
\end{align*}
Therefore, $\E[ YZ / \bar{e}(X, U)] = \psi_{\T}^+$ by Proposition \ref{proposition:psiT_formulas}.

Similarly, when $Z = 0$, then $Y > Q_{1 - \tau}(X, 0)$ implies $U = 1$ and $Y < Q_{1 - \tau}(X, 0)$ implies $U = 0$.  Therefore, by comparing (\ref{eXU_formula}) with the formula for $\bar{E}_{\C}^-$, we can conclude $(1 - Z) / (1 - \bar{e}(X, U)) = (1 - Z) / (1 - \bar{E}_{\C}^-)$, except possibly on the event $Y = Q_{1 - \tau}(X, 0)$.  Moreover, we can check that $\E[ (1 - Z)/(1 - \bar{e}(X, U)) | X] = 1$.
\begin{align*}
\E[ (1 - Z)/(1 - \bar{e}(X, U)) | X] &= (1 - e(X)) \E[ 1/(1 - \bar{e}(X, U)) | X, Z = 0]\\
&= (1 - e(X)) ( \tfrac{\PSample(U = 0 | X, Z = 0)}{1 - \bar{e}(X, 0)} + \tfrac{\PSample(U = 1 | X, Z = 1)}{1 - \bar{e}(X, 1)})\\
&= (1 - e(X)) ( (1 - \tau) \tfrac{1 - e(X) + e(X) \Lambda}{1 - e(X)} + \tau \tfrac{1 - e(X) + e(X) / \Lambda}{1 - e(X)})\\
&= 1
\end{align*}
Therefore, by an argument similar to the proof of Proposition \ref{proposition:psiT_formulas}, we have $\E[ YZ / \bar{e}(X, U)] = \psi_{\C}^-$.
\end{enumerate}

\subsubsection{Constructing the other extremal distributions}

Next, we construct the other extremal distributions.  We start with the distribution $\QSample_{+,+}$ that attains $\psi_{\T}^+$ and $\psi_{\C}^+$.  

Define $Y' = ZY + (1 - Z)(-Y)$.  Applying the construction from Section \ref{section:Qplusminus} to the data $(X, Y', Z)$ yields potential outcomes $(Y(0)', Y(1)')$ and a binary confounder $U'$ satisfying the consistency relation $Y' = Y(1)' Z + Y(0)'(1 - Z)$ and the unconfoundedness condition $(Y(0)', Y(1)') \, \indep \, Z \mid (X, U')$.  Moreover, if we define $Q_{t}'(x, z)$ to be the $t$-th conditional quantile of $Y'$ given $X = x, Z = z$, then $e'(X, U') := \E[ Z | X, U']$ will satisfy:
\begin{align*}
Z / e'(X, U') &=
\left\{
\begin{array}{ll}
Z \left(1 + \tfrac{1 - e(X)}{e(X)} \Lambda^{+1} \right) &\text{if } Y' > Q_{\tau}'(X, 1)\\
Z \left(1 + \tfrac{1 - e(X)}{e(X)} \Lambda^{-1} \right) &\text{if } Y' < Q_{\tau}'(X, 1)
\end{array}
\right.\\
(1 - Z) / (1 - e'(X, U')) &= \left\{
\begin{array}{ll}
(1 - Z)\left( 1 + \tfrac{e(X)}{1 - e(X)} \Lambda^{-1} \right) &\text{if } Y' > Q_{1 - \tau}'(X, 0)\\
(1 - Z)\left( 1 + \tfrac{e(X)}{1 - e(X)} \Lambda^{+1} \right) &\text{if } Y' < Q_{1 - \tau}'(X, 0)
\end{array}
\right.
\end{align*}
and also $\E[ Z / e'(X, U') | X] = \E[ (1 - Z)/(1 - e'(X, U') | X] = 1$. 

Observe that when $Z = 1$, $Y' = Y$ and $Q_{\tau}'(X, 1) = Q_{\tau}(X, 1)$.  Therefore, $Z/e'(X, U') = Z / \bar{E}_{\T}^+$, except possibly on the event $Y = Q_{\tau}(X, 1)$.  As a result, Proposition \ref{proposition:psiT_formulas} and Theorem \ref{theorem:psiT_identified_set} imply:
\begin{align*}
\E[ Y(1)'] &= \E[ Y' Z / e'(X, U')]\\
&= \E[ Y' Z / \bar{E}_{\T}^+ ] \\
&= \psi_{\T}^+
\end{align*}
On the other hand, when $Z = 0$, we have $Y' = -Y$ and $Q'_{1 - \tau}(X, 0) = - Q_{\tau}(X, 0)$.  On this event, $Y' > Q_{1 - \tau}'(X, 0)$ is equivalent to $Y < Q_{\tau}(X, 0)$, so $(1 - Z)/(1 - e'(X, U') = (1 - Z) / (1 - \bar{E}_{\C}^+)$, except possibly on the event $Y = Q_{\tau}(X, 0)$.  Similarly, Proposition \ref{proposition:psiT_formulas} and Corollary \ref{corollary:psiC_att_identified_set} imply:
\begin{align*}
\E[ Y(0)'] &= \E[ Y' (1 - Z) / (1 - e'(X, U'))]\\
&= -\E[ Y (1 - Z)/(1 - \bar{E}_{\C}^+)]\\
&= - \psi_{\C}^+
\end{align*}
Finally, we define $Y(0) = - Y(0)', Y(1) = Y(1)'$ and $U = U'$.  Then the data $(Y(0), Y(1), Z, X, U)$ will satisfy Assumption \ref{assumption:msm} and also $\E[ Y(1)] = \psi_{\T}^+$, $\E[ Y(0)] = \psi_{\C}^-$.

To construct $\QSample_{-,-}$, apply the preceding construction to $Y'' = -Y'$.  To construct $\QSample_{-,+}$, apply the construction in Section \ref{section:Qplusminus} to $Y''' = -Y$.

\subsubsection{Creating all convex combinations}

Finally, we show that for any $\psi_{\T}$ satisfying $\psi_{\T}^- \leq \psi_{\T} \leq \psi_{\T}^+$ and any $\psi_{\C}$ satisfying $\psi_{\C}^- \leq \psi_{\C} \leq \psi_{\C}^+$, there is a data-compatible distribution $\QSample$ satisfying Assumption \ref{assumption:msm} with $\E_{\QSample}[Y(1)] = \psi_{\T}$ and $\E_{\QSample}[Y(0)] = \psi_{\C}$.

Since the vector $(\psi_{\T}, \psi_{\C})$ lies in the convex hull of the points $(\psi_{\T}^-, \psi_{\C}^-), (\psi_{\T}^-, \psi_{\C}^+), (\psi_{\T}^+, \psi_{\C}^-), (\psi_{\T}^+, \psi_{\C}^+)$, there exists nonnegative weights $w_1, w_2, w_3, w_4$ summing to one and satisfying:
\begin{align*}
\binom{\psi_{\T}}{\psi_{\C}} &= w_1 \binom{\psi_{\T}^-}{\psi_{\C}^-} + w_2 \binom{\psi_{\T}^-}{\psi_{\C}^+} + w_3 \binom{\psi_{\T}^+}{\psi_{\C}^-} + w_4 \binom{\psi_{\T}^+}{\psi_{\C}^+}
\end{align*}
Let $M \sim \text{Multinomal}( \{ 1, \cdots, 4 \}, (w_1, \cdots, w_4))$, and sample $(X, Y(0), Y(1), Z, U) \sim \QSample_{-,-}$ when $M = 1$, $\QSample_{-,+}$ when $M = 2$, $\QSample_{+,-}$ when $M = 3$ and $\QSample_{+,+}$ when $M = 4$.  Finally, let $\QSample$ be the distribution of $(X, Y(0), Y(1), Z, U')$ where $U' = (U, M)$.

It is clear that the distribution $\QSample$ is data-compatible and satisfies Assumption \ref{assumption:msm}, since it is the mixture of distributions satisfying these conditions.  Moreover, it is easy to check that $\E_{\QSample}[Y(1)] = \E_{\QSample}[\E[ Y(1) | M]] = w_1 \psi_{\T}^- + w_2 \psi_{\T}^- + w_3 \psi_{\T}^+ + w_4 \psi_{\T}^+ = \psi_{\T}$.  By the same reasoning, $\E_{\QSample}[Y(0)] = \psi_{\C}$.  

\subsubsection{Proof of Theorem \ref{theorem:psiATE_identified_set}}

We now proceed to prove Theorem \ref{theorem:psiATE_identified_set}.

As in the proof of Corollary \ref{corollary:variational_problems}, let $\mathcal{P}(\Lambda)$ be the set of full-data distributions $\QSample$ compatible with Assumption \ref{assumption:msm} and the observed-data distribution $\PSample$.  Then we may write:
\begin{align*}
\psi_{\textup{ATE}}^+ &= \sup_{\QSample \in \mathcal{P}(\Lambda)} \E_Q[Y(1) - Y(0)] \\
&\leq \sup_{\QSample \in \mathcal{P}(\Lambda)} \E_Q[Y(1)] - \inf_{\QSample \in \mathcal{P}(\Lambda)} \E_Q[Y(0)]\\
&= \psi_{\T}^+ - \psi_{\C}^-.
\end{align*}

In the other direction, Proposition \ref{proposition:psiT_formulas} implies that there exists worst-case propensity scores $\bar{E}_{\T}^+$ and $\bar{E}_{\C}^-$ in $\mathcal{E}(\Lambda)$ satisfying $\psi_{\T}^+ = \E_{\PSample}[ YZ / \bar{E}_{\T}^+]$ and $\psi_{\C}^- = \E_{\PSample}[Y(1 - Z)/(1 - \bar{E}_{\C}^-)]$ such that if we define $\bar{E} = Z \bar{E}_{\T}^+ + (1 - Z) \bar{E}_{\C}^-$, then $\bar{E}$ satisfies the hypotheses of Proposition \ref{proposition:data_compatibility_ATE}.  Therefore, Proposition \ref{proposition:data_compatibility_ATE} implies that there exists a distribution $\QSample \in \mathcal{P}(\Lambda)$ for which  $\E_Q[Y(1) - Y(0)] = \psi_{\T}^+ - \psi_{\C}^-$. Therefore $\sup_{\QSample \in \mathcal{P}(\Lambda)} \E_{\QSample}[Y(1) - Y(0)] \geq \psi_{\T}^+ - \psi_{\C}^-$.

The arguments so far imply $\psi_{\T}^+ - \psi_{\C}^- \geq \psi_{\textup{ATE}}^+ = \sup_{\QSample \in \mathcal{P}(\Lambda)} \E_{\QSample}[Y(1) - Y(0)] \geq \psi_{\T}^+ - \psi_{\C}^-$.  Thus, $\psi_{\textup{ATE}}^+ = \psi_{\T}^+ - \psi_{\C}^-$.  By exactly the same reasoning, $\psi_{\textup{ATE}}^- = \psi_{\T}^- - \psi_{\C}^+$.

Finally, it remains to show that the partially identified set for $\psi_{\textup{ATE}}$ is a closed interval.  By Proposition \ref{proposition:data_compatibility_ATE}, the partially identified set for the ATE contains the set $\{ \psi_{\T} - \psi_{\C} \, : \, (\psi_{\T}, \psi_{\C}) \in [ \psi_{\T}^-, \psi_{\T}^+] \times [ \psi_{\C}^-, \psi_{\C}^+] \}$, which is a closed interval.  Moreover, the preceding calculation shows it does not contain any other points.  Thus, the partially identified set is a closed interval.
\end{proof}

\subsection{Proof of Corollary \ref{corollary:additive_noise_identified_set}} \label{proof:corollary:additive_noise_identified_set}

\begin{proof}
First, we will compute the partially identified set for $\psi_{\T}$.  Let $z_{\tau}$ denote the $\tau$-th quantile of the standard normal distribution.  Since the conditional distribution of $Y \mid X = x, Z = 1$ is continuous for every $x$, Proposition \ref{proposition:psiT_formulas} implies $\psi_{\T}^+ = \E[ YZ / \bar{E}_+]$ where $\bar{E}_+$ satisfies the following:
\begin{align*}
1/\bar{E}_+ = 
\left\{
\begin{array}{ll}
1 + \tfrac{1 - e(X)}{e(X)} \Lambda^{+1} &\text{if } Y \geq \mu(X, 1) + \sigma(X) z_{\tau}\\
1 + \tfrac{1 - e(X)}{e(X)} \Lambda^{-1} &\text{if } Y < \mu(X, 1) + \sigma(X) z_{\tau}
\end{array}
\right.
\end{align*}
Let $C(x) = \mu(x, 1) + \sigma(x) z_{\tau}$.  Write $\E[YZ / \bar{E}_+] = \E[ e(X) \E[ Y / \bar{E}_+ | X, Z = 1]]$ and evaluate the inner expectation as follows:
\begin{align*}
\E[ Y / \bar{E}_+ | X , Z = 1] &= \tau \E[Y / \bar{E}_+ | X , Z = 1, Y < C(X)] + (1 - \tau) \E[Y / \bar{E}_+ | X, Z = 1, Y \geq C(X)]\\
&= \tau  \E[Y | X , Z = 1, Y < C(X)] + \tau \tfrac{1 - e(X)}{e(X)} \Lambda^{-1} \E[ Y | X , Z = 1, Y < C(X) ]\\
&= \tfrac{\mu(X, 1)}{e(X)} + \tfrac{\Lambda - 1}{\Lambda} \tfrac{1 - e(X)}{e(X)} \sigma(X) \tfrac{\phi( z_{\tau})}{1 - \tau}
\end{align*}
In the last step, we used the inverse Mills ratio formula for the expectation of a truncated Gaussian distribution.  Simplifying gives $\psi_{\T}^+ = \E[ \mu(X, 1)] + \tfrac{\Lambda^2 - 1}{\Lambda} \phi(z_{\tau}) \E[ (1 - e(X)) \sigma(X)]$.

At this point, we can immediately generalize the above calculation to all other potential outcome bounds.  By applying the preceding calculation to $-Y$ and negating the answer, we may conclude:
\begin{align*}
    \psi_{\T}^- = \E[ \mu(X, 1)] - \tfrac{\Lambda^2 - 1}{\Lambda} \phi(z_{\tau}) \E[ (1 - e(X)) \sigma(X)].
\end{align*}
By exchanging the roles of $Z$ and $1 - Z$ (and correspondingly the roles of $e(X)$ and $1 - e(X)$), we then obtain the bounds:
\begin{align*}
\psi_{\C}^+ &= \E[ \mu(X, 0)] + \tfrac{\Lambda^2 - 1}{\Lambda} \phi(z_{\tau}) \E [ e(X) \sigma(X) ]\\
\psi_{\C}^- &= \E[ \mu(X, 0)] - \tfrac{\Lambda^2 - 1}{\Lambda} \phi(z_{\tau}) \E[ e(X) \sigma(X)]
\end{align*}
Finally, subtracting the sharp bounds on $\psi_{\T}$ and $\psi_{\C}$ as justified by Theorem \ref{theorem:psiATE_identified_set} gives the conclusion of Corollary \ref{corollary:additive_noise_identified_set}.
\end{proof}

\subsection{Proof of Corollary \ref{corollary:zsb_not_sharp}} \label{proof:corollary:zsb_not_sharp}

\begin{proof}
The partially identified set for $\psi_{\T}$ follows from the proof of Corollary \ref{corollary:additive_noise_identified_set}, so we only need to show that the ZSB interval is asymptotically too wide.  Let $\hat{\psi}_{\textup{T,ZSB}}^+$ be as in (\ref{zsb_bounds}).  Let $\bar{E}^* = \tfrac{1}{3} + \tfrac{1}{3} \mathbb{I} \{ Y \leq 0.27 \sqrt{\sigma^2 + 1} \}$, and notice that $Y \mid Z = 1 \sim \N(0, \sigma^2 + 1)$.  Then a straightforward calculation using the Inverse Mills ratio formula gives:
\begin{align*}
\frac{\E[ YZ / \bar{E}^*]}{\E[ Z / \bar{E}^*]} & = \frac{\phi(0.27) \sqrt{\sigma^2 + 1}}{2 -  \Phi(0.27)} > 0.276 \sqrt{\sigma^2 + 1}
\end{align*}
The strong law of large numbers implies $\liminf \hat{\psi}_{\textup{T,ZSB}}^+ \geq \liminf (\PEmpirical Y Z / \bar{E}^*)/ (\PEmpirical Z / \bar{E}^*) > 0.27 \sqrt{\sigma^2 + 1}$ almost surely.  The lower bound follows by symmetry. 

Note that the ZSB approach remains conservative even in the case $\sigma^2 = 0$, in which the identified set is $[\pm \frac{3}{4} \phi(z_{2/3})] \subset [\pm 0.276]$ and the ZSB AIPW approach we discuss in Section \ref{section:balance_aipw} is equivalent to this ZSB IPW approach.
\end{proof}

\subsection{Proof of Lemma \ref{lemma:characterizing_optimum}} \label{proof:lemma:characterizing_optimum}

\begin{proof}
If $\Lambda = 1$, the claim holds trivially, so we proceed assuming $\Lambda > 1$.

Let $\hat{W}_i = Z_i (1 - \hat{e}(X_i))/\hat{e}(X_i)$. Since $\L_n$ is convex, computing the subdifferential optimality criterion for $\hat{\gamma}$ shows that there exists a vector $\Delta \in [\Lambda^{-1}, \Lambda]^n$ such that $\PEmpirical \hat{W} g(X)(\Delta - 1) = 0$ and $\Delta_i = \Lambda^{\textup{sign}(Y_i - \hat{\gamma}^{\top} h(X_i))}$ whenever $Y_i \neq \hat{\gamma}^{\top} h(X_i)$.  

We will first show that $\bar{e}_i^* := \left(1 + \Delta_i(1 - \hat{e}_i)/\hat{e}_i\right)^{-1}$ solves  (\ref{general_balancing}). It is clear that $\bar{e}_i^*$ belongs to $\mathcal{E}_n(\Lambda)$.  Moreover, we have $0 = \PEmpirical \hat{W} g(X) (\Delta - 1) = \PEmpirical g(X) Z / \bar{e}^* - \PEmpirical g(X) Z / \hat{e}(X)$. Therefore $\bar{e}^*$ is a feasible solution to (\ref{general_balancing}).  

Optimality of $\bar{e}_i^*$ follows from Theorem 3.1 in \cite{dantzig_wald_1951}.  The main technical requirement to apply that result is that $\PEmpirical g(X) Z / \hat{e}(X)$ is in the relative interior of $\{ \PEmpirical g(X) Z / \tilde e \, : \, \tilde e \in \mathcal{E}_n(\Lambda) \} $.  If $0 < \hat{e}_i < 1$ for all $i$, then this condition is satisfied by the open mapping theorem and the fact that $1/\hat{e}$ is an interior point of $1/\mathcal{E}_n(\Lambda)$.

Finally, we show the desired equivalence:
\begin{align*}
    \frac{\PEmpirical YZ / \bar{e}^*}{\PEmpirical Z / \hat{e}(X)} & =_{i} \frac{\PEmpirical (Y - \hat{\gamma}^{\top} g(X)) Z / \bar{e}^* + \PEmpirical \hat{\gamma}^{\top} g(X) Z / \bar{e}^*}{\PEmpirical Z / \hat{e}(X)}\\
    & =_{ii} \frac{\PEmpirical (Y - \hat{\gamma}^{\top} g(X)) Z / \bar{e}^* + \PEmpirical \hat{\gamma}^{\top} g(X) Z / \hat{e}(X)}{\PEmpirical Z / \hat{e}(X)}\\
    & = _{iii} \frac{\mathbb{E}_n (Y - \hat{\gamma}^{\top} g(X)) Z (1 + \Lambda^{\hat{V}}(1-\hat{e}(X))/\hat{e}(X)) + \mathbb{E}_n \hat{\gamma}^{\top} g(X) Z / \hat{e}(X) }{\PEmpirical Z / \hat{e}(X)}
\end{align*}
There, step $i$ adds and subtracts the term $\PEmpirical \hat{\gamma}^{\top} g(X) Z / \bar{e}^*$ in the numerator, step $ii$ uses the fact that $\bar{e}^*$ ``balances" $g(X)$, and step $iii$ restates $\bar{e}^*$ in terms of $\hat{V}$. Since $\frac{\PEmpirical YZ / \bar{e}^*}{\PEmpirical Z / \hat{e}(X)}$ is the objective value from (\ref{general_balancing}), this proves Lemma \ref{lemma:characterizing_optimum}.
\end{proof}

\subsection{Proof of Theorem \ref{theorem:sharpness} for linear quantiles} \label{proof:theorem:sharpness_linear}

In this section, we give the proof of Theorem \ref{theorem:sharpness} under the assumption that $\hat{Q}_{\tau}(x, z) = \hat{\beta}(z)^{\top} h(x)$ for some ``features" $h : \X \rightarrow \R^k$ with finite variance. Results for $K$-fold cross-fit linear estimates hold by viewing the folds as random and interacting the features with the fold identities to produce features in $\R^{k * K}$. We assume throughout that $h$ contains an ``intercept", i.e. $h_1(x) \equiv 1$.  For simplicity, we only give the arguments for the estimator $\hat{\psi}_{\textup{T}}^+$.  Results for other quantile balancing bounds follow by essentially the same arguments.  Since this estimator only involves a single estimated quantile function, we will lighten the notation by writing $Q(x)$ and $\hat{Q}(x)$ in place of $Q_{\tau}(x, 1)$ and $\hat{Q}_{\tau}(x, 1)$.

\subsubsection{Supporting lemmas}

The proofs will make use of several easy lemmas.  

\begin{lemma}
\label{lemma:beta_convergence}
Assume that Conditions \ref{condition:ipw_conditions} and \ref{condition:density} hold, and also that $Q(x) = \beta_0^{\top} h(x)$ for some $\beta_0 \in \R^d$.  Further suppose that $\E[ h(X) h(X)^{\top}]$ is finite and nonsingular.  Let $\hat{\gamma}$ minimize the loss function $\L_n(\gamma) = \PEmpirical \rho_{\tau}(Y - \gamma^{\top} h(X)) Z \tfrac{1 - \hat{e}(X)}{\hat{e}(X)}$.  Then $\hat{\gamma} \xrightarrow{p} \beta_0$.
\end{lemma}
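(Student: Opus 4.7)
The plan is a standard consistency argument for convex M-estimators, in the spirit of Pollard (1991) and Knight (1998).  Define the population loss
\begin{align*}
\L(\gamma) := \E\!\left[ \rho_\tau(Y - \gamma^\top h(X))\, Z\, \tfrac{1 - e(X)}{e(X)} \right].
\end{align*}
The argument has three ingredients: (i) identify $\beta_0$ as the unique minimizer of $\L$; (ii) show that $\L_n(\gamma) \xrightarrow{p} \L(\gamma)$ pointwise in $\gamma$; (iii) invoke the convexity lemma to upgrade pointwise convergence of convex objectives to convergence of minimizers.

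For (i), conditioning on $X$ and using the tower rule gives
\begin{align*}
\L(\gamma) = \E\!\left[ (1 - e(X))\, \E[ \rho_\tau(Y - \gamma^\top h(X)) \mid X, Z = 1] \right].
\end{align*}
Since the $\tau$-th conditional quantile of $Y \mid X, Z = 1$ equals $Q(X) = \beta_0^\top h(X)$, the inner expectation is minimized (over linear combinations $\gamma^\top h(X)$) at $\gamma = \beta_0$.  Strict convexity in a neighborhood of $\beta_0$ follows from Condition \ref{condition:density} (positive conditional density of $Y$ at $Q(X)$), and the nonsingularity of $\E[h(X) h(X)^\top]$ promotes this into global strict convexity of $\L$ at $\beta_0$.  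Hence $\beta_0$ is the unique minimizer of $\L$.

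For (ii), I would first replace $\hat{e}$ by $e$ inside $\L_n$.  Using Condition \ref{condition:ipw_conditions}, both $e$ and $\hat{e}$ are bounded away from $0$ and $1$, so $|\tfrac{1-\hat{e}(X)}{\hat{e}(X)} - \tfrac{1-e(X)}{e(X)}| \precsim_P \|\hat{e} - e\|_\infty = o_P(1)$.  Combined with the Lipschitz bound $|\rho_\tau(u)| \leq |u|$ and the moment bound $\E[|Y - \gamma^\top h(X)|] < \infty$ (from finite variance of $Y$ and finite variance of $h(X)$), this gives $\L_n(\gamma) - \PEmpirical \rho_\tau(Y - \gamma^\top h(X)) Z \tfrac{1-e(X)}{e(X)} \xrightarrow{p} 0$ for each fixed $\gamma$.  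Pointwise convergence of the remaining quantity to $\L(\gamma)$ is then the ordinary weak law of large numbers.

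For (iii), $\gamma \mapsto \L_n(\gamma)$ is convex (as $\rho_\tau$ is convex and the weights are nonnegative), and $\L$ is convex with unique minimizer $\beta_0$.  The convexity lemma (Pollard 1991, Lemma II; or Hjort--Pollard) then yields $\hat{\gamma} \xrightarrow{p} \beta_0$, provided $\hat{\gamma}$ is tight; tightness is automatic since $\L$ has a strict minimum at $\beta_0$ (any subsequence of $\hat{\gamma}$ escaping to infinity would violate pointwise convergence at some $\gamma$ near $\beta_0$).

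The main obstacle to watch for is the presence of the estimated weights $\hat{e}$ inside the objective, since classical quantile regression consistency is stated for fixed weights; however, uniform consistency of $\hat{e}$ together with the $\varepsilon$--$(1-\varepsilon)$ bound in Condition \ref{condition:ipw_conditions} reduces this to a negligible perturbation that does not affect the limit $\L$, so no deeper empirical process machinery is needed beyond the convexity lemma.
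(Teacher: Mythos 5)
Your proposal is correct and follows essentially the same route as the paper's proof: identify $\beta_0$ as the unique minimizer of the population loss $\L(\gamma) = \E[(1-e(X))\,\E[\rho_\tau(Y - \gamma^\top h(X)) \mid X, Z=1]]$ via Condition \ref{condition:density}, establish pointwise convergence $\L_n(\gamma) \xrightarrow{p} \L(\gamma)$ by swapping $\hat{e}$ for $e$ using Condition \ref{condition:ipw_conditions} and then applying the law of large numbers, and finish with a convexity-based consistency result for M-estimators (the paper cites Theorem 2.7 of Newey--McFadden where you cite Pollard's convexity lemma, but these play the identical role). The only cosmetic difference is that you bound the weight perturbation via $\|\hat{e}-e\|_\infty$ while the paper uses a Cauchy--Schwarz bound in $L^2(\PEmpirical)$; both are immediate from the uniform consistency assumed in Condition \ref{condition:ipw_conditions}.
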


\begin{proof}
Define the population loss function $\L$ by:
\begin{align*}
\L(\gamma) &= \E_{\PSample}[ \rho_{\tau}(Y - \gamma^{\top} h(X)) Z \tfrac{1 - e(X)}{e(X)}]\\
&= \E_{\PSample}[ (1 - e(X)) \E[\rho_{\tau}(Y - \gamma^{\top} h(X)) | X, Z = 1]]
\end{align*}
By Condition \ref{condition:density}, $\beta_0$ is the unique minimizer of $\E[\rho_{\tau}(Y - \gamma^{\top} h(X)) | X = x, Z = 1]$ for each $x \in \mathcal{X}$, and hence the unique minimizer of $\L$.

We will show that $\L_n$ converges to $\L$ pointwise in probability.  For each $\gamma \in \R^d$, we have:
\begin{align*}
\L_n(\gamma) &= \PEmpirical \rho_{\tau}(Y - \gamma^{\top} h(X)) Z \tfrac{1 - \hat{e}(X)}{\hat{e}(X)}\\
&= \PEmpirical \rho_{\tau}(Y - \gamma^{\top} h(X)) Z \tfrac{1 - e(X)}{e(X)} + \PEmpirical \rho_{\tau}(Y - \gamma^{\top} h(X)) Z (1/\hat{e}(X) - 1/e(X))\\
&= \PEmpirical \rho_{\tau}(Y - \gamma^{\top} h(X)) Z \tfrac{1 - e(X)}{e(X)} + \mathcal{O}( || \rho_{\tau}(y - \gamma^{\top} h(x)) ||_{L^2(\mathbb{P}_n)} || 1/\hat{e} - 1 / e||_{L^2(\mathbb{P}_n)})\\
&= \L(\gamma) + o_P(1)
\end{align*}
where the last step is by the law of large numbers and Condition \ref{condition:ipw_conditions}.  The conclusion $\hat{\gamma} \xrightarrow{p} \beta_0$ now follow from general consistency results for convex M-estimators, e.g. Theorem 2.7 in \cite{newey_mcfadden}.
\end{proof}

\begin{lemma}
\label{lemma:upper_bound}
Let $\hat{U}_i = \textup{sign}(Y_i - \hat{Q}(X_i))$.  Then we have the inequality:
\begin{align}
\hat{\psi}_{\T}^+ &\leq \frac{\PEmpirical (Y - \hat{Q}(X)) Z (1 + \Lambda^{\hat{U}} (1 - \hat{e}(X))/\hat{e}(X)) + \PEmpirical \hat{Q}(X) Z / \hat{e}(X)}{\PEmpirical Z / \hat{e}(X)} \label{upper_bound}
\end{align}
\end{lemma}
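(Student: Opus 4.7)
The plan is to prove the inequality by an algebraic decomposition that first eliminates the balancing constraints and then relaxes the feasible set from $\mathcal{E}_n(\Lambda)$ intersected with the balancing constraints down to $\mathcal{E}_n(\Lambda)$ alone. Since the right-hand side of (\ref{upper_bound}) is a pointwise supremum rather than an actual optimal value of the quantile balancing problem, we only need an upper bound, not equality, and this relaxation is precisely the source of the inequality.

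First I would rewrite the objective of (\ref{qbalance_psiT}) evaluated at a feasible $\bar{e} \in \mathcal{E}_n(\Lambda)$ by adding and subtracting $\hat{Q}(X_i)$:
\begin{align*}
\frac{\PEmpirical YZ/\bar{e}}{\PEmpirical Z/\bar{e}} = \frac{\PEmpirical (Y - \hat{Q}(X)) Z/\bar{e} + \PEmpirical \hat{Q}(X) Z / \bar{e}}{\PEmpirical Z / \bar{e}}.
\end{align*}
The two balancing constraints in (\ref{qbalance_psiT}) --- namely $\PEmpirical \hat{Q}(X) Z/\bar{e} = \PEmpirical \hat{Q}(X) Z/\hat{e}(X)$ and $\PEmpirical Z/\bar{e} = \PEmpirical Z/\hat{e}(X)$ --- let me replace the $\hat{Q}$-term in the numerator and the entire denominator by their fixed values, so the objective becomes
\begin{align*}
\frac{\PEmpirical (Y - \hat{Q}(X)) Z/\bar{e} + \PEmpirical \hat{Q}(X) Z/\hat{e}(X)}{\PEmpirical Z / \hat{e}(X)}.
\end{align*}
At this point the only variable part is $\PEmpirical(Y - \hat{Q}(X)) Z / \bar{e}$ in the numerator.

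Next I would bound this variable part by dropping the two balancing constraints and maximizing only over $\bar{e} \in \mathcal{E}_n(\Lambda)$. The definition of $\mathcal{E}_n(\Lambda)$ is equivalent to the pointwise box constraint $1 + \tfrac{1-\hat{e}(X_i)}{\hat{e}(X_i)} \Lambda^{-1} \leq 1/\bar{e}_i \leq 1 + \tfrac{1-\hat{e}(X_i)}{\hat{e}(X_i)} \Lambda$, so the relaxed problem separates across $i$ and is linear in each $1/\bar{e}_i$. The pointwise maximum is attained by choosing $1/\bar{e}_i$ at its upper endpoint when $(Y_i - \hat{Q}(X_i)) Z_i > 0$ and at its lower endpoint when $(Y_i - \hat{Q}(X_i)) Z_i < 0$, with the value on the event $Y_i = \hat{Q}(X_i)$ irrelevant. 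In both cases this reciprocal weight equals $1 + \tfrac{1 - \hat{e}(X_i)}{\hat{e}(X_i)} \Lambda^{\hat{U}_i}$ with $\hat{U}_i = \textup{sign}(Y_i - \hat{Q}(X_i))$, which yields the upper bound
\begin{align*}
\PEmpirical (Y - \hat{Q}(X)) Z /\bar{e} \leq \PEmpirical (Y - \hat{Q}(X)) Z \bigl(1 + \tfrac{1 - \hat{e}(X)}{\hat{e}(X)} \Lambda^{\hat{U}}\bigr).
\end{align*}
Substituting this back gives exactly (\ref{upper_bound}).

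There is no real obstacle here: the argument is almost entirely algebraic. The only subtle point worth flagging is that this bound is not tight in general, since the maximizer of the relaxed, separable problem need not satisfy the quantile balancing constraint; equality in (\ref{upper_bound}) would require the relaxed maximizer to happen to balance $\hat{Q}$. This gap is harmless for subsequent arguments, which use Lemma \ref{lemma:upper_bound} only as a one-sided bound complementing the lower bound implicit in Proposition \ref{proposition:characterizing_optimum}.
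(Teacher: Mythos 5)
Your proof is correct, but it takes a genuinely different route from the paper's. The paper proves Lemma \ref{lemma:upper_bound} by invoking Proposition \ref{proposition:characterizing_optimum}: the exact optimal value of (\ref{qbalance_psiT}) equals the right-hand side of (\ref{upper_bound}) with $\hat{U}_i$ replaced by $\hat{V}_i = \textup{sign}(Y_i - \hat{\gamma}^{\top}g(X_i))$, the sign of the residual from the \emph{weighted quantile regression} of $Y$ on $(1, \hat{Q})$; the inequality then follows because $(Y_i - \hat{Q}(X_i))\Lambda^{\hat{U}_i} \geq (Y_i - \hat{Q}(X_i))\Lambda^{\hat{V}_i}$ index by index, since $\hat{U}_i$ matches the sign of $Y_i - \hat{Q}(X_i)$ while $\hat{V}_i$ need not. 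You instead bypass Proposition \ref{proposition:characterizing_optimum} entirely: you use the two balancing constraints to freeze the $\hat{Q}$-term and the denominator at their nominal values, then drop the constraints and solve the resulting separable linear maximization over the box $\mathcal{E}_n(\Lambda)$ in closed form. Your argument is more elementary and self-contained (it needs no duality or subdifferential machinery), while the paper's version makes the source of the slack explicit --- the gap between the raw-residual signs $\hat{U}$ and the fitted-residual signs $\hat{V}$ --- which is the quantity that is later shown to vanish when the quantile model is consistent. The only point worth making explicit in your write-up is that dividing by $\PEmpirical Z/\hat{e}(X)$ preserves the inequality because this quantity is positive whenever some $Z_i = 1$ and $\hat{e}(X_i) \in (0,1)$ (and the degenerate all-control case is handled by the paper's $0/0 = 0$ convention).
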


\begin{proof}
By Lemma \ref{lemma:characterizing_optimum}, $\hat{\psi}_{\T}^+$ would be exactly equal to the right-hand side of (\ref{upper_bound}) if $\hat{U}_i$ were replaced by $\hat{V}_i = \textup{sign}(Y_i - \hat{\gamma}_0 - \hat{\gamma}_1 \hat{Q}(X_i))$ where $\hat{\gamma} = (\hat{\gamma}_0, \hat{\gamma}_1)$ comes from $\mathcal{L}_n$ in Lemma \ref{lemma:upper_bound}.  However, $(Y_i - \hat{Q}(X_i)) \Lambda^{\hat{U}_i}$ is (weakly) larger than $(Y_i - \hat{Q}(X_i)) \Lambda^{\hat{V}_i}$ for every $i$, since $\hat{U}_i$ exactly matches the sign of $Y_i - \hat{Q}(X_i)$ while $\hat{V}_i$ might not.  Making this replacement index-by-index gives (\ref{upper_bound}).
\end{proof}

\begin{lemma}
\label{lemma:lower_bound}
Let $U_i = \textup{sign}(Y_i - Q(X_i))$.  Then we have the inequality:
\begin{align}
\hat{\psi}_{\T}^+ &\geq \frac{\PEmpirical (Y - \hat{\gamma}^{\top} h(X)) Z (1 + \Lambda^U (1 - \hat{e}(X))/\hat{e}(X)) + \PEmpirical \hat{\gamma}^{\top} h(X) Z /\hat{e}(X)}{\PEmpirical Z / \hat{e}(X)} \label{lower_bound}
\end{align}
where $\hat{\gamma}$ is as in Lemma \ref{lemma:beta_convergence}.
\end{lemma}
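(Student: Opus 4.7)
The plan is to bound $\hat{\psi}_{\T}^+$ from below by passing to a more tightly constrained variant of the same maximization problem and then applying Proposition \ref{proposition:characterizing_optimum} to that variant. Let $\widetilde{\psi}$ denote the value of problem (\ref{general_balancing}) when the balancing vector $g(x) = (1, \hat{Q}(x))$ is replaced by the full feature vector $h(x)$. Because $\hat{Q}(x)$ is a linear combination of the entries of $h$ and $h_1 \equiv 1$, any $\bar{e}$ satisfying the $h$-balancing constraint automatically satisfies the $g$-balancing constraint, so the feasible region of the enlarged problem is a subset of the one defining $\hat{\psi}_{\T}^+$. This will give $\hat{\psi}_{\T}^+ \geq \widetilde{\psi}$ for free.

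The second step is to identify $\widetilde{\psi}$ exactly by applying Proposition \ref{proposition:characterizing_optimum} with features $h$ in place of $g$. The minimizer of the corresponding weighted quantile regression loss is precisely the $\hat{\gamma}$ appearing in Lemma \ref{lemma:beta_convergence}, and the associated signs are $\hat{V}_i = \textup{sign}(Y_i - \hat{\gamma}^\top h(X_i))$. The proposition then yields
\begin{align*}
\widetilde{\psi} \;=\; \frac{\PEmpirical (Y - \hat{\gamma}^\top h(X)) Z \bigl(1 + \Lambda^{\hat{V}} (1 - \hat{e}(X))/\hat{e}(X)\bigr) + \PEmpirical \hat{\gamma}^\top h(X) Z / \hat{e}(X)}{\PEmpirical Z / \hat{e}(X)}.
\end{align*}

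The third step is to replace $\hat{V}_i$ by the population sign $U_i$ in this display and absorb the resulting slack into the direction of the bound. Viewing $(Y_i - \hat{\gamma}^\top h(X_i)) \Lambda^{s}$ as a function of $s \in \{-1, +1\}$, its maximum is attained at $s = \textup{sign}(Y_i - \hat{\gamma}^\top h(X_i)) = \hat{V}_i$, so $(Y_i - \hat{\gamma}^\top h(X_i))(\Lambda^{\hat{V}_i} - \Lambda^{U_i}) \geq 0$ for every $i$. Multiplying by the non-negative weights $Z_i (1 - \hat{e}(X_i))/\hat{e}(X_i)$, averaging over $i$, and noting the $\PEmpirical \hat{\gamma}^\top h(X) Z / \hat{e}(X)$ term is unaffected by this substitution gives $\widetilde{\psi} \geq$ (right-hand side of (\ref{lower_bound})). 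Chaining the inequalities $\hat{\psi}_{\T}^+ \geq \widetilde{\psi} \geq$ RHS will complete the argument.

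The one place needing care will be the appeal to Proposition \ref{proposition:characterizing_optimum} with features $h$: the underlying Dantzig--Wald result requires $\PEmpirical h(X) Z / \hat{e}(X)$ to lie in the relative interior of $\{ \PEmpirical h(X) Z / \tilde{e} : \tilde{e} \in \mathcal{E}_n(\Lambda) \}$. I expect this to follow from Condition \ref{condition:ipw_conditions} (which forces $\hat{e}(X_i) \in (0,1)$) and the open mapping theorem, exactly as in the original proof of the proposition; the degenerate case $\Lambda = 1$ is handled separately, since then both sides of (\ref{lower_bound}) collapse to the ordinary stabilized IPW estimator.
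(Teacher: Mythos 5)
Your proposal is correct and follows essentially the same route as the paper: lower-bound $\hat{\psi}_{\T}^+$ by the more constrained feature-balancing problem, evaluate that problem exactly via Proposition \ref{proposition:characterizing_optimum} with $g = h$, and then swap $\hat{V}_i$ for $U_i$ using the fact that $(Y_i - \hat{\gamma}^{\top} h(X_i))\Lambda^{s}$ is maximized at $s = \hat{V}_i$. Your explicit justification that $h$-balancing implies $(1,\hat{Q})$-balancing (since $\hat{Q}$ is a linear combination of the entries of $h$) is a slightly more careful version of the paper's ``more constraints'' remark, but the argument is the same.
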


\begin{proof}
For the purposes of this proof, let $\bar{\psi}_{\T}^+$ be the solution to the ``feature-balancing" problem:
\begin{align*}
\bar{\psi}_{\T}^+ &= \max_{\bar{e} \in \mathcal{E}_n(\Lambda)} \frac{\sum_{i = 1}^n Y_i Z_i / \bar{e}_i}{\sum_{i = 1}^n Z_i / \bar{e}_i} \quad \text{s.t.} \quad \PEmpirical h(X) Z / \bar{e} = \PEmpirical h(X) Z / \hat{e}(X).
\end{align*}
It is clear that $\hat{\psi}_{\T}^+ \geq \bar{\psi}_{\T}^+$, since the feature balancing problem has the same objective as the quantile balancing problem but faces more constraints.  Lemma \ref{lemma:characterizing_optimum} implies the $\bar{\psi}_{\T}^+$ would be exactly equal to the right-hand side of (\ref{lower_bound}) if we replaced $U_i$ by $\hat{U}_i = \textup{sign}(Y_i - \hat{\gamma}^{\top} h(X_i))$.  However, $(Y_i - \hat{\gamma}^{\top} h(X_i)) \Lambda^{U_i}$ is (weakly) smaller than $(Y_i - \hat{\gamma}^{\top} h(X_i)) \Lambda^{\hat{U}_i}$ for every $i$, since $\hat{U}_i$ exactly matches the sign of $Y_i - \hat{\gamma}^{\top} h(X_i)$ while $U_i$ might not.  Making this replacement index-by-index gives (\ref{lower_bound}).
\end{proof}

\subsubsection{Proof of main result}

Now we prove Theorem \ref{theorem:sharpness}\ref*{linear}, which we restate to make the regularity conditions more precise.

\begin{manualtheorem}{\ref*{theorem:sharpness}\ref*{linear}}\label{theoremAlt:sharpness:linear}
\textup{\textbf{(Sharpness for $\psi_{\T}^+$)}}\\
Assume Conditions \ref{condition:ipw_conditions}, \ref{condition:density}, and \ref{condition:quantile_estimates}.\ref{linear}.  If $Q(x) = \beta_0^{\top} h(x)$ for some $\beta_0 \in \R^k$ and $\hat{\beta} \xrightarrow{p} \beta_0$, then $\hat{\psi}_{\T}^+ = \psi_{\T}^+ - o_P(1)$.  However, even if $Q(x) \neq \beta^{\top} h(x)$ for any $\beta$, we still have $\hat{\psi}_{\T}^+ \geq \psi_{\T}^+ - o_P(1)$.
\end{manualtheorem}

\begin{proof}
We start by proving the upper bound $\hat{\psi}_{\T}^+ \leq \psi_{\T}^+ + o_P(1)$ in the well-specified case.  Lemma \ref{lemma:upper_bound} gives the following upper bound on the quantile balancing estimator:
\begin{align*}
\hat{\psi}_{\T}^+ &\leq \frac{\PEmpirical (Y - \hat{Q}(X)) Z(1 + \Lambda^{\hat{U}}(1 - \hat{e}(X))/\hat{e}(X)) + \PEmpirical \hat{Q}(X) Z / \hat{e}(X)}{\PEmpirical Z / \hat{e}(X)}.
\end{align*}
Condition \ref{condition:ipw_conditions} implies $\PEmpirical Z / \hat{e}(X) \xrightarrow{p} 1$, and the consistency of $\hat{\beta}$ implies $\PEmpirical \hat{Q}(X) Z / \hat{e}(X) \xrightarrow{p} \E[ Q(X)]$.  To establish the upper bound, it remains to show $\PEmpirical (Y - \hat{Q}(X)) Z (1 + \Lambda^{\hat{U}}(1 - \hat{e}(X))/\hat{e}(X))$ converges to $\psi_{\T}^+ - \E[ Q(X)]$.  

The first step is to replace the estimated propensity score $\hat{e}$ appearing in this quantity by the true nominal propensity score $e$.  The Cauchy-Schwarz inequality and Condition \ref{condition:ipw_conditions} imply:
\begin{align*}
\PEmpirical (Y - \hat{Q}(X)) Z\Lambda^{\hat{U}} (\tfrac{1 - \hat{e}(X)}{\hat{e}(X)}  - \tfrac{1 - e(X)}{e(X)}) &= \mathcal{O}( || Y - \hat{\beta}^{\top} h(X) ||_{L^2(\mathbb{P}_n)} \times || 1 / \hat{e}(X) - 1/e(X) ||_{L^2(\mathbb{P}_n)} )\\
&= \mathcal{O}_P ( (|| Y ||_{L^2(\mathbb{P}_n)} + || \hat{\beta}^{\top} h(X) ||_{L^2(\mathbb{P}_n)}) \times \varepsilon^{-2} || \hat{e}(X) - e(X) ||_{\L^{\infty}(\mathbb{P}_n)})\\
&= \mathcal{O}_P( || Y ||_{L^2(\mathbb{P}_n)} + || Q(X)||_{L^2(\mathbb{P}_n)}) \times o_P(1))\\
&= o_P(1)
\end{align*}
Thus, $\PEmpirical (Y - \hat{Q}(X)) Z (1 + \Lambda^{\hat{U}} \tfrac{1 - \hat{e}(X)}{\hat{e}(X)}) = \PEmpirical (Y - Q(X)) Z(1 + \Lambda^{\hat{U}} \tfrac{1 - e(X)}{e(X)}) + o_P(1)$.

The next step is to replace $\hat{U}$ and $\hat{Q}(X)$ by $U = \text{sign}(Y - Q(X))$ and $Q(X)$, respectively.  For this, we employ a uniform convergence argument.  For each $\beta \in \R^k$, define the function $f_{\beta}(x, y, z)$ by:
\begin{align*}
f_{\beta}(x, y, z) &= (y - \beta^{\top} h(x)) z(1 + \Lambda^{\text{sign}(y - \beta^{\top} h(x))} \tfrac{1 - e(x)}{e(x)}).
\end{align*}
Standard Glivenko-Cantelli (GC) preservation arguments (c.f. \cite{kosorok2008introduction}) show that the class $\F = \{ f_{\beta} \, : \, || \beta - \beta_0 || \leq 1 \}$ is GC, so we have the uniform convergence $\sup_{f \in \F} | \PEmpirical f - \PSample f| = o_P(1)$.  Moreover, the map $\beta \mapsto \PSample f_{\beta}$ is continuous at $\beta_0$, which can be seen by noticing that as $\beta \rightarrow \beta_0$, $f_{\beta}(x, y, z) \rightarrow f_{\beta_0}(x, y, z)$ for almost every $(x, y, z)$ (exceptions occur when $y = \beta_0^{\top} x$, but Condition \ref{condition:density} implies this happens with probability zero) and then applying the dominated convergence theorem.  Thus, we have:
\begin{align*}
\PEmpirical (Y - \hat{Q}(Z)) Z(1 + \Lambda^{\hat{U}} \tfrac{1 - e(X)}{e(X)}) &= \PEmpirical f_{\hat{\beta}}(X, Y, Z)\\
&= \PSample f_{\hat{\beta}}(X, Y, Z) + o_P(1)\\
&= \PSample f_{\beta_0}(X, Y, Z) + o_P(1)\\
&= \E[ (Y - Q(X)) Z / \bar{E}_+] + o_P(1)\\
&= \psi_{\T}^+ - \E[ Q(X)] + o_P(1)
\end{align*}
Combining these various results gives $\hat{\psi}_{\T}^+ \leq  \psi_{\T}^+ + o_P(1)$.  This establishes the upper bound in the well-specified case.

Now we turn to the lower bound, $\hat{\psi}_\T^+ \geq \psi_\T^+ - o_P(1)$, beginning in the correctly-specified case.  Lemma \ref{lemma:lower_bound} lower bounds the quantile balancing estimator by a variant of the ``feature balancing" estimator:
\begin{align*}
\hat{\psi}_{\T}^+ \geq \frac{\PEmpirical(Y - \hat{\gamma}^{\top} h(X)) Z(1 + \Lambda^U (1 - \hat{e}(X))/\hat{e}(X)) + \PEmpirical \hat{\gamma}^{\top} h(X) Z / \hat{e}(X)}{\PEmpirical Z / \hat{e}(X)}.
\end{align*}

We will show that this lower bound is at least $\psi_{\T}^+ - o_P(1)$.
We may assume without loss of generality that $\E[ h(X) h(X)^{\top}]$ is full rank, since excising features that are linear combinations of other ones has no effect on the feature balancing estimator.  In the preceding display, the denominator $\PEmpirical Z / \hat{e}(X)$ converges to one, so we can focus on the two terms in the numerator.  

Since Lemma \ref{lemma:beta_convergence} implies that $\hat{\gamma}$ is consistent, exactly the same arguments from the upper bound show $\PEmpirical \hat{\gamma}^{\top} h(X) Z / \hat{e}(X) \xrightarrow{p} \E[ Q(X)]$.  Moreover, the argument from the upper bound shows that $\hat{e}$ can be replaced by $e$ in the expression $\PEmpirical (Y - \hat{\gamma}^{\top} h(X)) Z (1 + \Lambda^U (1- \hat{e}(X))/\hat{e}(X))$.  Some manipulation shows that $1 + \Lambda^U(1 - e(X))/e(X) = 1/\bar{E}_+$ almost surely, where $\bar{E}_+$ is the worst-case propensity score defined in Proposition \ref{proposition:psiT_formulas}.  Therefore, we may write:
\begin{align*}
\PEmpirical (Y - \hat{\gamma}^{\top} h(X)) Z(1 + \Lambda^U \tfrac{1 - \hat{e}(X)}{\hat{e}(X)}) &= \PEmpirical (Y - \hat{\gamma}^{\top} h(X)) Z / \bar{E}_+ + o_P(1)\\
&= \PEmpirical (Y - Q(X)) Z / \bar{E}_+ + \mathcal{O}_P( || \hat{\gamma} - \beta_0 ||) + o_P(1)\\
&= \psi_\T^+ - \E[Q(X)] + o_P(1)
\end{align*}
Combining these various results gives $\hat{\psi}_{\T}^+ \geq \psi_{\T}^+ - o_P(1)$.  This establishes the lower bound in the well-specified case.

Finally, we extend the lower bound to the misspecified case.  If $Q(x) \neq \beta^{\top} h(x)$ for any $\beta$, then we can lower bound $\hat{\psi}_\T^+$ by the feature-balancing estimator that balances $h(x)$ \textit{and} the true quantile $Q(x)$.  This brings us back to the well-specified case, so the preceding arguments show $\hat{\psi}_{\T}^+ \geq \psi_\T^+ - o_P(1)$.
\end{proof}

\subsection{Proof of Theorem \ref{theorem:sharpness} for nonlinear quantiles} \label{proof:theorem:sharpness_nonlinear}

In this section, we prove Theorem \ref{theorem:sharpness} when quantiles are estimated by a nonlinear model.  As in the case of linear quantiles, we will give the argument for the estimator $\hat{\psi}_\T^+$.  As such, we will continue to use $\hat{Q}(x)$ and $Q(x)$ as shorthand for $\hat{Q}_{\tau}(x, 1)$ and $Q_{\tau}(x, 1)$.

\subsubsection{Regularity conditions}

As alluded to in Condition \ref{condition:quantile_estimates}, we require nonlinear models to be estimated using a form of sample splitting called ``cross-fitting" \citep{doubleML, schick1986, newey_robins_crossfitting}.  We briefly describe the procedure, mostly to fix notation.  

The sample $\{ (X_i, Y_i, Z_i) \}$ is divided into $K$ disjoint ``folds" $\F_1, \cdots, \F_K$ of approximately equal size.  For each $k \in [K]$, a quantile estimate $\hat{Q}_{-k}$ is obtained using observations not in $\F_k$.  Finally, we set $\hat{Q}_i = \sum_{k = 1}^K \hat{Q}_{-k}(X_i) \mathbb{I} \{ i \in \F_k \}$.  In this way, no observation is used to obtain its own quantile estimate.  In the extreme case where $K$ is equal to the sample size, this is simply ``leave-one-out" estimation.  However, in cross-fitting, $K$ is taken to be a fixed constant.

We also require the fitted quantiles $\hat{Q}_i$ to satisfy an additional regularity condition.

\begin{manualcondition}{N}
\label{condition:nonlinear_regularity}
For some $\alpha, \beta > 0$, we have $\max_{i \leq n} | \hat{Q}_i | = o_P(n^{\alpha})$ and $\Pprob(0 < | \hat{Q}_i - \hat{Q}_j | < n^{-\beta} \text{ for some }(i, j)) \rightarrow 0$.
\end{manualcondition}

This condition rules out gross ``outliers" in $\hat{Q}_i$ which are difficult to balance.  The condition $\max_i | \hat{Q}_i| = o_P(n^{\alpha} )$ alone is not sufficient for this, because it is not an affine-invariant assumption.  One can take an arbitrarily poorly-behaved estimate $\hat{Q}$ and scale it to be bounded by one without changing the estimator $\hat{\psi}_\T^+$.  The separation requirement rules out this trick. 

It is not hard to find examples of estimators which satisfy this condition.  For example, under Conditions \ref{condition:ipw_conditions} and \ref{condition:density}, Condition \ref{condition:nonlinear_regularity} is satisfied by any estimator whose fitted values $\{ \hat{Q}_i \}$ only take values in the observed outcomes $\{ Y_i \}$ (e.g. \cite{quantile_random_forest, stone1977, generalized_random_forests} will satisfy it with $\alpha = \tfrac{1}{2}$ and any $\beta > 2$).\footnote{The upper bound follows from the well-known fact that the maximum of $n$ i.i.d. observations from a distribution with finite variance has magnitude $o_P(n^{1/2})$.  Therefore, $\max_i | \hat{Q}_i| \leq \max_j |Y_j| = o_P(n^{1/2})$.  For the lower bound, it suffices to show that $\Pprob( \min_{i \neq j} |Y_i - Y_j| < n^{-\beta}) \rightarrow 0$ whenever $\beta > 2$.  Let $F_Y(y) = \PSample(Y \leq y)$, and let $B < \infty$ be a uniform bound on $F_Y'(\cdot)$;  this exists since $f(y | x, z)$ is uniformly bounded by Condition \ref{condition:density}.  Then $\Pprob( \min_{i \neq j} |Y_i - Y_j| < n^{-\beta}) \leq \Pprob( \Delta \leq B n^{-\beta})$ where $\Delta = \min_{i \neq j} | F_Y(Y_i) - F_Y(Y_j)|$.  Theorem 8.2 in \cite{darling1953} shows that $n^2 \Delta \rightsquigarrow \text{Exponential}(1)$, so $\Pprob( n^2 \Delta \leq B n^{-(\beta - 2)}) \rightarrow 0$.}

\subsubsection{Supporting lemmas}

To simplify the proof, we separate out a preliminary convergence result as a lemma.  Throughout this proof and the next, we will use the following notation:  for a function $f$, $\PEmpirical^k f$ denotes the fold-$k$ average $\tfrac{1}{| \F_k|} \sum_{i \in \F_k} f(X_i, Y_i, Z_i)$.

\begin{lemma}
\label{lemma:l2_consistency}
Assume Condition \ref{condition:ipw_conditions}.  Suppose $|| \hat{Q}_{-k} - Q ||_{L^2(\PSample)} \xrightarrow{p} 0$ for each $k \in [K]$.  Then $|| \hat{Q} - Q ||_{L^2(\mathbb{P}_n)} = o_P(1)$ and $\PEmpirical \hat{Q} Z / \hat{e}(X) = \E[ Q(X)] + o_P(1)$.
\end{lemma}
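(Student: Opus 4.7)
\textbf{Proof plan for Lemma \ref{lemma:l2_consistency}.}

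The plan is to exploit the cross-fitting construction to turn the in-sample objects into expectations under $\PSample$ conditionally on the held-out data.  Write
\[
\PEmpirical (\hat Q - Q)^2 \;=\; \sum_{k=1}^K \frac{|\F_k|}{n}\cdot\frac{1}{|\F_k|}\sum_{i\in\F_k}\bigl(\hat Q_{-k}(X_i) - Q(X_i)\bigr)^2.
\]
Conditional on the auxiliary sample $\{(X_j,Y_j,Z_j):j\notin\F_k\}$ used to fit $\hat Q_{-k}$, the fold observations $\{(X_i,Y_i,Z_i):i\in\F_k\}$ are i.i.d.\ from $\PSample$, so the conditional mean of the inner average equals $\|\hat Q_{-k}-Q\|_{L^2(\PSample)}^2$, which is $o_P(1)$ by hypothesis.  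A routine Markov / conditional-LLN argument (or just bounding the conditional variance using Condition \ref{condition:ipw_conditions} together with $\E Y^2<\infty$) then gives that each fold's empirical average equals its conditional mean plus $o_P(1)$, and summing over the $K$ folds (with $K$ fixed) yields $\|\hat Q-Q\|_{L^2(\PEmpirical)}=o_P(1)$.  This is Part 1.

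For Part 2, I would decompose
\[
\PEmpirical \hat Q\, Z/\hat e(X) \;=\; \underbrace{\PEmpirical (\hat Q-Q)Z/\hat e(X)}_{(\mathrm I)} \;+\; \underbrace{\PEmpirical Q\, Z/\hat e(X)}_{(\mathrm{II})}.
\]
For $(\mathrm I)$, apply Cauchy--Schwarz: it is bounded by $\|\hat Q-Q\|_{L^2(\PEmpirical)}\cdot\|Z/\hat e(X)\|_{L^2(\PEmpirical)}$.  The first factor is $o_P(1)$ by Part 1, and Condition \ref{condition:ipw_conditions} (uniform consistency of $\hat e$ together with the overlap bound $e(X)\ge\varepsilon$) ensures $\hat e(X)$ is bounded away from zero with probability tending to one, so the second factor is $O_P(1)$.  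For $(\mathrm{II})$, replace $\hat e$ by $e$: write $(\mathrm{II}) = \PEmpirical Q Z/e(X) + \PEmpirical Q Z\bigl(1/\hat e(X)-1/e(X)\bigr)$.  The first piece converges to $\E[Q(X)Z/e(X)]=\E[Q(X)]$ by the law of large numbers (using $\E|Q(X)|\le(\E Y^2)^{1/2}<\infty$), and the second is bounded by $\|Q\|_{L^2(\PEmpirical)}\cdot\|1/\hat e-1/e\|_{L^\infty(\PEmpirical)}=O_P(1)\cdot o_P(1)$.  Combining the two parts gives the claim.

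The only real obstacle is justifying the conditional-LLN step in Part 1, since $\hat Q_{-k}$ is random; this is precisely the point of cross-fitting, and the standard device is to observe that $\E[\PEmpirical^k(\hat Q_{-k}-Q)^2 \mid \hat Q_{-k}] = \|\hat Q_{-k}-Q\|_{L^2(\PSample)}^2 = o_P(1)$ and then invoke Markov's inequality conditionally on $\hat Q_{-k}$ (after a further moment bound on $(\hat Q_{-k}-Q)^2$, which follows from $\E Y^2<\infty$ and the fact that $Q(X)$ is an $L^2$ projection, so its $L^2$ norm is controlled).  Everything else is a routine Cauchy--Schwarz / LLN exercise.
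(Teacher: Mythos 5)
Your proposal is correct and follows essentially the same route as the paper: conditional Markov's inequality on $\PEmpirical^k(\hat Q_{-k}-Q)^2$ given the held-out data for the first claim, and a Cauchy--Schwarz decomposition swapping $\hat e$ for $e$ plus the law of large numbers for the second. One small quibble: your aside that $Q(X)$ is an ``$L^2$ projection'' is not right (a conditional quantile minimizes expected check loss, not squared loss), though the moment bound you want, $\E[Q(X)^2]<\infty$, does still follow from $\Var(Y)<\infty$ together with overlap.
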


\begin{proof}
Start with the first claim.  For any $k \in [K]$, applying Markov's inequality conditionally on $\{ (X_i, Y_i, Z_i) \}_{i \not \in \F_k}$ gives $\PEmpirical^k ( \hat{Q}_{-k}(X) - Q(X))^2 = \mathcal{O}_P(|| \hat{Q}_{-k} - Q ||_{L^2(\PSample)}^2 ) = o_P(1)$.  Averaging over $k \in [K]$ gives the desired result.

For the second claim, write: 
\begin{align*}
\PEmpirical \hat{Q} Z / \hat{e}(X) &=  \PEmpirical Q Z / e(X) + \PEmpirical ( \hat{Q} - Q) Z / e(X) + \mathcal{O}( || \hat{Q} ||_{L^2(\mathbb{P}_n)} || 1/\hat{e} - 1/e ||_{L^2(\mathbb{P}_n)}) \\
&= \E[ Q(X)] + \mathcal{O}(|| \hat{Q} - Q||_{L^2(\mathbb{P}_n)} / \varepsilon) + o_P(1)\\
&= \E[ Q(X)] + o_P(1).
\end{align*}
\end{proof}

\subsubsection{Proof of main result}

Now we are ready to prove Theorem \ref{theorem:sharpness} for nonlinear quantile models in the case of the estimand $\psi_\T^+$.  We restate the result to make the quantile consistency assumption precise.

\begin{manualtheorem}{\ref*{theorem:sharpness}\ref*{crossfit}}\label{theoremAlt:sharpness:crossfit}
Assume Conditions \ref{condition:ipw_conditions}, \ref{condition:density}, \ref{condition:quantile_estimates}.\ref{crossfit}, and \ref{condition:nonlinear_regularity}.  If $|| \hat{Q}_{-k} - Q ||_{L^2(\PSample)} = o_P(1)$ for each $k \in [K]$, then $\hat{\psi}_\T^+ = \psi_\T^+ - o_P(1)$.  However, even if $|| \hat{Q}_{-k} - Q ||_{L^2(\PSample)} \not \rightarrow 0$, we still have $\hat{\psi}_\T^+ \geq \psi_\T^+ - o_P(1)$.
\end{manualtheorem}

\begin{proof}
We start by proving $\hat{\psi}_\T^+ \leq \psi_\T^+ + o_P(1)$ when the quantile model is consistent.  This part of the proof follows roughly the same template as the corresponding proof in the linear case.  Lemma \ref{lemma:upper_bound} implies:
\begin{align*}
\hat{\psi}_\T^+ &\leq \frac{\PEmpirical(Y - \hat{Q}) Z(1 + \Lambda^{\hat{U}}(1 - \hat{e}(X))/\hat{e}(X)) + \PEmpirical \hat{Q} Z / \hat{e}(X)}{\PEmpirical Z / \hat{e}(X)}
\end{align*}
where $\hat{U}_i = \text{sign}(Y_i - \hat{Q}_i)$.  Since $\PEmpirical Z / \hat{e}(X) \xrightarrow{p} 1$ by Condition \ref{condition:ipw_conditions} and $\PEmpirical \hat{Q} Z / \hat{e}(X) \xrightarrow{p} \E[Q(X)]$ by Lemma \ref{lemma:l2_consistency}, it remains to show that $\PEmpirical (Y - \hat{Q})Z(1 + \Lambda^{\hat{U}} (1 - \hat{e}(X))/\hat{e}(X))$ converges to $\psi_{\T}^+ + o_P(1)$.  By the same reasoning as in the linear case, we may replace $\hat{e}(X)$ by $e(X)$ in this quantity without changing its value much.  Thus, we may write:
\begin{align*}
\PEmpirical (Y - \hat{Q}) Z(1 + \Lambda^{\hat{U}} \tfrac{1 - \hat{e}(X)}{\hat{e}(X)}) &= \PEmpirical (Y - \hat{Q}) Z(1 + \Lambda^{\hat{U}} \tfrac{1 - e(X)}{e(X)}) + o_P(1)\\
&=_i \PEmpirical (Y - Q(X)) Z (1 + \Lambda^{\hat{U}} \tfrac{1 - e(X)}{e(X)}) + \mathcal{O}( \varepsilon^{-1} || \hat{Q}(X) - Q(X) ||_{L^2(\mathbb{P}_n)} ) + o_P(1)\\
&=_{ii} \PEmpirical (Y - Q(X)) Z (1 + \Lambda^{\hat{U}} \tfrac{1 - e(X)}{e(X)}) + o_P(1)\\
&=_{iii} \PEmpirical (Y - Q(X)) Z / \bar{E}_+ + \mathcal{O}( || Y - Q(X) ||_{L^2(\mathbb{P}_n)}|| Z \Lambda^{\hat{U}} - Z \Lambda^U ||_{L^2(\mathbb{P}_n)}) + o_P(1)\\
&=_{iv} \psi_\T^+ - \E[ Q(X)] + \mathcal{O}_P( || Z \Lambda^{\hat{U}} - Z \Lambda^U ||_{L^2(\mathbb{P}_n)} ) + o_P(1)
\end{align*}
Here, $i$ adds and subtracts a term then applies Cauchy-Schwarz, $ii$ applies Lemma \ref{lemma:l2_consistency} to conclude $|| \hat{Q} - Q ||_{L^2(\mathbb{P}_n)} = o_P(1)$, $iii$ adds and subtacts $\PEmpirical (Y - Q(X)) Z / \bar{E}_+$ and applies Cauchy-Schwarz, and $iv$ holds by Proposition \ref{proposition:psiT_formulas} and the law of large numbers.

It remains to prove that $|| Z \Lambda^{\hat{U}} - Z \Lambda^U ||_{L^2(\mathbb{P}_n)} = o_P(1)$, or equivalently (up to constants) that $\PEmpirical Z \mathbb{I} \{ \hat{U} \neq U \} = o_P(1)$.  For each $k \in [K]$, we may apply Chebyshev's inequality conditional on $\{ (X_i, Y_i, Z_i) \}_{i \not \in \F_k}$ to conclude:
\begin{align*}
\left| \PEmpirical^k Z \mathbb{I} \{ \hat{U} \neq U \} - \int z \mathbb{I} \{ \text{sign}(y - \hat{Q}_{-k}(x)) \neq \text{sign}(y - Q(x)) \} \, \d \PSample(x, y, z) \right| = o_P(1)
\end{align*}
The integral in the preceding display tends to zero in probability.  To see this, recall that Condition \ref{condition:density} requires the conditional density $f(y | x, z)$ to be uniformly bounded by some $B < \infty$, so we may write:
\begin{align*}
\int z \mathbb{I} \{ \text{sign}(y - \hat{Q}_{-k}(x)) \neq \text{sign}(y - Q(x)) \} \d \PSample(x, y, z) &= \int_{\X} e(x) \int_{\hat{Q}_{-k}(x) \wedge Q(x)}^{\hat{Q}_{-k}(x) \vee Q(x)} f(y | x, 1) \, \d y \, \d P_X(x)\\
&\leq \int_X (1 - \varepsilon) B | \hat{Q}_{-k}(x) - Q(x) | \, \d P_X(x)\\
&\precsim || \hat{Q}_{-k} - Q ||_{\L^1(\PSample)}\\
&\leq || \hat{Q}_{-k} - Q ||_{L^2(\PSample)}\\
&= o_P(1).
\end{align*}
Thus, $\PEmpirical^k Z \mathbb{I} \{ \hat{U} \neq U \} = o_P(1)$.  Averaging over $k$ gives $\PEmpirical Z \mathbb{I} \{ \hat{U} \neq U \} = o_P(1)$, and so $\hat{\psi}_\T^+ \leq \psi_{\T}^+ + o_P(1)$.

Now, we turn to the lower bound, which is substantially more difficult.  We wish to show $\hat{\psi}_{\T}^+ \geq \psi_{\T}^+ - o_P(1)$ whether or not $\hat{Q}_{-k}$ converges to $Q$.  For each $k \in [K]$, define $\hat{\psi}^+(k)$ by:
\begin{align}
\hat{\psi}^+(k) = \max_{\bar{e}_k \in \mathcal{E}_{n,k}(\Lambda)} \PEmpirical^k YZ / \bar{e}_k \quad \text{subject to} \quad \binom{\PEmpirical^k \hat{Q}_{-k} Z/\bar{e}_k}{\PEmpirical^k Z / \bar{e}_k} = \binom{\PEmpirical^k \hat{Q}_{-k} Z / \hat{e}}{\PEmpirical^k Z / \hat{e}(X)} 
\label{fold_k_optimization}
\end{align}
where $\mathcal{E}_{n,k}(\Lambda)$ is the projection of $\mathcal{E}_n(\Lambda)$ onto the coordinates in $\F_k$.  Clearly, $\hat{\psi}_\T^+ \times \PEmpirical Z / \hat{e}(X) \geq \sum_k \hat{\psi}_+(k) |\F_k|/n$, so it suffices to prove $\hat{\psi}^+(k) \geq \psi_\T^+ - o_P(1)$ for each $k$.  

We will make some notational simplifications. The remainder of the proof will focus on showing $\hat{\psi}_\T^+(1) \geq \psi_\T^+ - o_P(1)$.  For convenience, we will assume $\F_1 = [n_1]$ where $n_1 \sim n/K$ almost surely.  As an additional simplification, we will assume that $\varepsilon/2 \leq \hat{e}_i \leq 1 - \varepsilon/2$ for all $i$.  Mechanically, this can always be done by ``trimming" the estimated propensity score.  Condition \ref{condition:ipw_conditions} implies the trimming has no effect in large samples, so it is only used as a theoretical device to simplify calculations.  Finally, recall that we have defined $\hat{W}_i = Z_i (1 - \hat{e}_i)/\hat{e}_i$. 

We will construct an propensity vector $\bar{e}^*$ satisfying the constraints of (\ref{fold_k_optimization}) with the property that $\bar{\psi}^1 := \PEmpirical^1 YZ / \bar{e}^*$ converges to $\psi_\T^+$.  Since $\hat{\psi}^+(1) \geq \bar{\psi}^1$, this will show $\hat{\psi}^+(1) \geq \psi_\T^+ - o_P(1)$.  A natural first idea is to take the idealized propensity score $\bar{e}^*_i = (1 + \theta_i (1 - \hat{e}(X_i))/\hat{e}(X_i))^{-1}$, where $\theta_i = \Lambda^{U_i}$.  This mimics the true worst-case propensity score, but uses $\hat{e}(X_i)$ in place of $e(X_i)$ to satisfy the odds-ratio constraint.  It is not hard to see that this would result in a sharp estimate of $\psi_\T^+$ by classic IPW logic.
\begin{align}
\begin{split}
\PEmpirical^1 YZ(1 + \theta \tfrac{1 - \hat{e}(X)}{\hat{e}(X)}) &= \PEmpirical^1 YZ (1 + \theta \tfrac{1 - e(X)}{e(X)}) + \mathcal{O}( || YZ ||_{\L^1(\PEmpirical)} \times || 1/e - 1/\hat{e} ||_{\infty})\\
&= \PEmpirical^1 YZ / \bar{E}_+ + o_P(1)\\
&= \psi_\T^+ + o_P(1) \label{oracle_consistency}
\end{split}
\end{align}
However, this choice of $\bar{e}^*$ is not guaranteed to satisfy the ``balancing" constraints of (\ref{fold_k_optimization}).  Our construction perturbs this ``ideal" choice to gain feasibility.  

Our construction will be somewhat convoluted, so it is worth taking a moment to explain the high-level idea. First, we discard a small number of gross ``outliers" to produce a set of ``inliers" $\I_{j^*}$ whose fitted quantiles are relatively easy to balance. We then produce a feasible propensity $\bar{e}^*$ by assigning the outliers the nominal propensity score $\hat{e}(X_i)$ and perturbing the inliers' idealized propensity score by a small amount. We show the resulting lower bound $\bar{\psi}^1 = \PEmpirical^1 YZ / \bar{e}^*$ is a consistent (albeit impractical) estimator of $\psi_{\T}^+$.

We start by extracting a set of inliers $\I_{j^*} \subseteq [n_1]$ in the following fashion:  set $\I_1 = [n_1]$, and for $2 \leq j \leq 4 \beta + 3$, recursively define $\I_{j}$ by:
\begin{align}
    \I_j = \{ i \in \I_{j - 1} \, : \, | \hat{Q}_i - \bar{Q}_{j - 1}| \leq 2^{(j-1)} n^{-(j-1)/4} \} \label{Ij_definition}
\end{align}
where $\bar{Q}_{j-1} = (\sum_{i \in \I_{j-1}} \hat{W}_i \hat{Q}_i)/(\sum_{i \in \I_{j-1}} \hat{W}_i)$ is the weighted average value of $\hat{Q}_i$ within $\I_{j-1}$.  We set $\I_{4 \beta + 4} = \emptyset$.   Let $j^*$ be the first stage in the above procedure at which an $n_1^{-1/8}$ fraction of the ``weight" in $\I_{j}$ comes from outliers:
\begin{align}
    j^* = \min \bigg\{ j \, : \, \frac{\sum_{i \in \I_{j} \backslash \I_{j+1}} \hat{W}_i}{\sum_{i \in \I_{j}} \hat{W}_i} \geq n_1^{-1/8}  \bigg\} \label{jstar}
\end{align}
It is easy to verify that $j^*$ is well-defined (the set is not empty) whenever $Z_i = 1$ for some index $i \leq n_1$. For completeness, when that does not happen, we arbitrarily set $j^* = 4 \beta + 3$. 

With this definition of $j^*$, we ensure the total ``weight" on discarded outliers is asymptotically negligible. Since $\sum_{i \in \I_j \backslash \I_{j + 1}} \hat{W}_i \leq n_1^{-1/8} \sum_{i \in \I_j} \hat{W}_i$ for all $j < j^*$, we have:
\begin{align*}
\sum_{i \not \in \I_{j^*}} \hat{W}_i & = \sum_{j < j^*} \sum_{i \in \I_j \backslash \I_{j+1}} \hat{W}_i \leq (4 \beta + 2) n_1^{-1/8} \sum_{i \in \I_1} Z_i (1 - \hat{e}_i)/\hat{e}_i = o_P(n_1).
\end{align*}
Therefore the  inliers $\I_{j^*}$ will constitute most of the ``weight" in the sample, i.e.
\begin{align}
\frac{1}{n_1} \sum_{i \in \I_{j^*}} \hat{W}_i &= \frac{1}{n_1} \sum_{i = 1}^{n_1} \hat{W}_i - o_P(1) \geq (2/\varepsilon) \frac{1}{n_1} \sum_{i = 1}^{n_1} Z_i - o_P(1)
\label{weight_distribution}
\end{align}

We now perturb the idealized propensity for inliers in $\I_{j^*}$. Set $R_i = (\hat{Q}_i - \bar{Q}_{j^*}) \mathbb{I} \{ j^* \neq 4 \beta + 3 \} + \mathbb{I} \{ j^* = 4 \beta + 3 \}$, and define $\lambda_1, \lambda_2, \alpha$ by:
\begin{align*}
\lambda_1 &= \frac{\sum_{i \in \I_{j^*}} \hat{W}_i R_i (1 - \theta_i)}{\sum_{i \in \I_{j^*}} \hat{W}_i | R_i|} \times (1 + \mathbb{I} \{ j^* \neq 4 \beta + 3 \})\\
\lambda_2 &= \frac{\sum_{i \in \I_{j^*}} \hat{W}_i(1 - \theta_i - \lambda_1 \mathbb{I} \{ R_i \geq 0 \})}{\sum_{i \in \I_{j^*}} \hat{W}_i}\\
\alpha &= \min \{ 1, (| \lambda_1| + | \lambda_2|)/(1 - \Lambda^{-1})  \}.
\end{align*}
Finally, construct $\bar{e}^*$ by:
\begin{align*}
1/\bar{e}^* = \left\{
\begin{array}{ll}
1/\hat{e}_i &\text{if } i \not \in \I_{j^*}\\
1 + \tfrac{1 - \hat{e}_i}{\hat{e}_i} (\alpha + (1 - \alpha)( \theta_i + \lambda_1 \mathbb{I} \{ R_i \geq 0 \} + \lambda_2)) &\text{if } i \in \I_{j^*}.
\end{array}
\right.
\end{align*}

We may verify that, with probability tending to one, we were successful in satisfying the constraints of (\ref{fold_k_optimization}).  

The odds-ratio condition is satisfied as follows. If $\alpha = 1$ or $i \centernot{\in} \I_{j^*}$, the odds ratio condition for $i$ is satisfied trivially, so we proceed assuming $\alpha = \frac{|\lambda_1| + |\lambda_2|}{\Lambda^{-1} - 1}$ and $i \in \I_{j^*}$. For the upper portion of the odds-ratio condition:
\begin{align*}
    \frac{(1-\bar{e}_i) / \bar{e}_i}{(1-\hat{e}_i)/\hat{e}_i} & = \alpha + (1-\alpha)\left( \theta_i + \lambda_i \mathbb{I}\{R_i \geq 0\} + \lambda_2 \right) \\ 
    & \leq \alpha \left( 1 - \Lambda + \Lambda \right) + (1-\alpha)\left( \Lambda + |\lambda_1| + |\lambda_2| \right) \\
    & = \Lambda + \left( |\lambda_1| + |\lambda_2| \right) \left( \frac{1-\Lambda}{1-\Lambda^{-1}} + (1-\alpha) \right) \\
    & \leq \Lambda + \left( |\lambda_1| + |\lambda_2| \right) \left( -\Lambda + 1 \right) \\
    & \leq \Lambda 
\end{align*}
For the lower portion of the odds-ratio condition:
\begin{align*}
    \frac{(1-\bar{e}_i) / \bar{e}_i}{(1-\hat{e}_i)/\hat{e}_i} & = \alpha + (1-\alpha)\left( \theta_i + \lambda_i \mathbb{I}\{R_i \geq 0\} + \lambda_2 \right) \\ 
    & \geq \alpha \left( 1 - \Lambda^{-1} + \Lambda^{-1} \right) + (1-\alpha)\left( \Lambda^{-1} - |\lambda_1| - |\lambda_2| \right) \\
    & = \Lambda^{-1} + \left( |\lambda_1| + |\lambda_2| \right) \left( \frac{1-\Lambda^{-1}}{1-\Lambda^{-1}} + \alpha - 1 \right) \\
    & = \Lambda^{-1} + \alpha \left( |\lambda_1| + |\lambda_2| \right) \\
    & \geq \Lambda^{-1}
\end{align*}

We now proceed to balancing. If $\sum_{i = 1}^{n_1} \hat{W}_i = 0$, we balance everything vacuously, so we proceed assuming otherwise. Our first substantive calculation verifies that $\bar{e}^*$ balances ones, i.e. $\PEmpirical^1 Z / \bar{e}^* = \PEmpirical^1 Z/ \hat{e}(X)$:
\begin{align*}
\PEmpirical^1 (Z/\bar{e}^* - Z/\hat{e}(X)) &= \frac{1}{n_1} \sum_{i \in \I_{j^*}} \hat{W}_i (\alpha + (1 - \alpha) (\theta_i + \lambda_1 \mathbb{I} \{ R_I \geq 0 \} + \lambda_2) - 1)\\
&= (1 - \alpha) \frac{1}{n_1} \bigg( \lambda_2 \sum_{i \in \I_{j^*}} \hat{W}_i - \sum_{i \in \I_{j^*}} \hat{W}_i(1 - \theta_i - \lambda_1 \mathbb{I} \{ R_i \geq 0 \}) \bigg)\\
&= 0
\end{align*}
The final equality holds by the definition of $\lambda_2$. 

To verify that $\bar{e}^*$ also balances $\hat{Q}_{-k}$ with probability tending to one, we use the following decomposition:
\begin{align}
\PEmpirical^1  \hat{Q} Z(1/\bar{e}^* - 1/\hat{e}(X)) &= \mathbb{I} \{ j^* \neq 4 \beta + 3 \} \times \bar{Q}_{j^*} \PEmpirical^1 Z(1/\bar{e}^* - 1/\hat{e}(X)) \label{balanceQbar}\\
&+ \mathbb{I} \{ j^* \neq 4 \beta + 3 \} \times \PEmpirical^1 (\hat{Q} - \bar{Q}_{j^*}) Z(1/\bar{e}^* - 1/\hat{e}(X)) \label{balanceR}\\
&+ \mathbb{I} \{ j^* = 4 \beta + 3 \} \times \PEmpirical^1 \hat{Q} Z(1/\bar{e}^* - 1/\hat{e}(X)) \label{all_equal}
\end{align}
Since $\bar{e}^*$ balances constants, $(\ref{balanceQbar})$ is also zero. 
 
The term (\ref{balanceR}) requires a lengthier argument. On the event $j^* \neq 4 \beta + 3$, we have $\PEmpirical^1(\hat{Q} - \bar{Q}_{j^*}) Z(1/\bar{e}^* - 1/\hat{e}(X)) = \PEmpirical^1 R Z(1/\bar{e}^* - 1/\hat{e}(X))$, which the following calculation shows is identically zero when $j^* \neq 4 \beta + 3$:
\begin{align*}
\PEmpirical^1 R Z(1/\bar{e}^* - 1/\hat{e}) &=_{i} (1 - \alpha) \frac{1}{n_1} \bigg( \sum_{i \in \I_{j^*}} \hat{W}_i R_i (1 - \theta_i) - \lambda_1 \sum_{i \in \I_{j^*}} \hat{W}_i R_i \mathbb{I} \{ R_i \geq 0 \} \bigg) \\
&=_{ii} (1 - \alpha) \frac{1}{n_1} \bigg( \sum_{i \in \I_{j^*}} \hat{W}_i R_i (1 - \theta_i) - \sum_{i \in \I_{j^*}} \hat{W}_iR_i(1 - \theta_i) \times \frac{\sum_{i \in \I_{j^*}} \hat{W}_i R_i \mathbb{I} \{ R_i \geq 0 \}}{\tfrac{1}{2} \sum_{i \in \I_{j^*}} \hat{W}_i |R_i|} \bigg)\\
&=_{iii} 0
\end{align*}
Step $i$ follows since $\sum_{i \in \I_{j^*}} \hat{W}_i R_i = 0$ on the event $\{ j^* \neq 4 \beta + 3 \}$, step $ii$ substitutes in the definition of $\lambda_1$, and step $iii$ exploits the identity $\sum_{i \in \I_{j^*}} \hat{W}_i R_i \mathbb{I} \{ R_i \geq 0 \} = \tfrac{1}{2} \sum_{i \in \I_{j^*}} \hat{W}_i |R_i|$:
\begin{align*}
\frac{1}{2} \sum_{i \in \I_{j^*}} \hat{W}_i |R_i| &= \frac{1}{2} \sum_{i \in \I_{j^*}} \hat{W}_i R_i \mathbb{I} \{ R_i \geq 0 \} + \frac{1}{2} \sum_{i \in \I_{j^*}} \hat{W}_i (-R_i) \mathbb{I} \{ R_i < 0 \}\\
&= \frac{1}{2} \sum_{i \in \I_{j^*}} \hat{W}_i R_i \mathbb{I} \{ R_i \geq 0 \} + \frac{1}{2} \bigg( \sum_{i \in \I_{j^*}} \hat{W}_i R_i - \sum_{i \in \I_{j^*}} \hat{W}_i R_i \mathbb{I} \{ R_i < 0 \} \bigg)\\
&= \sum_{i \in \I_{j^*}} \hat{W}_i R_i \mathbb{I} \{ R_i \geq 0 \}
\end{align*}
Thus, $(\ref{balanceR}) = 0$. 

The final term $(\ref{all_equal})$ is more subtle. For any $i, j \in \I_{4 \beta + 3}$, $| \hat{Q}_i - \bar{Q}_{4 \beta + 2} |, | \hat{Q}_j - \bar{Q}_{4 \beta + 2}| \leq 2^{(4 \beta + 2)} n^{-(\beta + 1/4)}$, so $| \hat{Q}_i - \hat{Q}_j| \precsim n^{-(\beta + 1/4)}$.  However, by Condition \ref{condition:nonlinear_regularity}, all distinct values of $\hat{Q}_i$ are separated by distance $n^{-\beta}$ with probability approaching one.  Thus, with high probability, all values of $\hat{Q}_i$ in $\I_{4 \beta + 3}$ are identical to a constant $\hat{Q}_0$.  In that case $(\ref{all_equal}) = \hat{Q}_0 \times \PEmpirical^1 z(1/\bar{e}^* - 1/\hat{e}(X)) = \hat{Q}_0 \times 0$.  

Combining these various cases yields the conclusion $\PEmpirical^1 \hat{Q} Z(1/\bar{e}^* - 1/\hat{e}(X)) = 0$ with probability tending to one.  Thus, $\bar{e}^*$ is (with high probability) feasible in (\ref{fold_k_optimization}).

Next, we check that $\bar{\psi}^1$ converges to $\psi_\T^+$.  

The first step in this consistency calculation is to prove that $\lambda_1 = o_P(1)$ and $\lambda_2 = o_P(1)$.  Conditional on $\{ (X_i, Z_i) \}_{i \leq N}$ and $\hat{Q}_{-k}$, the only randomness remaining in $\lambda_1$ comes from the $\theta_i$ values.  For observations $i$ with $Z_i = 1$, $\theta_i$ takes on the value $\Lambda^{-1}$ with probability $\tau$ and $\Lambda$ with probability $1 - \tau$.  Since $\tau = \Lambda/(\Lambda + 1)$, simple algebra gives $\E[ (1 - \theta_i) | Z_i = 1, X_i] = 0$.  Hence, $\E[ \lambda_1 | \mathcal{G} ] = 0$ where $\mathcal{G} = \sigma( \{ (X_i, Z_i) \}_{i \leq N}, \hat{Q}_{-k})$.  Chebyshev's inequality implies $\lambda_1 = \mathcal{O}_P(\sqrt{\Var(\lambda_1 | \mathcal{G})})$, so it suffices to show the conditional variance of $\lambda_1$ vanishes.  Note that $\Var( \theta_i | \mathcal{G}) \leq c(\Lambda)$ for some constant $c(\Lambda)$, and $1 - \theta_i$ is (conditionally) independent of $1 - \theta_j$ when $i \neq j$.  Therefore, we may write:
\begin{align}
\Var( \lambda_1 | \mathcal{G}) &\precsim \frac{ \sum_{i \in \I_{j^*}} (\hat{W}_i R_i)^2}{( \sum_{i \in \I_{j^*}} \hat{W}_i |R_i|)^2}  \mathbb{I} \{ j^* \neq 4 \beta + 3 \} + \frac{ \sum_{i \in \I_{j^*}} (\hat{W}_i R_i)^2}{( \sum_{i \in \I_{j^*}} \hat{W}_i |R_i|)^2} \mathbb{I} \{ j^* = 4 \beta + 3 \}.
\label{variance_upper_bound}
\end{align}
Without loss of generality, assume that the exponent $\alpha$ in Condition \ref{condition:nonlinear_regularity} is zero.  This can always be achieved by rescaling $\hat{Q}_i$ by $n^{-\alpha}$ and making a corresponding change to the lower bound $\beta$.  Hence:
\begin{align*}
\frac{ \sum_{i \in \I_{j^*}} (\hat{W}_i R_i)^2}{( \sum_{i \in \I_{j^*}} \hat{W}_i |R_i|)^2}  \mathbb{I} \{ j^* \neq 4 \beta + 3 \} &\leq_i \frac{ \sum_{i \in \I_{j^*}} (\hat{W}_i R_i)^2}{(\sum_{i \in \I_{j^*} \backslash \I_{j^* = 1}} \hat{W}_i |R_i|)^2}  \mathbb{I} \{ j^* \neq 4 \beta + 3 \}\\
&\leq_{ii} \frac{\sum_{i \in \I_{j^*}} \hat{W}_i^2 R_i^2}{(\sum_{i \in \I_{j^*} \backslash \I_{j^* + 1}} \hat{W}_i 2^{j^*} n^{-j^* /4})^2}  \mathbb{I} \{ j^* \neq 4 \beta + 3 \}\\
&\leq_{iii} \frac{\sum_{i \in \I_{j^*}} \hat{W}_i^2 (2^{j^*} n^{-(j^* - 1)/4})^2}{(\sum_{i \in \I_{j^*} \backslash \I_{j^* + 1}} \hat{W}_i 2^{j^*} n^{-j^* / 4})^2} \mathbb{I} \{ j^* \neq 4 \beta + 3 \}\\
&\leq_{iv} n^{1/2} \times \frac{\sum_{i \in \I_{j^*}} \hat{W}_i^2}{( n_1^{-1/8} \sum_{i \in \I_{j^*}} \hat{W}_i)^2} \mathbb{I} \{ j^* \neq 4 \beta + 3 \}\\
&\precsim_{v} \frac{n^{3/4}}{\sum_{i \in \I_{j^*}} \hat{W}_i} \mathbb{I} \{ j^* \neq 4 \beta + 3 \} \\
& =_{vi} \mathcal{O}_P(n^{-1/4})
\end{align*}
Step $i$ makes the denominator smaller by removing positive terms.  Step $ii$ is justified because, on the event $j^* \neq 4 \beta + 3$, $|R_i| \geq 2^{j^*} n^{-j^*/4}$ for all $i \in \I_{j^*} \backslash \I_{j^* + 1}$ by (\ref{Ij_definition}).  Step $iii$ requires some more justification.  If $j^* = 1$, then $R_i = | \hat{Q}_i - \bar{Q}_1 | \leq 2 \max_i | \hat{Q}_i | \leq 2$.  If $1 < j^* \neq 4 \beta + 3$, then $|R_i| = | \bar{Q}_i - \bar{Q}_{j^*} | \leq | \bar{Q}_i - \bar{Q}_{j^* - 1}| + | \bar{Q}_{j^* - 1} + \bar{Q}_{j^*}| \leq 2^{j^*} n^{-(j^* - 1)/4}$.  In either case, $|R_i| \leq 2^{j^*} n^{-(j^* - 1)/4}$.  Step $iv$ rearranges and invokes the definition of $j^*$, while step $v$ uses the fact that $n_1 \leq n$ and our trimming assumption on $\hat{e}_i$ ensures the ratio of $\hat{W}_i / \hat{W}_i^2$ is bounded above and below when $Z_i \neq 1$.  Step $vi$ holds by (\ref{weight_distribution}).

The second term (\ref{variance_upper_bound}) can be controlled by a similar calculation. In fact, it is easier since $R_i = 1$ on the event $j^* = 4 \beta + 3$. That omitted calculation shows that $\Var(\lambda_1 | \mathcal{G}) = o_P(1)$, and hence $\lambda_1 = o_P(1)$.

To show $\lambda_2 = o_P(1)$, start by writing $\lambda_2$ as the difference of two terms. 
\begin{align*}
\lambda_2 &= \frac{\sum_{i \in \I_{j^*}} \hat{W}_i (1 - \theta_i)}{\sum_{i \in \I_{j^*}} \hat{W}_i} - \lambda_1 \frac{\sum_{i \in \I_{j^*}} \hat{W}_i \mathbb{I} \{ R_i \geq 0 \}}{\sum_{i \in \I_{j^*}} \hat{W}_i} 
\end{align*}
The first term is $o_P(1)$ by the same argument as the one for $\lambda_1$ when $j^* = 4 \beta + 3$.  The second term is the product of $\lambda_1$ and a quantity less than one.  Since $\lambda_1 = o_P(1)$, this shows the second term is $o_P(1)$ as well.

Finally, we ready to show that $\bar{\psi}^1 = \psi_{\T}^+ - o_P(1)$.  By (\ref{oracle_consistency}), it suffices to show the distance between $\PEmpirical^1 YZ/\bar{e}^*$ and $\PEmpirical^1 YZ(1 + \theta_i \tfrac{1 - \hat{e}(X_i)}{\hat{e}(X_i)})$ is vanishing.  We expand this difference as the sum of several terms:
\begin{align}
\PEmpirical^1 YZ/\bar{e}^* - \PEmpirical^1 YZ(1 + \theta_i \tfrac{1 - \hat{e}(X)}{\hat{e}(X)}) &=  \frac{1}{n_1} \sum_{i \not \in \I_{j^*}} \hat{W}_i Y_i (1 - \theta_i) \label{jstar_complement}\\
&+ \alpha \frac{1}{n_1} \sum_{i \in \I_{j^*}} \hat{W}_i Y_i (1 - \theta_i) \label{alpha_term}\\
&+ (1 - \alpha) \frac{1}{n_1} \sum_{i \in \I_{j^*}} \hat{W}_i Y_i ( \lambda_1 \mathbb{I} \{ R_i \geq 0 \} + \lambda_2) \label{one_minus_alpha_term}.
\end{align}
The term (\ref{jstar_complement}) can be handled as follows:
\begin{align*}
\left| \frac{1}{n_1} \sum_{i \not \in \I_{j^*}} \hat{W}_i Y_i(\theta_i - 1) \right| &\precsim \left( \frac{1}{n_1} \sum_{i \not \in \I_{j^*}} \hat{W}_i^2 \right)^{1/2} \left( \frac{1}{n_1} \sum_{i = 1}^{n_1} |Y_i|^2 \right)^{1/2}\\
&\precsim \left( \frac{1}{n_1} \sum_{i \not \in \I_{j^*}} \hat{W}_i \right)^{1/2} ( \E[Y^2] + o_P(1))\\
&= o_P(1)
\end{align*}
where we have used (\ref{weight_distribution}) in the final step.  To analyze (\ref{alpha_term}), use the fact that $| \lambda_1| \vee | \lambda_2| = o_P(1)$, and hence $\alpha = o_P(1)$.  Since $\tfrac{1}{n_1} \sum_{i = 1}^{n_1} \hat{W}_i |Y_i| = \mathcal{O}_P(1)$, the product vanishes.  Finally, (\ref{one_minus_alpha_term}) is smaller than $\mathcal{O}_P(1) \times \tfrac{1}{n_1} \sum_{i = 1}^{n_1} \hat{W}_i |Y_i| (| \lambda_1| + | \lambda_2|) = o_P(1)$.  

Putting it all together, we have shown $\PEmpirical^1 YZ/\bar{e}^* - \PEmpirical^1 YZ(1 + \theta_i \tfrac{1 - \hat{e}(X)}{\hat{e}(X)}) = o_P(1)$, and hence $\hat{\psi}_+(1) \geq \bar{\psi}^1 = \psi_{\T}^+ - o_P(1)$.
\end{proof}

\subsection{Proof of Theorem \ref{theorem:inference}} \label{proof:theorem:inference}

In this section, we prove Theorem \ref{theorem:inference}.  For brevity, we only prove the validity of the bootstrap upper bound for $\psi_{\T}^+$, and restrict our attention to the case where the nominal propensity score is estimated by logistic regression.  By symmetry, the result extends to $\psi_{\T}^-$ and the other estimands of interest, and the proof can easily be modified to handle other parametric propensity models like probit regression.  As in the proof of Theorem \ref{theorem:sharpness}\ref{linear}, we abbreviate $\hat{Q}_{\tau}(x, 1)$ and $Q_{\tau}(x, 1)$ by $\hat{Q}(x)$ and $Q(x)$, respectively, and results for $K$-fold cross-fit linear quantile estimates hold by viewing the folds as random and interacting the features with the fold identities to produce features in $\R^{k \times K}$.  

For convenience, we restate the theorem in this special case to make the regularity conditions more precise.

\begin{manualtheorem}{\ref*{theorem:inference}\ref*{linear}}\label{theoremAlt:inference:linear}
\textup{\textbf{(Inference for $\psi_{\T}^+$)}}\\
Assume Conditions \ref{condition:ipw_conditions}, \ref{condition:density}, and \ref{condition:quantile_estimates}.\ref{linear}.  Suppose that the nominal propensity score $\hat{e}$ is consistently estimated by logistic regression, and the covariate space $\mathcal{X}$ is bounded.\footnote{This is needed for logistic regression to be compatible with the strong overlap requirement of Condition \ref{condition:ipw_conditions}, although examining the proof shows it could be relaxed to the existence of certain exponential moments as in \cite{zsb2019}, Assumption C.1(3).}  Suppose the number of bootstrap samples $B \equiv B_n$ tends to infinity.  Then we have:
\begin{align*}
\liminf_{n \rightarrow \infty} \Pprob( \psi_{\T}^+ \leq Q_{1 - \alpha}( \{ \hat{\psi}_b^+ \}_{b \in [B]}) \geq 1 - \alpha
\end{align*}
for all $\alpha \in (0, 1)$.
\end{manualtheorem}

\begin{proof}
We begin by introducing some notation.  For $i \leq n$, let $(X_i^*, Y_i^*, Z_i^*) \sim \mathbb{P}_n$ be bootstrap observations, and let $\PBoot^* = \tfrac{1}{n} \sum_{i = 1}^n \delta_{(X_i^*, Y_i^*, Z_i^*)}$ denote the bootstrap empirical distribution.  Let $\hat{\theta}^*$ be the logistic regression coefficient vector estimated on the bootstrap dataset, and set $\hat{e}^*(x) = 1/[1 + \exp(-x^{\top} \hat{\theta}^*)]$.  Further define the bootstrap ZSB constraint set $\mathcal{E}_n^*(\Lambda)$ by:
\begin{align*}
\mathcal{E}_n^*(\Lambda) &= \left\{ \bar{e} \in \R^n \, : \, \Lambda^{-1} \leq \frac{\bar{e}_i/[1 - \bar{e}_i]}{\hat{e}^*(X_i^*)/[1 - \hat{e}^*(X_i^*)]} \leq \Lambda \text{ for all } i \leq n \right\}
\end{align*}
and the bootstrap quantile balancing estimator $\hat{\psi}_*^+$ by:
\begin{align*}
\hat{\psi}_*^+ &= \max_{\bar{e} \in \mathcal{E}_n^*(\Lambda)} \frac{\sum_{i = 1}^n Y_i Z_i / \bar{e}_i}{\sum_{i = 1}^n Z_i / \bar{e}_i} \quad \text{s.t.} \quad \binom{\PBoot^* \hat{Q}(X) Z / \bar{e}}{\PBoot^* Z / \bar{e}} = \binom{\PBoot^* \hat{Q}(X) Z / \hat{e}^*(X)}{\PBoot^* Z / \hat{e}^*(X)}.
\end{align*}
The estimated quantile $\hat{Q}$ in the definition of $\hat{\psi}^+_*$ comes from the original dataset, but the rest of the argument will go through even if it is re-estimated within each bootstrap sample.

The first step of the proof is to reduce our task to that of proving bootstrap consistency for a much simpler estimator under the assumption that $Q(x) = \beta_0^{\top} h(x)$.  Define the bootstrap \textit{feature balancing} estimator $\bar{\psi}^+_*$ by:
\begin{align*}
\bar{\psi}^+_* &= \max_{\bar{e} \in \mathcal{E}_n^*(\Lambda)} \frac{\sum_{i = 1}^n Y_i Z_i / \bar{e}_i}{\sum_{i = 1}^n Z_i / \bar{e}_i} \quad \text{s.t.} \quad \PBoot^* h(X) Z / \bar{e} = \PBoot^* h(X) Z / \hat{e}^*(X).
\end{align*}
Adding constraints to the balancing problem reduces the objective, so $\hat{\psi}^+_* \geq \bar{\psi}^+_*$ deterministically and the quantiles of the bootstrap distribution of $\hat{\psi}^+_*$ are above the quantiles of the bootstrap distribution of $\bar{\psi}^+_*$.  A further reduction can be obtained by defining the estimator $\mathring{\psi}_*^+$ by:
\begin{align*}
\mathring{\psi}_*^+ &= \frac{\PBoot^*(Y - \hat{\gamma}^{* \top} h(X)) Z (1 + \Lambda^{\text{sign}(Y - Q(X))} (1 - \hat{e}^*(X))/\hat{e}^*(X)) + \PBoot^* \hat{\gamma}^{* \top} h(X) Z / \hat{e}^*(X)}{\PBoot^* Z / \hat{e}^*(X)}\\
\hat{\gamma}^* &= \argmin_{\gamma \in \R^k} \PBoot^* \rho_{\tau}(Y - \gamma^{\top} h(X)) Z \tfrac{1 - \hat{e}^*(X)}{\hat{e}^*(X)}.
\end{align*}
This estimator is not actually implementable as it depends on the true quantile $Q$ through the term $\text{sign}(Y - Q(X))$. Still, the proof of Lemma \ref{lemma:lower_bound} implies $\bar{\psi}_*^+ \geq \mathring{\psi}_*^+$, so it suffices to prove the validity of the percentile bootstrap for the estimator $\mathring{\psi}^+_*$.  

The rest of this proof will be dedicated to proving the validity of the percentile bootstrap for the estimator $\mathring{\psi}_*^+$.  Let $\theta_0$ be the true logistic regression coefficient vector. For any $\theta \in \R^d, \beta \in \R^k, \psi \in \R$, define the estimating equation $m_{\theta, \beta, \psi}(x, y, z)$ by:
\begin{align*}
m_{\theta, \gamma, \psi}(x, y, z) &= 
\left[
\begin{array}{c}
x(z - 1/(1 + e^{-\theta^{\top} x} ))\\
h(x)(\tau - \mathbb{I} \{ \gamma^{\top} h(x) < 0 \}) z e^{\theta^{\top} x} \\
(y - \gamma^{\top} h(x)) z(1 + \Lambda^{\text{sign}(y - Q(x))} e^{\theta^{\top} x} + \gamma^{\top} h(x) z (1 + e^{-\theta^{\top} x}) - \psi z(1 + e^{-\theta^{\top} x})
\end{array}
\right].
\end{align*}
and define $M(\theta, \gamma, \psi) = \PSample m_{\theta, \gamma, \psi}(X, Y, Z)$.  If the linear quantile model is correctly specified (i.e. $Q(x) = \beta_0^{\top} h(x)$ for some $\beta_0 \in \R^d$), then $(\theta_0, \beta_0, \psi_{\T}^+)$ solve the estimating equation $M(\theta_0, \gamma_0, \psi_{\T}^+) = 0$.  Meanwhile, the estimators $(\hat{\theta}^*, \hat{\gamma}^*, \mathring{\psi}_*^+)$ (approximately) solve the bootstrap estimating equation:
\begin{align*}
M_n^*( \hat{\theta}^*, \hat{\gamma}^*, \mathring{\psi}_*^+) &= \PBoot^* m_{\hat{\theta}^*, \hat{\gamma}^*, \mathring{\psi}_*^+}(X, Y, Z) = o_P(n^{-1/2}).
\end{align*}
Therefore, we are in a position to apply the standard theory of bootstrap Z-estimators, at least in the correctly-specified case $Q(x) = \beta_0^{\top} h(x)$. 

Specifically, we will apply Theorem 10.6 in \cite{kosorok2008introduction}, but prove bootstrap consistency by more direct means.  Since logistic regression and weighted quantile regression are both convex optimization problems, the consistency $\hat{\theta}^* \xrightarrow{p} \theta_0$ and $\hat{\gamma}^* \xrightarrow{p} \beta_0$ follow from the bootstrap law of large numbers and Theorem 2.7 in \cite{newey_mcfadden}.  From this, the result $\mathring{\psi}_*^+ \xrightarrow{p} \psi_{\T}^+$ follows from the same argument used in the proof of Theorem \ref{theorem:sharpness}.\ref*{linear}, with all applications of the law of large numbers replaced by the bootstrap law of large numbers.  It remains to check Assumption (C) in Theorem 10.6 of \cite{kosorok2008introduction}).  Exercise 10.5.5 in \cite{kosorok2008introduction} verifies this for the logistic regression estimating equation $x(z - 1/(1 + e^{-\theta^{\top} x}))$.  The quantile regression estimating equation eventually lives in the product of the VC class $\mathcal{F}$ and the smooth parametric class $\mathcal{G}$:
\begin{align*}
\mathcal{F} &= \{ (x, y, z) \mapsto (\tau - \mathbb{I} \{ \gamma^{\top} h(x) < 0 \} \, : \, \gamma \in \R^d, \}\\
\mathcal{G} &= \{ (x, y, z) \mapsto h(x) z e^{\theta^{\top} x} \, : \, || \theta - \theta_0 || \leq 1 \}.
\end{align*}
Therefore, Assumption (C) follows from Theorem 9.15 in \cite{kosorok2008introduction} and the dominated convergence theorem.  The same arguments verify this condition for the final estimating equation.  

Thus, we have shown that if $Q(x) = \beta_0^{\top} h(x)$ for some $\beta_0 \in \R^d$, then all the requirements for the proof of Theorem 10.16 in \cite{kosorok2008introduction} are satisfied, and hence the percentile bootstrap based on $\mathring{\psi}_*^+$ will be asymptotically valid. 

Finally, it remains to remove the assumption that the linear quantile model is correctly specified.  If $Q(x) \neq \beta^{\top} h(x)$ for any $\beta \in \R^d$, then we may once again lower bound $\hat{\psi}_*^+$ by the estimator that balances $h(x)$ \textit{and} the true quantile $Q(x)$.  This brings us back to the well-specified case, and the preceding arguments imply the validity of the bootstrap upper confidence bound.
\end{proof}


\end{document}